\theoremstyle{plain}
\newtheorem{thm}{Theorem}[section]
\newtheorem{cor}[thm]{Corollary}
\newtheorem{prop}[thm]{Proposition}
\newtheorem{lem}[thm]{Lemma}
\theoremstyle{definition}
\newtheoremstyle{myremark}% name
  {5pt}%      Space above
  {5pt}%      Space below
  {\sffamily}%         Body font
  {10pt}%         Indent amount (empty = no indent, \parindent = para indent)
  {\sffamily}% Thm head font
  {:}%        Punctuation after thm head
  {.5em}%     Space after thm head: " " = normal interword space;
\theoremstyle{myremark}
\newtheorem*{remark}{Remark}
\def\R{\mathbb{R}}
\newcommand{\be}{\begin{equation}}
\newcommand{\ee}{\end{equation}}
\newcommand{\bea}{\begin{eqnarray}}
\newcommand{\eea}{\end{eqnarray}}
\newcommand{\beann}{\begin{eqnarray*}}
\newcommand{\eeann}{\end{eqnarray*}}
\newcommand{\benn}{\begin{equation*}}
\newcommand{\eenn}{\end{equation*}}
\def\ra{\rightarrow}
\def\I{\infty}
\def\I{\infty}
\newcommand{\cB}{{\mathcal B}}  % calligraphic B
\newcommand{\cC}{{\mathcal C}}  % calligraphic C
\newcommand{\cD}{{\mathcal D}}  % calligraphic D
\newcommand{\cE}{{\mathcal E}}  % calligraphic E
\newcommand{\cL}{{\mathcal L}}  % calligraphic L
\newcommand{\cM}{{\mathcal M}}  % calligraphic M
\newcommand{\cN}{{\mathcal N}}  % calligraphic N
\newcommand{\cO}{{\mathcal O}}  % calligraphic O
\newcommand{\cS}{{\mathcal S}}  % calligraphic S
\newcommand{\cU}{{\mathcal U}}  % calligraphic U
\newcommand{\cV}{{\mathcal V}}  % calligraphic V
\begin{document}

\author{Christian Kuehn\footnotemark[1]~ and 
Peter Szmolyan\footnotemark[1]}

\renewcommand{\thefootnote}{\fnsymbol{footnote}}
\footnotetext[1]{%
Institute for Analysis and Scientific Computing, 
Vienna University of Technology, 
Vienna, 1040, Austria.} 
\renewcommand{\thefootnote}{\arabic{footnote}}
 
\title{Multiscale Geometry of the Olsen Model and\\ Non-Classical Relaxation Oscillations}

\maketitle

\begin{abstract}
We study the Olsen model for the peroxidase-oxidase reaction. The dynamics is
analyzed using a geometric decomposition based upon multiple time scales. The
Olsen model is four-dimensional, not in a standard form required by geometric
singular perturbation theory and contains multiple small parameters. These three 
obstacles are the main challenges we resolve by our analysis. Scaling and the
blow-up method are used to identify several subsystems. The results presented
here provide a rigorous analysis for two oscillatory modes. In particular, we 
prove the existence of non-classical relaxation oscillations in two cases. The analysis
is based upon desingularization of lines of transcritical and submanifolds
of fold singularities in combination with an integrable relaxation phase. 
In this context our analysis also explains an assumption that has been utilized, 
based purely on numerical reasoning, in a previous bifurcation analysis by 
Desroches, Krauskopf and Osinga 
[{Discret.}~{Contin.}~{Dyn.}~{Syst.}~S, 2(4), p.807--827, 2009]. Furthermore, 
the geometric decomposition we develop forms the basis to prove 
the existence of mixed-mode and chaotic oscillations in the Olsen model, which
will be discussed in more detail in future work. 
\end{abstract}

{\bf Keywords:} Olsen model, multiple time scales, relaxation oscillation, geometric
singular perturbation theory, blow-up method, transcritical singularity, fold
singularity, center manifolds, bifurcation delay.

%\tableofcontents

\section{Introduction \& Review}  
\label{sec:intro}

Experimental observation of oscillatory dynamics \cite{OlsenDegn} in the 
peroxidase-oxidase (PO) reaction
\be
\label{eq:reaction}
2~NADH+2~H^++O_2\ra2~NAD^+ + 2~H_2O
\ee
led to further interest in the dynamical mechanisms \cite{DegnOlsenPerram}. 
Various models have been proposed \cite{OlsonWilliksenScheeline,AgudaLarter,LarterHemkin}
to capture the dynamics of \eqref{eq:reaction}. We are going to study a model
for the PO reaction initially proposed by Degn, Olsen and Perram
\cite{DegnOlsenPerram} (DOP). The four ordinary differential equations (ODEs),
as considered by Olsen \cite{Olsen}, are 
\be
\label{eq:Olsen}
\begin{array}{lcl}
\frac{dA}{dT}&=& -k_3ABY+k_7-k_{-7}A,\\
\frac{dB}{dT}&=& -k_3ABY-k_1BX+k_8,\\
\frac{dX}{dT}&=& k_1BX-2k_2X^2+3k_3ABY-k_4X+k_6,\\
\frac{dY}{dT}&=& -k_3ABY+2k_2X^2-k_5Y,\\
\end{array}
\ee 
where $(A,B,X,Y)\in(\R^4)^+_0=\{(A,B,X,Y)\in\R^4:A\geq 0,B\geq 0,X\geq 0,Y\geq 0\}$
are chemical concentrations and $k_i>0$ are parameters. $A$ and $B$ denote
concentrations of the substrates $NADH$ and $O_2$ while $X$ and $Y$ are
concentrations for two free radicals. We refer to \eqref{eq:Olsen} as the
Olsen model.\medskip 

We briefly describe numerical integration results for the standard parameter
values \cite{Olsen}; see Table \ref{tab:tab1}. Olsen used $k_1$ as a bifurcation
parameter and found three main distinct regimes consisting of mixed-mode oscillations
(MMOs), chaos and relaxation-type periodic oscillations; see Figure \ref{fig:fig1}.
We are going to use the values in Table \ref{tab:tab1} as the main reference parameter
set, where $k_1$ can take three different values. In this paper, we are primarily
interested in periodic oscillations, similar to the results shown in Figure
\ref{fig:fig1}(c) for $k_1=0.41$. However, we shall already indicate how this regime
differs from the other two from the geometric singular perturbation theory (GSPT) 
viewpoint. Our results on the reduction and the existence of periodic orbits of
\eqref{eq:Olsen} are stated in Section \ref{sec:tr_res}.\medskip 

\begin{table}[htbp]
\centering
\begin{tabular}{|c|c|c|c|c|c|c|c|c|}
\hline
$k_1$ &  $k_2$ & $k_3$   & $k_4$ & $k_5$   & $k_6$     & $k_7$ & $k_{-7}$ & $k_8$ \\
\hline 
$0.16,0.35,0.41$ & $250$ & $0.035$ & $20$  & $5.35$  & $10^{-5}$ & $0.8$ & $0.1$    & $0.825$\\
\hline
\end{tabular}
\caption{\label{tab:tab1}Standard parameter values for the Olsen model \eqref{eq:Olsen}.}
\end{table}

\begin{figure}[htbp]
\psfrag{A}{$A$}
\psfrag{t}{$T$}
\psfrag{c}{\scriptsize{(c)}}
\psfrag{a}{\scriptsize{(a)}}
\psfrag{b}{\scriptsize{(b)}}
	\centering
		\includegraphics[width=1\textwidth]{./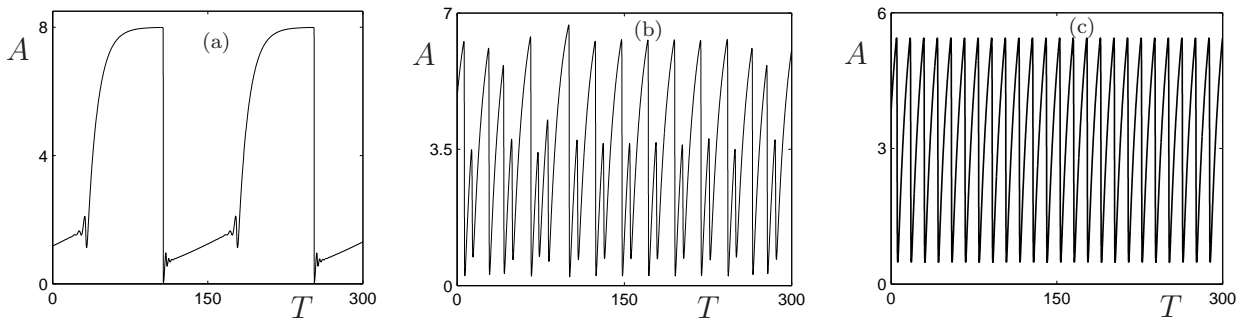}
	\caption{\label{fig:fig1}Numerical simulation for \eqref{eq:Olsen} with 
	parameter values in Table \ref{tab:tab1} upon varying $k_1$. (a) MMOs 
	for $k_1=0.16$, (b) chaotic/aperiodic oscillations for $k_1=0.35$ and 
	(c) regular periodic oscillations for $k_1=0.41$.}
\end{figure}

We briefly review previous work on the peroxidase-oxidase reaction as well as
the mathematical techniques we use for our analysis. An important starting point
are the numerical simulations by Olsen \cite{Olsen} showing that \eqref{eq:Olsen}
can exhibit various types of oscillations depending on the choice of parameters.
Bifurcations of equilibria and sequences of MMOs are investigated in \cite{LarterBushLonisAguda}.
Considering the chemical reaction mechanisms, it was already realized by Aguda, Larter
and Clarke \cite{AgudaLarterClarke} - based on chemical considerations - that the Olsen model
can probably be best understood by decomposition into smaller subsystems. Then various
routes to chaos were proposed ranging from torus break-up \cite{LarterSteinmetz,SteinmetzLarter},
more detailed reaction mechanisms \cite{AgudaLarter} to classical period-doubling scenarios
\cite{SteinmetzGeestLarter}. Subsequently, MMOs, period-doubling sequences and chaotic dynamics
were observed in experiments \cite{HauckSchneider,HauckSchneider1,GeestSteinmetzLarterOlsen}.
Analysis of Lyapunov exponents and period-doubling bifurcations in experimental and
numerical simulation time series provided very strong evidence that chaotic dynamics
occurs \cite{GeestOlsenSteinmetzLarterSchaffer,SteinmetzGeestLarter}. Thompson and Larter
realized the crucial role of multiple time scales in the Olsen model and suggested that
a fast-slow variable decomposition is important to understand the oscillations
\cite{ThompsonLarter}. Multiple time scale structures were also investigated in more
detailed models of the PO reaction. For example, it was conjectured in \cite{HauserOlsen}
that slow manifolds and homoclinic orbits play an important role; Hopf bifurcations 
\cite{BronnikovaFedkinaSchafferOlsen,HauserOlsenBronnikovaSchaffer} and bursting
oscillations were found as well \cite{SchafferBronnikovaOlsen,BronnikovaSchafferOlsen} using
numerical simulation. Recently, a detailed numerical continuation parameter study was carried
out by Desroches, Krauskopf and Osinga \cite{DesrochesKrauskopfOsinga1} who computed various
patterns of periodic orbits, MMOs, chaotic dynamics as well as slow manifolds.\medskip 

Although the reaction-kinetics and the detail of modelling required are still not quite
clear \cite{Scheelineetal} it is evident from the results on the Olsen model that
oscillations, multiple time scales and decomposition are key aspects. However, the
Olsen model has resisted rigorous mathematical analysis for over thirty years, despite it
being a key motivating example to study multiple time scale dynamics \cite{Milik}. In this paper, we provide 
a first detailed GSPT analysis of the Olsen model to understand the geometry of oscillations. 
In fact, our analysis also explains why one has not been able to carry out rigorous geometric
dissection previously. We establish existence results for several special
types of periodic solutions. Furthermore, our analysis provides the basis to prove the existence
of chaos generating mechanisms, which will be detailed elsewhere.\medskip 

The analysis of periodic orbits in multiple time scales systems has an interesting history. 
The seminal work of van der Pol on relaxation oscillations \cite{vanderPol,vanderPol1} is one
of the main starting points for the interest in fast-slow oscillatory systems. The discovery
of ``chaotic relaxation oscillations'' \cite{CartwrightLittlewood,CartwrightLittlewood1} as
well as canard periodic orbits \cite{BenoitCallotDienerDiener,Diener} showed that highly
complicated dynamics can be obtained from rather simple polynomial fast-slow vector fields.
Further analysis revealed that multiple time scale mechanisms can also account for oscillations
with special patterns such as MMOs \cite{Barkley} and bursting \cite{Rinzel}. For recent
reviews on these topics see \cite{Desrochesetal,Izhikevich}. It is important to note that often
the analysis has been carried out in systems with suitably minimal dimension, where local
normal form theory applies, which have a global separation of time scales and which exhibit
a return mechanism similar to the original van der Pol system via S-shaped critical manifolds
\cite{Grasman,MKKR,KuehnRetMaps}. Although several exceptions of this framework have been
considered, for example the general analysis of folded singularities \cite{Wechselberger1},
fast-slow systems in non-standard form \cite{KosiukSzmolyan}, systems with three time scales
\cite{KrupaPopovicKopell} and higher-dimensional systems
\cite{KrupaVidalDesrochesClement,Harveyetal} arising in applications, we are still quite far
away from understanding high-dimensional multiple time scale dynamics in general systems. The
main obstacle for the analysis of the Olsen model is that all problems occur
simultaneously. It is four-dimensional, in nonstandard form, contains several non-folded
degenerate singularities, has three natural small parameters and a return mechanism without an 
S-shaped manifold. It even has multiple regimes of different geometric multiple time scale
decompositions due to the relative asymptotic limits of the small parameters. In this paper, we 
address most of these issues, which are then used to prove the existence of certain periodic 
solutions.\medskip   

The main tools we use to analyze the Olsen model are GSPT, desingularization via the blow-up
method and bifurcation theory in combination with standard techniques from dynamical systems
such as local center manifolds and stability theory. Geometric theory for normally hyperbolic
fast-slow systems was initially developed by Tikhonov~\cite{Tikhonov}, Fenichel~\cite{Fenichel4}
and various other groups~\cite{HirschPughShub}; for recent reviews see~\cite{Jones,Kaper}. For a brief
statement of the main result see Appendix~\ref{ap:fastslow}. 

The blow-up technique was introduced into fast-slow systems by the seminal work of Dumortier and 
Roussarie~\cite{DumortierRoussarie}. It has been used to analyze various local singularities 
such as fold points~\cite{KruSzm3}, folded nodes~\cite{SzmolyanWechselberger1}, Bogdanov-Takens points~\cite{Chiba1}, 
intersection points of slow dynamics~\cite{KruSzm4} and many others. It can also be used to
help to resolve global phenomena such as canard explosion~\cite{KruSzm2}, 
periodic orbits~\cite{GucwaSzmolyan} and homoclinic orbits~\cite{HuberSzmolyan}. Usually blow-up is used for
a distinguished small parameter $\epsilon$, but see~\cite{GucwaSzmolyan2}. Appendix~\ref{ap:blowup}
provides a brief review of the blow-up method.\medskip

The paper is structured as follows: In Section \ref{sec:tr_res} we use a rescaling to describe
a version of the Olsen model which is the starting point of our analysis. Furthermore, we state
our main results. Section \ref{sec:tr_res} ends with a geometric outline for the analysis to 
follow. In Section \ref{sec:main_bu} we employ the blow-up method to desingularize a submanifold of
fold singularities at which the transition between slow drift dynamics and fast large
loops takes place. Section \ref{sec:delay} is dedicated to a much finer analysis of the slow
drift dynamics in a scaling chart of the first blow-up while Section \ref{sec:loops} provides
the study of the fast large loops. In Section \ref{sec:candidate} we construct two classes of 
candidate (or singular limit) trajectories for certain open sets of parameters. In Section 
\ref{sec:retmap} all the previous results are combined to obtain the existence of two types of 
non-classical relaxation oscillations in the Olsen model. An outlook to other oscillatory 
patterns and chaotic dynamics, and their analysis via GSPT, is provided in Section 
\ref{sec:outlook}.

\section{Transformations and the Main Result}
\label{sec:tr_res}

The first step is to scale \eqref{eq:Olsen} to get a better understanding of the multiple 
time scale structure. We use a slight modification of a scaling suggested by Milik \cite{Milik}
\benn
A=\frac{k_1k_5}{k_3\sqrt{2k_2k_8}}a_2,\quad B=\frac{\sqrt{2k_2k_8}}{k_1}b_2,
\quad X=\frac{k_8}{2k_2}x_2,\quad Y=\frac{k_8}{k_5}y_2,\quad 
T=\frac{k_1k_5}{k_3k_8\sqrt{2k_2k_8}}s,
\eenn
which transforms the Olsen model into
\be
\label{eq:Olsen1}
\begin{array}{rcl}
\frac{da_2}{ds}&=& \mu-\alpha a_2 -a_2b_2y_2,\\
\frac{db_2}{ds}&=& \epsilon_b(1-b_2x_2-a_2b_2y_2),\\
\epsilon^2 \frac{dx_2}{ds}&=& b_2x_2-x_2^2 +3a_2b_2y_2-\xi x_2+\delta,\\
\epsilon^2 \frac{dy_2}{ds}&=&\kappa(x_2^2-y_2-a_2b_2y_2),\\
\end{array}
\ee
where $(a_2,b_2,x_2,y_2)\in(\R^4)^+_0$ and the new parameters are given by
\bea
&\mu=\frac{k_7}{k_8},\qquad \alpha=\frac{k_1k_5k_{-7}}{k_3k_8\sqrt{2k_2k_8}},
\qquad \epsilon_b=\frac{k_1^2k_5}{2k_2k_3k_8},
\qquad \kappa= \frac{\sqrt{2k_2k_8}}{k_5},\nonumber\\
&\epsilon^2=\frac{k_3k_8}{k_1k_5},\qquad \xi=\frac{k_4}{\sqrt{2k_2k_8}},\qquad 
\delta=\frac{k_6}{k_8}\label{eq:Milik_para}.
\eea
The reasoning for the choice of subscript for the phase space variables will become
apparent from the blow-up in Section \ref{sec:main_bu}.\medskip

\begin{table}[htbp]
\centering
\begin{tabular}{|c|c|c|c|c|c|c|c|}
\hline
           & $\mu$  &  $\alpha$ & $\epsilon_b$ & $\epsilon^2$ & $\xi$   & $\delta$           & $\kappa$ \\
\hline 
$k_1=0.16$ & $0.97$ & $0.15$    & $0.0095$     & $0.033$      & $0.98$  & $1.2\cdot 10^{-5}$ & $3.93$ \\
$k_1=0.35$ & $0.97$ & $0.32$    & $0.045$      & $0.015$      & $0.98$  & $1.2\cdot 10^{-5}$ & $3.93$ \\
$k_1=0.41$ & $0.97$ & $0.37$    & $0.062$      & $0.013$      & $0.98$  & $1.2\cdot 10^{-5}$ & $3.93$ \\
\hline
\end{tabular}
\caption{\label{tab:tab2}Standard parameter values for the Olsen model \eqref{eq:Olsen1}
obtained via the transformation \eqref{eq:Milik_para} from Table \ref{tab:tab1}; only
approximate values for two significant digits are given.}
\end{table}

The original parameter values by Olsen from Table \ref{tab:tab1} are converted into
the new parameters in Table \ref{tab:tab2}. The transformation already makes the multiple
time scale structure of the Olsen model more visible. It is very important to note from
Table \ref{tab:tab2} that varying $k_1$ changes the orders of magnitude for the small
parameters $\epsilon_b$ and $\epsilon$ as well as their relative size. This effect has to
be used in the mathematical analysis to distinguish different regimes; see also Section 
\ref{sec:outlook}.\medskip

The general strategy to understand the Olsen model, as shown for other multiple time scale systems 
{e.g.}~in \cite{GucwaSzmolyan}, will be to first resolve the fastest dynamics of 
\eqref{eq:Olsen1}. The fastest dynamics is visible using the rescaling
\be
\label{eq:prelim_scale}
a=a_2,\qquad  b=b_2,\qquad x=\epsilon x_2,\qquad y=\epsilon^2 y_2,\qquad \tau=\epsilon^{-2}s   
\ee
which, upon applying \eqref{eq:prelim_scale} to \eqref{eq:Olsen1}, yields
\be
\label{eq:Olsen2}
\begin{array}{rcl}
\frac{da}{d\tau}&=& \epsilon^2(\mu-\alpha a) -aby,\\
\frac{db}{d\tau}&=& \epsilon (\epsilon_b\epsilon-\epsilon_bbx)-\epsilon_baby,\\
\epsilon \frac{dx}{d\tau}&=& -x^2 +\epsilon (b-\xi) x +3aby +\epsilon^2 \delta,\\
\frac{dy}{d\tau}&=&\kappa(x^2-y-aby).\\
\end{array}
\ee
The two systems \eqref{eq:Olsen1} and \eqref{eq:Olsen2} are going to be two main 
components of our analysis. Notice that different regimes can exist depending upon
the (relative) size of the three natural small parameters $\epsilon$, $\epsilon_b$
and $\delta$. In fact, just viewing \eqref{eq:Olsen2} on a formal level, all the
different fast-slow possibilities for a four-dimensional system occur in Olsen model:

\begin{itemize}
 \item for \eqref{eq:Olsen1}, $\epsilon^2\ra 0$ yields two fast and two slow variables, 
 \item for \eqref{eq:Olsen1}, $\epsilon^2\neq 0$ and $\epsilon_b\ra 0$ yields three fast
 variables and one slow variable, 
 \item for \eqref{eq:Olsen2}, $\epsilon\ra 0$ and $\epsilon_b\neq 0$ yields one fast
 variable and three slow variables.
\end{itemize}

All the different regimes are relevant for the asymptotic analysis of oscillatory
dynamics in the Olsen model. In particular, three major regimes are relevant
\benn
\epsilon_b\ll \epsilon^2,\qquad \epsilon_b\approx \epsilon^2,\qquad \epsilon_b\gg \epsilon^2, 
\eenn
which roughly correspond to the three cases $k_1=0.16$, $k_1=0.35$ and $k_1=0.41$ from
Table \ref{tab:tab2}. In this paper, we focus on regular oscillations as displayed in Figure 
\ref{fig:fig1}(c); but see Section \ref{sec:outlook} for the other two regimes. For the 
analysis in Sections \ref{sec:main_bu}-\ref{sec:retmap} we assume that
\be
\label{eq:asymp_main_assume}
0< \epsilon^2\ll \epsilon_b,
\ee
where $\epsilon_b$ will be regarded as a fixed parameter and singular limits are only
considered with respect to a single time scale separation parameter $0< \epsilon\ll 1$.
Then observe that \eqref{eq:Olsen2} has a critical manifold for the singular limit 
$\epsilon=0$ given by
\be
\label{eq:main_attract}
\cC_0=\left\{ (x,y,a,b)\in \R^4:\frac{x^2}{3ab}=y\right\}. 
\ee
For all the oscillatory patterns we are interested in, the conditions $a>a^*\geq 0$ and $b>b^*\geq 0$
hold for suitable bounded constants $a^*$ and $b^*$. This implies that $\cC_0$ is a well-defined
critical manifold in the region 
\benn
\cD:=\{(a,b,x,y)\in \R^4:a>a^*,b>b^*,x\geq 0, y\geq 0\}.
\eenn 
We are going to assume from now on that all calculations are carried out within $\cD$.
Hence, all sets in the following are understood as intersections with $\cD$. Then
$\cC_0$ is normally hyperbolic attracting for \eqref{eq:Olsen2} when $x>0$. Indeed, 
consider the fastest component of the vector field on the time scale 
$\tilde{\tau}:=\tau/\epsilon$ given by
\benn
F(a,b,x,y;\epsilon):=-x^2 +\epsilon (b-\xi) x +3aby +\epsilon^2 \delta.
\eenn
Then the attraction for $x>0$ follows since we just have 
\benn
\left.\left[\frac{\partial F}{\partial x}(a,b,x,y;0)\right]\right|_{\{x>0\}}
=\left.-2x\right|_{\{x>0\}}<0.
\eenn
The results of Fenichel \cite{Fenichel4} and Tikhonov \cite{Tikhonov} (see Appendix 
\ref{ap:fastslow}) immediately imply the next result.

\begin{prop}
\label{prop:Fenichel_prop_C0}
Consider a trajectory 
\benn
\gamma(\tau)=(a(\tau),b(\tau),x(\tau),y(\tau)),\qquad \tau\in[0,T], ~T>0
\eenn
of \eqref{eq:Olsen2} with initial value $a(0),b(0),x(0),y(0)\in\cD$ such that
\benn
a(0),b(0),x(0),y(0)>0\quad\text{and}\quad a(0),b(0),x(0),y(0)=\cO(1)\text{ as $\epsilon\ra0$.}
\eenn
Assume $0<\epsilon\ll1$ is sufficiently small and all other parameters
are fixed and positive. Then $\gamma(\tau)$ is $\cO(e^{-K/\epsilon})$-close 
to the slow manifold $\cC_\epsilon$ after a finite time $\tau^*\in[0,T]$.
\end{prop}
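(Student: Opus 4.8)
The plan is to recognize \eqref{eq:Olsen2}, under the standing assumption \eqref{eq:asymp_main_assume} with $\epsilon_b$ held fixed, as a fast-slow system in Fenichel standard form: the single fast variable is $x$, and $(a,b,y)$ are the slow variables with $\cO(1)$ right-hand sides at $\epsilon=0$. One can then invoke the geometric theory recalled in Appendix~\ref{ap:fastslow}. The layer problem is the fast equation on the time scale $\tilde\tau=\tau/\epsilon$, whose right-hand side at $\epsilon=0$ is $F(a,b,x,y;0)=-x^2+3aby=-(x-\sqrt{3aby})(x+\sqrt{3aby})$. Its equilibrium set with $x>0$ is exactly $\cC_0\cap\{x>0\}$, and the computation $\partial_x F(a,b,x,y;0)=-2x<0$ recorded above shows that this sheet of $\cC_0$ is normally hyperbolic and attracting throughout $\cD$.

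First I would apply Fenichel's theorem on a compact neighborhood $\cK\subset\cD\cap\{x>x_0\}$ for some fixed $x_0>0$, obtaining for $0<\epsilon\ll1$ a locally invariant slow manifold $\cC_\epsilon$ that is $\cO(\epsilon)$-close to $\cC_0$, together with its stable fibration. The key quantitative input is that the fibers contract in the normal (fast) direction at an exponential rate $e^{-\alpha\tilde\tau}$, with $\alpha>0$ bounded below uniformly in $\epsilon$, inherited from the eigenvalue $-2x\leq-2x_0<0$ on $\cK$.

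Next I would check that the prescribed initial value lies in the basin of the attracting sheet. For frozen slow variables the fast flow $\dot x=-x^2+3aby$ drives every $x>-\sqrt{3aby}$ monotonically toward $\sqrt{3aby}$; since $a(0),b(0),y(0)>0$ are $\cO(1)$ and $x(0)>0$, the point $\gamma(0)$ lies in this basin and in the domain of the stable fibration. The exponential contraction then accumulates: along the fibration the normal distance of $\gamma(\tau)$ to $\cC_\epsilon$ is bounded by $\cO(e^{-\alpha\tau/\epsilon})$ on the slow time scale, so choosing $\tau^*$ with $\alpha\tau^*=K$ (and $K$ small enough that $\tau^*\leq T$) yields the claimed $\cO(e^{-K/\epsilon})$ estimate.

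The main obstacle is one of confinement: the estimate is valid only while $\gamma(\tau)$ remains in the compact set $\cK$ on which Fenichel's theorem was applied, so I must verify that the trajectory neither exits $\cD$ nor approaches the degenerate locus $\{x=0\}$, where normal hyperbolicity fails, during the short interval $[0,\tau^*]$. This follows because the slow drift moves $(a,b,y)$ by only $\cO(\tau^*)=\cO(1)$, because $a,b$ stay bounded below by $a^*,b^*$ in $\cD$, and because the fast field pushes $x$ upward toward $\sqrt{3aby}=\cO(1)$; passing to a forward-invariant compact subset of $\cD$ for time $\tau^*$ (equivalently, taking $\tau^*$ as small as the desired $K$ permits) removes the difficulty and completes the argument.
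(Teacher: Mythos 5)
Your argument is correct and is essentially the paper's own: the paper establishes normal hyperbolicity of $\cC_0$ for $x>0$ via $\partial F/\partial x=-2x<0$ and then states that Fenichel's and Tikhonov's theorems (Appendix~\ref{ap:fastslow}) immediately imply the proposition, which is exactly the argument you spell out in detail (compactness, stable fibration, exponential contraction, confinement). No discrepancy to report.
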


Proposition \eqref{prop:Fenichel_prop_C0} essentially describes the fastest initial dynamics
for most initial conditions. Trajectories are just attracted towards $\cC_0$. The three-dimensional
flow on $\cC_0$ in the normally hyperbolic regime is considered in Section \ref{sec:loops}. It 
will be shown that trajectories may also reach a neighbourhood of the set 
\benn
\cL_0:=\{(a,b,x,y)\in\cD:x=0=y\}\subset \cC_0.
\eenn
We observe that $\cL_0$ is a submanifold  of non-degenerate fold points since 
\benn
F(a,b,0,0;0)=0,\quad \frac{\partial F}{\partial x}(a,b,0,0;0)=0,\quad 
\frac{\partial^2 F}{\partial x^2}(a,b,0,0;0)=0,\quad
\frac{\partial F}{\partial y}(a,b,0,0;0)\neq0, 
\eenn
where we used that $a>a^*>0$, $b>b^*>0$ in $\cD$ for the $y$-derivative. The fold manifold
$\cL_0$ is not normally hyperbolic and has to be desingularized. The analysis of the fold
region is contained in Sections \ref{sec:main_bu}-\ref{sec:delay}.\medskip

\begin{figure}[htbp]
\psfrag{af}{\scriptsize{(a)}}
\psfrag{bf}{\scriptsize{(b)}}
\psfrag{ab}{\scriptsize{$a,b$}}
\psfrag{a}{\scriptsize{$a_2$}}
\psfrag{b}{\scriptsize{$b_2$}}
\psfrag{x}{\scriptsize{$x$}}
\psfrag{y}{\scriptsize{$y$}}
\psfrag{xy}{\scriptsize{$x_2,y_2$}}
\psfrag{L0}{\scriptsize{$\cL_0$}}
\psfrag{C0}{\scriptsize{$C_0$}}
\psfrag{gc}{\scriptsize{$\gamma_c$}}
\psfrag{gj}{\scriptsize{$\gamma_j$}}
\psfrag{xy0}{\scriptsize{$\{x_2=0=y_2\}$}}
\psfrag{bxi}{\scriptsize{$b_2=\xi$}}
\psfrag{2ab1}{\scriptsize{$2a_2b_2=1$}}
	\centering
		\includegraphics[width=1\textwidth]{./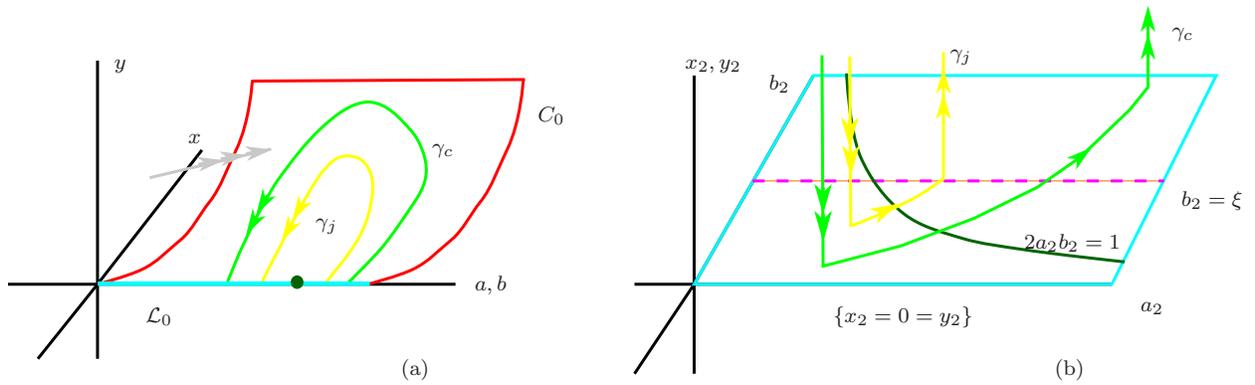}
	\caption{\label{fig:fig8}Sketch of the basic geometry for the two types of 
	non-classical relaxation oscillations inside the region $\cD$. (a) Phase space for
	the system \eqref{eq:Olsen2} which captures the large fast loops. The critical 
	manifold $C_0$ (red), two segments of the candidate orbits $\gamma_c$ (green) and $\gamma_j$ (yellow),
	the fold manifold $\cL_0$ (cyan), the submanifold $\{2ab=1,x=0=y\}$ (dark-green dot) and
	the ``super-fast'' attracting dynamics (grey triple arrow) are shown. (b) Phase space
	for \eqref{eq:Olsen2} which focuses on the slow drift near $\cL_0$ (cyan). We show
	segments of the two candidate orbits $\gamma_c$ (green), $\gamma_j$ (yellow), 
	the exchange-of-stability line $\{b_2=\xi\}$ (magenta) and the submanifold 
	$\{2a_2b_2=1,x_2=0=y_2\}$ (dark-green curve). For a description of the dynamics 
	please refer to the text in Section \ref{sec:tr_res}.}
\end{figure}

Before we proceed to state our main result, we shall motivate the geometric construction
briefly on a non-rigorous level as outlined in Figure \ref{fig:fig8}. For the following discussion, 
we refer to objects in singular limits, which we have to perturb later on. We 
start with system \eqref{eq:Olsen2} and apply Proposition \ref{prop:Fenichel_prop_C0} to 
understand the ``super-fast'' dynamics on the time scale $\tilde{\tau}:=\tau/\epsilon$. 
Trajectories get attracted to $\cC_0$. On $\cC_0$, \eqref{eq:Olsen2} yields 
a three-dimensional vector field on the time scale $\tau$
\be
\label{eq:sf_intro_outer}
\begin{array}{rcl}
\frac{da}{d\tau}&=& -aby,\\
\frac{db}{d\tau}&=& -\epsilon_baby,\\
\frac{dy}{d\tau}&=&\kappa(2ab-1) y,\\
\end{array}
\ee
where we have just used $x^2=3aby$ and $\epsilon=0$. It turns out that \eqref{eq:sf_intro_outer}
can be solved explicitly, albeit with relatively inconvenient formulas. Although 
\eqref{eq:sf_intro_outer} is formally a ``slow subsystem'' we shall refer to it as the fast
dynamics as we shall discover another (even ``slower'') system inside $\cL_0$. After some 
calculations, the solutions of \eqref{eq:sf_intro_outer} turn out to be arcs as indicated by 
Figure \ref{fig:fig8}(a) connecting two points on $\cL_0$. Furthermore, one can view
these solutions as jumps over a submanifold $\{2ab=1\}$, which we indicated as a dot in 
Figure \ref{fig:fig8}(a). Since these arcs start and end in the singular locus of 
fold points $\cL_0$ we proceed to system \eqref{eq:Olsen1}, which is a ``zoom'' of 
\eqref{eq:Olsen2} near $\cL_0$. One notices that for \eqref{eq:Olsen1}, upon taking 
$\epsilon=0=\delta$, one part of the two-dimensional critical manifold is 
given by $\{x_2=0=y_2\}$. The results from Section \ref{sec:main_bu} are going to yield 
that $\{x_2=0=y_2\}$ is attracting for $b_2<\xi$ and repelling for $b_2>\xi$; see also 
Figure \ref{fig:fig8}(b). We denote these attracting and repelling parts 
by $\cS_{2,0}^{a-}$ and $\cS_{2,0}^{r+}$. Let us follow candidate trajectories which 
get attracted to $\cS_{2,0}^{a-}$, such as $\gamma_c$ or $\gamma_j$ shown in Figure 
\ref{fig:fig8}(b). Once $\gamma_{c,j}$ reach $\cS_{2,0}^{a-}$ their dynamics is governed by 
taking $\epsilon=0$ in \eqref{eq:Olsen2} {i.e.}
\be
\label{eq:sf_intro_inner}
\begin{array}{rcl}
\frac{da_2}{ds}&=& \mu-\alpha a_2 ,\\
\frac{db_2}{ds}&=& \epsilon_b,\\
\end{array}
\ee 
which has very simple explicit solution formulas. However, the fast direction stability 
changes at $b_2=\xi$. It can be proven that for $\delta=0$, we may view $\{b_2=\xi\}$ as
a submanifold of transcritical singularities where maximal delay occurs so that $\gamma_c$
is a canard trajectory traveling for a considerable time onto $\cS_{2,0}^{r+}$ before it 
eventually jumps; see Figure \ref{fig:fig8}(b). However, if $\delta>0$ is positive and not 
exponentially small with respect to $\epsilon$, then we are in the case $\gamma_j$ where 
the candidate orbit jumps near $\{b_2=\xi\}$. In both cases, $\gamma_{c,j}$ are then in a 
fast regime away after their departure from $\cS_{2,0}^{r+}$, which allows us to connect them back from Figure 
\ref{fig:fig8}(b) to \ref{fig:fig8}(a). Taking a global view, it is then possible to 
construct two types of candidate periodic orbits $\gamma_j$ and $\gamma_c$ which can 
then be shown to perturb to periodic orbits for $0<\epsilon\ll1$. The precise statement 
is as follows:

\begin{thm}
\label{thm:main_intro}
There exists a family of open sets $(\mu_{1}(\epsilon),\mu_{2}(\epsilon))$ for some 
$\mu_i>0$ with $i=1,2$ and $\epsilon_0>0$ sufficiently small such that the Olsen 
model \eqref{eq:Olsen2} for $\mu\in(\mu_1(\epsilon),\mu_2(\epsilon))$ with 
$\epsilon\in(0,\epsilon_0]$ and otherwise standard parameter values from 
Table \ref{tab:tab2} with $k_1=0.41$ has a family of periodic orbits $\psi_{\epsilon}$
in the following two cases:
\begin{enumerate}
 \item \textbf{Canard case}: Suppose $\delta=\cO(\epsilon^2e^{-K_1/\epsilon^2})$ and $K_1>0$ is 
 some fixed constant independent of $\epsilon$. Then $\psi_\epsilon$ has 
 a canard segment which is $\cO(\epsilon^2)$-close to a repelling part of $\{x_2=0=y_2\}$
 for a time $s^*=\cO(1)$, $s^*>0$ as $\epsilon\ra 0$.
 \item \textbf{Jump case:} Suppose $\delta=K_2\epsilon^2$, 
 $K_2>0$ and $K_2$ is fixed as $\epsilon\ra 0$. Then 
 $\psi_\epsilon$ does not have a canard segment and leaves $\cL_0$ in an 
 $\epsilon$-dependent neighbourhood $\cN(\epsilon)$ of $\{b_2=\xi\}$ such that 
 $d_{\textnormal{H}}(\cN(\epsilon),\{b_2=\xi\})\ra 0$ as $\epsilon\ra 0$. 
\end{enumerate}
In both cases, $\psi_{0}$ is a candidate orbit with a slow segment in $\cL_0$ 
and a fast segment in $\cC_0$ and $d_{\textnormal{H}}(\psi_\epsilon,\psi_0)\ra 0$ 
as $\epsilon\ra 0$. In both cases, $\psi_\epsilon$ is locally asymptotically stable.
\end{thm}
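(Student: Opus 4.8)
The plan is to construct a Poincar\'e return map along the candidate orbit $\psi_0$ and to show that, for $\mu$ in a suitable $\epsilon$-dependent window, it is a well-defined contraction on a small transverse neighbourhood; its unique fixed point is then the periodic orbit $\psi_\epsilon$, and the contraction constant below one yields local asymptotic stability in one stroke via the Banach fixed point theorem. Concretely, I would first fix a cross-section $\Sigma$ transverse to $\psi_0$ in a normally hyperbolic portion of the large fast loop on $\cC_\epsilon$, away from both $\cL_0$ and the exchange-of-stability line $\{b_2=\xi\}$, so that Fenichel theory (Appendix \ref{ap:fastslow}) applies there. The full return map $\Pi_\epsilon$ then factors as a composition of transition maps already analysed in Sections \ref{sec:main_bu}--\ref{sec:loops}: the super-fast contraction onto $\cC_\epsilon$ from Proposition \ref{prop:Fenichel_prop_C0}; the explicitly integrable passage along $\cC_\epsilon$ governed by \eqref{eq:sf_intro_outer} that forms the loop and deposits the trajectory near $\cL_0$; the blow-up transition through the fold manifold $\cL_0$ from Section \ref{sec:main_bu}, landing the orbit close to the attracting sheet $\cS_{2,\epsilon}^{a-}$; the slow drift along $\cS_{2,\epsilon}^{a-}$ governed by \eqref{eq:sf_intro_inner}; and finally the passage through the transcritical line $\{b_2=\xi\}$ with the subsequent escape into the fast regime, reconnecting to $\Sigma$.

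Since the candidate orbit and the asymptotics of each transition are established in Sections \ref{sec:delay}--\ref{sec:candidate}, the argument reduces to three verifications. First, that $\Pi_\epsilon$ is defined on a closed transverse ball $B\subset\Sigma$ about $\psi_0\cap\Sigma$ and maps $B$ into itself; here the parameter window $(\mu_1(\epsilon),\mu_2(\epsilon))$ enters as the closing condition, inherited from the candidate construction of Section \ref{sec:candidate}: the $b_2$-coordinate increases monotonically along the slow drift while $a_2$ relaxes toward $\mu/\alpha$, and choosing $\mu$ in an interval shrinking to the singular value guarantees that after the fast loop the trajectory re-enters the fold funnel at the correct height so the loop closes. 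Second, that $\Pi_\epsilon$ is a contraction: the super-fast fibre contracts at rate $\cO(e^{-K/\epsilon})$ by Proposition \ref{prop:Fenichel_prop_C0}, and the drift along $\cS_{2,\epsilon}^{a-}$ contracts the remaining fast $(x_2,y_2)$ directions, so $\|D\Pi_\epsilon\|$ is bounded by a constant strictly below one. Banach then delivers existence, uniqueness, and local asymptotic stability together, while the $\cO(\cdot)$-closeness of each transition map gives $d_{\mathrm H}(\psi_\epsilon,\psi_0)\to0$ as $\epsilon\to0$.

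The distinction between the two cases enters entirely through the transcritical passage, via the bifurcation-delay analysis of Section \ref{sec:delay}. One computes the entry-exit (way-in/way-out) relation for $\{b_2=\xi\}$: when $\delta=\cO(\epsilon^2 e^{-K_1/\epsilon^2})$ is exponentially small, the accumulated contraction on $\cS_{2,0}^{a-}$ before $\{b_2=\xi\}$ is balanced by expansion on $\cS_{2,0}^{r+}$ afterwards, so the trajectory tracks the repelling sheet for an $\cO(1)$ time $s^*>0$ before jumping, producing the canard segment $\gamma_c$; when $\delta=K_2\epsilon^2$ the $\delta$-term breaks the delay and forces departure inside the shrinking neighbourhood $\cN(\epsilon)$ of $\{b_2=\xi\}$, yielding the jump orbit $\gamma_j$. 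In both cases the escape reconnects to the fast flow \eqref{eq:sf_intro_outer}, closing the global return.

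I expect the main obstacle to be precisely this transcritical passage in the canard case together with the stability estimate it forces. Unlike a classical fold, the escape point from $\cS_{2,0}^{r+}$ is set by a delay integral whose value depends sensitively on $\delta$ and on exponentially small terms, so the way-in/way-out balance must be justified rigorously in the blow-up charts, and one must confirm that the net expansion incurred on the repelling sheet is strictly dominated by the super-fast and slow-manifold contractions. Proving $\|D\Pi_\epsilon\|<1$ uniformly in $\epsilon$ despite this expanding canard segment, and simultaneously in every transverse direction of the genuinely four-dimensional, non-standard system, is the delicate point on which both existence and local asymptotic stability ultimately rest.
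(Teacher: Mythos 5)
Your overall strategy -- build the global Poincar\'e map as a composition of the transition maps from Sections \ref{sec:main_bu}--\ref{sec:candidate}, show it is a contraction, and invoke the Banach fixed point theorem to get existence, uniqueness near $\psi_0$, and local asymptotic stability in one stroke -- is exactly the route the paper takes in Section \ref{sec:retmap}, including the attribution of the $\mu$-window to the closing condition of the candidate construction and the attribution of the canard/jump dichotomy to the transcritical analysis of Section \ref{sec:delay}. However, there is a genuine gap in your contraction argument. The cross-section is three-dimensional, and normal hyperbolicity (Proposition \ref{prop:Fenichel_prop_C0} plus attraction to $\cS^{a-}_{2,\epsilon}$) only contracts the two fast $(x_2,y_2)$-type directions. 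You assert that this already bounds $\|D\Pi_\epsilon\|$ below one, but the remaining transverse direction lies inside the slow $(a,b)$-plane, and nothing in Fenichel theory or in the fold/transcritical blow-ups contracts it: the slow drift \eqref{eq:sf_intro_inner} and the large-loop flow \eqref{eq:sf_intro_outer} are both regular flows on two-dimensional invariant sets, so a separate argument is needed.

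The paper supplies this missing piece in Lemmas \ref{lem:ret1}, \ref{lem:ret2} (canard case) and Lemma \ref{lem:ret4} (jump case). The mechanism is the interplay between two distinct foliations of the $(a,b)$-plane: the large loops are confined to the invariant lines $b=\epsilon_b a+\mathrm{const}$ of \eqref{eq:inv_lines}, while the slow drift follows the explicitly solvable flow \eqref{eq:Olsen1_bu_sf_simple}. One shows that a point displaced by $\pm\rho$ in $b$ from the base point $(\alpha_0,\beta_0)$ is mapped by the maximal-delay (or jump) map $\phi_c$ (or $\phi_j$) to a point lying strictly between the invariant line of the candidate orbit and the invariant line of the perturbed starting point; the subsequent large loop preserves this ordering, and uniqueness of planar slow-flow trajectories converts it into a strict contraction of the $b$-coordinate. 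Without an argument of this kind your fixed-point step does not close. Relatedly, your stated worry -- that the expansion along the canard segment might overwhelm the fast contraction -- is not where the difficulty actually lies: trajectories remain exponentially close to $\cS^{r+}_{2,\epsilon}$ during the delay and the exit is fixed by the way-in/way-out function of Proposition \ref{prop:delay1}, so the net effect on the return map is entirely encoded in the slow-direction bookkeeping that the paper's Lemmas \ref{lem:ret1}--\ref{lem:ret2} perform.
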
 

The situation is also illustrated in Figure \ref{fig:fig8}. Heuristically, in 
view of the discussion preceding Theorem \ref{thm:main_intro},
we may concisely summarize the result as follows (see Figure \ref{fig:fig8}): 
\begin{enumerate}
\item If $\delta$ is zero or exponentially small then we have a periodic orbit
converging to a candidate orbit with a canard segment 
{i.e.}~$d_{\textnormal{H}}(\psi_\epsilon,\gamma_c)\ra 0$ as $\epsilon\ra 0$.
\item If $\delta$ scales like $\epsilon^2$ then we have a periodic orbit
converging to a candidate orbit, without a canard segment and jumping near
a (transcritical) singularity,
{i.e.}~$d_{\textnormal{H}}(\psi_\epsilon,\gamma_j)\ra 0$ as $\epsilon\ra 0$.
\end{enumerate}

Of course, one may also aim to consider other situations. For example, if 
$\delta\gg \epsilon^2$ then our analysis does not apply but this case does not occur 
in the original parameter sets used by Olsen, so we shall not discuss it here.
However, there is an interesting case which occurs when
\be
\label{eq:delta_deform}
0<\epsilon^2e^{-K/\epsilon}\ll \delta\ll \epsilon^2\ll 1.
\ee
In this case, the family periodic orbits with a canard segment deforms smoothly
into the periodic family of the jump case as $\delta$ increases. In fact, this
is precisely the case which occurs for the classical Olsen parameter values 
from Table \ref{tab:tab2} since $\epsilon^2=1.5\cdot10^{-2}$ and 
$\delta=1.2\cdot 10^{-5}$. As usual when applying GSPT, it is helpful to focus 
on the two limiting cases to describe an intermediate asymptotic regime. Desroches 
et {al.}~\cite{DesrochesKrauskopfOsinga1} computed numerical bifurcation diagrams
for the Olsen model and observed that ``the bifurcation structure does not change
in an essential way'' \cite{DesrochesKrauskopfOsinga1} when the same types of diagrams
were computed for $\delta=0$ and $\delta=1.2\times 10^{-5}$. In fact, our result shows
that there will be a substantial deformation of orbits in the system depending upon
$\delta$. However, this is no contradiction as the bifurcation diagram may not change
significantly, when plotted in parameter space only, as there is a family
of periodic orbits, whether $\delta=0$ or $\delta=1.2\cdot 10^{-5}$.\medskip

The orbit $\psi_\epsilon$ from Theorem \ref{thm:main_intro} has relaxation-type 
properties as it consists of alternating fast and slow segments but it is not a 
classical relaxation oscillation generated by a cubic critical manifold mechanism 
\cite{vanderPol1,Grasman}. Hence we use the term non-classical relaxation 
oscillation. In Sections \ref{sec:main_bu}-\ref{sec:retmap} we proceed to provide 
a proof of Theorem \ref{thm:main_intro}.

\section{The Main Blow-Up}
\label{sec:main_bu}

We start with the analysis near the fold locus $\cL_0$ which will require a blow-up; 
see Appendix \ref{ap:blowup} as well as \cite{Dumortier1,KruSzm1} for background on
geometric desingularization via the blow-up method. Coefficients to desingularize
\eqref{eq:Olsen2} are suggested by the scaling \eqref{eq:prelim_scale}. Let 
\benn
\bar{\cD}:=[a^*,\I)\times [b^*,\I) \times (\cS^2)^+_0\times [0,r_0] 
\eenn
for $r_0>0$ where $(\cS^2)^+_0\subset \R^3$ denotes the upper half of the unit sphere
including the equator. Changing the time scale to $t=\tau/\epsilon$ and augmenting
\eqref{eq:Olsen2} by $\epsilon'=0$ yields 
\be
\label{eq:Olsen2a}
\begin{array}{rcl}
a'&=& \epsilon^3(\mu-\alpha a) -\epsilon aby,\\
b'&=& \epsilon^2 (\epsilon_b\epsilon-\epsilon_bbx)-\epsilon\epsilon_baby,\\
x'&=& -x^2 +\epsilon (b-\xi) x +3aby +\epsilon^2 \delta,\\
y'&=&\epsilon\kappa(x^2-y-aby),\\
\epsilon'&=&0.
\end{array}
\ee
Consider the blow-up transformation $\Phi:\bar{\cD}\ra \cD$ defined via
\be
\label{eq:blowup1}
a=\bar{a},\qquad b=\bar{b},\qquad x=\bar{r} \bar{x},\qquad
y=\bar{r}^2 \bar{y},\qquad \epsilon=\bar{r} \bar{\epsilon} 
\ee
where $(\bar{x},\bar{y},\bar{\epsilon})\in (\cS^2)^+_0$. $\Phi$ blows up the
vector field $V$ given by \eqref{eq:Olsen2a}; see also Figure \ref{fig:fig7}(b). 
The map $\Phi$ induces a vector field
$\bar{V}$ on $\bar{\cD}$ by pushforward $\Phi_*(\bar{V})=V$. To analyze $\bar{V}$
it is convenient to consider the manifold $\bar{\cD}$ in several charts. Define
the following submanifolds 
\benn
\bar{\cD}_{\bar{x}}:=\bar{\cD}\cap\{\bar{x}>0\}\qquad 
\text{and}\qquad \bar{\cD}_{\bar{\epsilon}} :=\bar{\cD}\cap\{\bar{\epsilon}>0\}.
\eenn
The submanifold $\bar{\cD}_{\bar{y}}$ can also be considered but will yield the 
same qualitative view of the dynamics as $\bar{\cD}_{\bar{x}}$. Hence we are not 
going to need it for our analysis.

\begin{figure}[htbp]
\psfrag{y}{$y_2$}
\psfrag{a}{$a_2$}
\psfrag{x}{$x_2$}
\psfrag{ybar}{$\bar{y}$}
\psfrag{abar}{$\bar{a}$}
\psfrag{xbar}{$\bar{x}$}
\psfrag{al}{\tiny{$a_2$}}
\psfrag{s}{\tiny{$s$}}
\psfrag{alabel1}{\scriptsize{(a1)}}
\psfrag{alabel2}{\scriptsize{(a2)}}
\psfrag{blabel1}{\scriptsize{(b1)}}
\psfrag{blabel2}{\scriptsize{(b2)}}
\psfrag{psieps}{$\psi_\epsilon$}
\psfrag{C0}{$\cC_0$}
\psfrag{C0bar}{$\bar{\cC}_0$}
\psfrag{cylinder}{$r=0$}
	\centering
		\includegraphics[width=1\textwidth]{./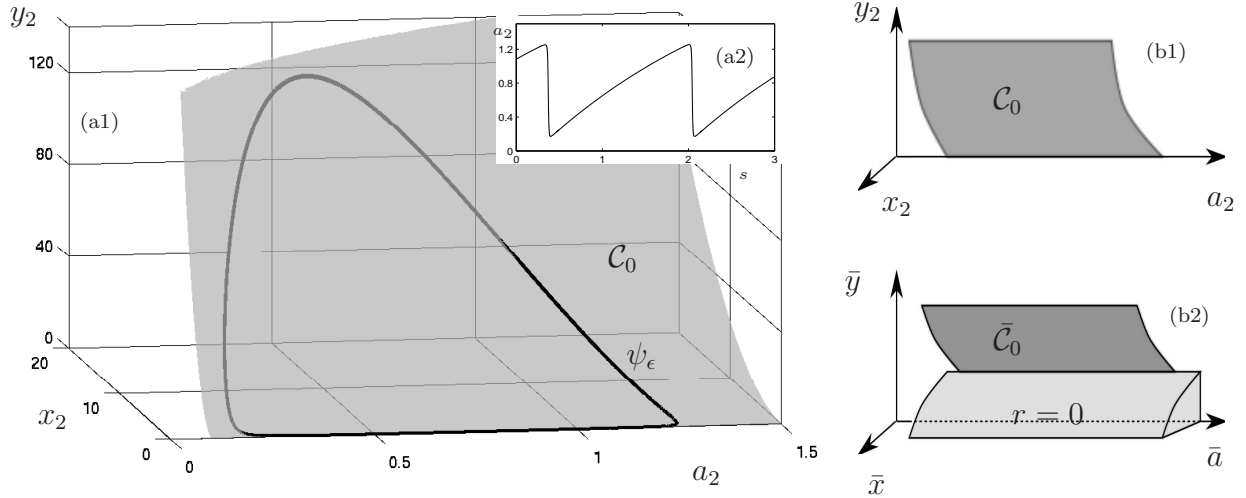}
	\caption{\label{fig:fig7}Illustration of the non-classical relaxation 
	orbit $\psi_\epsilon$, the critical manifold $\cC_0$ and the blow-up. 
	(a1) Projection into $(a_2,x_2,y_2)$-space of an integration of the full 
	system \eqref{eq:Olsen1} with standard parameter values from Table 
	\ref{tab:tab2} and $k_1=0.41$. (a2) Time series for the variable $a$. 
	(b1) Sketch of the situation before the blow-up with the critical manifold 
	(dark gray). (b2) Blown-up space where the fold points have been 
	desingularized by the transformation \eqref{eq:blowup1} inserting a 
	cylinder (light gray) giving the new domain $\bar{\cD}$.}
\end{figure}
  
\begin{lem}
\label{lem:blowup_change}
The maps $\kappa_1:\bar{\cD}_{\bar{x}}\ra \cD$ for 
$(a_1,b_1,r_1,y_1,\epsilon_1)\in \bar{\cD}_{\bar{x}}$ and 
$\kappa_2:\bar{\cD}_{\bar{\epsilon}}\ra \cD$ for 
$(a_2,b_2,x_2,y_2,r_2)\in \bar{\cD}_{\bar{\epsilon}}$ given by
\benn
\begin{array}{lllll}
a_1=\bar{a},\quad & b_1=\bar{b}, \quad & r_1=\bar{r}\bar{x},
\quad & y_1=\bar{x}^{-2}\bar{y},\quad & \epsilon_1=\bar{x}^{-1}\bar{\epsilon}\\
a_2=\bar{a},\quad & b_2=\bar{b}, \quad & x_2=\bar{\epsilon}^{-1}\bar{x},
\quad & y_2=\bar{\epsilon}^{-2}\bar{y},\quad & r_2=\bar{r}\bar{\epsilon}\\
\end{array}
\eenn
define charts for $\bar{\cD}$ in which the blow-up $\Phi$ is, respectively, given by
\be
\label{eq:main_blowup_formal}
\begin{array}{lllll}
a=a_1,\quad & b=b_1, \quad & x=r_1,\quad & y=r_1^2y_1,\quad & \epsilon=r_1\epsilon_1,\\
a=a_2,\quad & b=b_2, \quad & x=r_2x_2,\quad & y=r_2^2y_2,\quad & \epsilon=r_2.\\
\end{array}
\ee
\end{lem}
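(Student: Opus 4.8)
The plan is to prove both assertions by direct computation, since the content is essentially the bookkeeping forced by the quasi-homogeneous structure of $\Phi$ in \eqref{eq:blowup1}: the weights attached to $(x,y,\epsilon)$ are $(1,2,1)$, so $y$ scales like $\bar r^2$ whereas $x$ and $\epsilon$ scale like $\bar r$. This asymmetry is the only point needing care; it is the reason the radius enters squared in the $y$-component of \eqref{eq:main_blowup_formal} and the reason the chart definitions carry $\bar x^{-2}$ (resp.\ $\bar\epsilon^{-2}$) for $y$ but only $\bar x^{-1}$ (resp.\ $\bar\epsilon^{-1}$) for $\epsilon$. I would organise the argument one chart at a time, first establishing that the stated coordinate assignment is a genuine chart and then reading off $\Phi$ in those coordinates by substitution.

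On $\bar{\cD}_{\bar x}=\bar{\cD}\cap\{\bar x>0\}$ every negative power of $\bar x$ is smooth, so the assignment $(\bar a,\bar b,\bar r,\bar x,\bar y,\bar\epsilon)\mapsto(a_1,b_1,r_1,y_1,\epsilon_1)=(\bar a,\bar b,\bar r\bar x,\bar x^{-2}\bar y,\bar x^{-1}\bar\epsilon)$ is smooth. To see it is a chart I would exhibit a smooth inverse using the unit-sphere relation $\bar x^2+\bar y^2+\bar\epsilon^2=1$ defining $(\cS^2)^+_0$: writing $\bar y=\bar x^2 y_1$ and $\bar\epsilon=\bar x\epsilon_1$ and substituting gives $y_1^2\,\bar x^4+(1+\epsilon_1^2)\,\bar x^2-1=0$, a quadratic in $\bar x^2$ with negative constant term, hence a unique positive root $\bar x^2$ depending smoothly on $(y_1,\epsilon_1)$; this recovers $\bar x>0$, then $\bar y,\bar\epsilon$, and finally $\bar r=r_1/\bar x$. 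Thus the coordinate assignment is a diffeomorphism and defines a chart on $\bar{\cD}_{\bar x}$. Substituting the chart relations into \eqref{eq:blowup1} then gives $x=\bar r\bar x=r_1$, $y=\bar r^2\bar y=(\bar r\bar x)^2(\bar x^{-2}\bar y)=r_1^2 y_1$ and $\epsilon=\bar r\bar\epsilon=(\bar r\bar x)(\bar x^{-1}\bar\epsilon)=r_1\epsilon_1$, which is exactly the first line of \eqref{eq:main_blowup_formal} and identifies $\kappa_1$ as $\Phi$ in these coordinates.

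The chart $\bar{\cD}_{\bar\epsilon}=\bar{\cD}\cap\{\bar\epsilon>0\}$ is handled identically with $\bar\epsilon>0$ in the role of $\bar x$: the sphere relation again yields a unique smooth positive root for $\bar\epsilon^2$, so the assignment $(x_2,y_2,r_2)=(\bar\epsilon^{-1}\bar x,\bar\epsilon^{-2}\bar y,\bar r\bar\epsilon)$ is a diffeomorphism onto $\bar{\cD}_{\bar\epsilon}$, and substitution into \eqref{eq:blowup1} gives $\epsilon=\bar r\bar\epsilon=r_2$, $x=\bar r\bar x=(\bar r\bar\epsilon)(\bar\epsilon^{-1}\bar x)=r_2 x_2$ and $y=\bar r^2\bar y=r_2^2 y_2$, matching the second line of \eqref{eq:main_blowup_formal}. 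I expect no analytic obstacle; the only genuinely content-bearing step is the invertibility check, which rests entirely on the defining positivity $\bar x>0$ (resp.\ $\bar\epsilon>0$), making the normalizing divisions legitimate and the relevant root of the sphere relation smooth and unique. The remaining identities are algebraic, and together with the omitted redundant chart $\bar{\cD}_{\bar y}$ these two charts cover all of $\bar{\cD}$ needed in the sequel.
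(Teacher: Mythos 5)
Your proposal is correct and its computational core — substituting the chart relations into $\Phi$ and reading off $x=r_1$, $y=r_1^2y_1$, $\epsilon=r_1\epsilon_1$ (resp.\ $x=r_2x_2$, $y=r_2^2y_2$, $\epsilon=r_2$) — is exactly the argument the paper gives. The only difference is that you additionally verify that the coordinate assignments are genuine charts by inverting via the sphere relation, a point the paper's proof leaves implicit; this is a harmless (and welcome) extra check, not a departure in method.
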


\begin{proof}
Consider $\kappa_1$ then $r_1=\bar{r}\bar{x}$ but since for $\Phi$ we 
have $\bar{r}\bar{x}=x$ it follows that $x=r_1$. Furthermore
\benn
r_1^2y_1=r_1^2\bar{x}^{-2}\bar{y}=x^2\bar{x}^{-2}\bar{y}
=\bar{r}^2\bar{x}^2\bar{x}^{-2}\bar{y}=\bar{r}^2\bar{y}=y.
\eenn
The calculation for $\epsilon$ and the second chart $\kappa_2$ are similar.
\end{proof}

Observe that the blow-ups \eqref{eq:main_blowup_formal} in the charts 
$\kappa_1$ and $\kappa_2$ are essentially defined by the conditions 
$\bar{x}=1$ and $\bar{\epsilon}=1$.
 
\begin{lem}
\label{lem:main_blowup_cchange}
The coordinate change $\kappa_{12}$ from the first to the second 
chart and its inverse are
\be
\label{eq:main_blowup_cchange}
\begin{array}{lllll}
a_2=a_1,\quad & b_2=b_1,\quad & x_2=\epsilon_1^{-1},
\quad & y_2=y_1\epsilon_1^{-2},\quad & r_2=r_1\epsilon_1\\
a_1=a_2,\quad & b_1=b_2,\quad & r_1=r_2x_2,
\quad & y_1=y_2x_2^{-2},\quad & \epsilon_1=x_2^{-1}.\\
\end{array}
\ee
\end{lem}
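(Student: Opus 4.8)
The plan is to compute the coordinate change directly from the defining relations of the two charts in Lemma~\ref{lem:blowup_change}, working on the overlap $\bar{\cD}_{\bar{x}}\cap\bar{\cD}_{\bar{\epsilon}}$, where both $\bar{x}>0$ and $\bar{\epsilon}>0$. The cleanest route is to express the chart-2 coordinates as functions of the chart-1 coordinates by eliminating the spherical variables $\bar{x},\bar{y},\bar{\epsilon}$, rather than going through the blow-down map $\Phi$ (which would force a division by $\bar{r}$ and hence would only determine the transition off the exceptional set $\{r=0\}$, requiring a separate continuity argument). Since both sets of local coordinates are monomials in $(\bar{x},\bar{y},\bar{\epsilon},\bar{r})$, the computation is purely algebraic; the only points requiring care are that the reciprocals appearing below are legitimate because $\bar{x},\bar{\epsilon}>0$ on the overlap, and that the scaling weight of $y$ is $2$ while that of $x$ and $\epsilon$ is $1$, so $y$ picks up the square of the factor that $x_2$ and $\epsilon_1$ pick up linearly.

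First I would record the relevant relations: from $\kappa_1$ one has $\epsilon_1=\bar{x}^{-1}\bar{\epsilon}$, $y_1=\bar{x}^{-2}\bar{y}$, $r_1=\bar{r}\bar{x}$, and from $\kappa_2$ one has $x_2=\bar{\epsilon}^{-1}\bar{x}$, $y_2=\bar{\epsilon}^{-2}\bar{y}$, $r_2=\bar{r}\bar{\epsilon}$, while $a_1=a_2=\bar{a}$ and $b_1=b_2=\bar{b}$ give the trivial identities $a_2=a_1$, $b_2=b_1$. The key observation is that $\bar{x}$ and $\bar{\epsilon}$ enter only through the ratio $\bar{x}/\bar{\epsilon}$: since $x_2=\bar{x}/\bar{\epsilon}$ and $\epsilon_1=\bar{\epsilon}/\bar{x}$, we get $x_2=\epsilon_1^{-1}$ immediately. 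Substituting $\bar{x}=\epsilon_1\bar{\epsilon}$ into the remaining relations then yields $y_2=\bar{\epsilon}^{-2}\bar{y}=(\bar{x}/\bar{\epsilon})^2\,\bar{x}^{-2}\bar{y}=\epsilon_1^{-2}y_1$ and $r_2=\bar{r}\bar{\epsilon}=(\bar{r}\bar{x})(\bar{\epsilon}/\bar{x})=r_1\epsilon_1$, which are exactly the asserted formulas for $\kappa_{12}$. The inverse is obtained by the same elimination run in the opposite direction, now using $\bar{\epsilon}/\bar{x}=x_2^{-1}$: this gives $\epsilon_1=x_2^{-1}$ at once, and then $r_1=\bar{r}\bar{x}=(\bar{r}\bar{\epsilon})(\bar{x}/\bar{\epsilon})=r_2x_2$ together with $y_1=\bar{x}^{-2}\bar{y}=(\bar{\epsilon}/\bar{x})^2\,\bar{\epsilon}^{-2}\bar{y}=x_2^{-2}y_2$.

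Finally I would confirm that the two displayed maps are genuinely mutual inverses by composing them, which collapses the verification to scalar identities: starting from $(a_1,b_1,r_1,y_1,\epsilon_1)$, the forward map produces $x_2=\epsilon_1^{-1}$, $y_2=y_1\epsilon_1^{-2}$, $r_2=r_1\epsilon_1$, whence the inverse returns $r_2x_2=(r_1\epsilon_1)\epsilon_1^{-1}=r_1$, $y_2x_2^{-2}=(y_1\epsilon_1^{-2})\epsilon_1^{2}=y_1$, and $x_2^{-1}=\epsilon_1$, reproducing the original point (and symmetrically in the other order). I do not expect any genuine obstacle here; the content of the statement is bookkeeping of the blow-up weights, and the single place to stay alert is the quadratic weight carried by $y$, which is what distinguishes $y_2=\epsilon_1^{-2}y_1$ from the linear transformation laws of $x_2$ and $r_2$. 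It is also worth remarking, as the text does after Lemma~\ref{lem:blowup_change}, that setting $\bar{x}=1$ and $\bar{\epsilon}=1$ recovers the two charts, which provides an independent sanity check of the transition formulas.
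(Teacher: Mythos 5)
Your computation is correct and is exactly the routine elimination of the spherical variables from the monomial chart relations of Lemma~\ref{lem:blowup_change}; the paper in fact states Lemma~\ref{lem:main_blowup_cchange} without proof, treating precisely this bookkeeping as immediate. Your extra remarks about working on the overlap, the quadratic weight of $y$, and the composition check are all consistent with the paper's (implicit) argument.
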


With the charts available we can calculate the blown-up vector fields in each chart. 

\begin{lem}
\label{lem:chart1_vf}
In the chart $\kappa_1$ the desingularized blown-up vector field is given by
\be
\label{eq:bu_kappa1_vf_lem}
\begin{array}{lcl}
a_1'&=& \epsilon_1r_1^2\left[\epsilon_1^2(\mu-\alpha a_1)-a_1b_1y_1\right],\\
b_1'&=& \epsilon_1r_1^2\epsilon_b\left[\epsilon_1^2-\epsilon_1b_1-a_1b_1y_1\right],\\
r_1'&=& r_1\left[-1+\epsilon_1(b_1-\xi)+3a_1b_1y_1+\epsilon_1^2\delta\right],\\
y_1'&=& \kappa\epsilon_1(1-y_1(1+a_1b_1))-2y_1\left(-1+\epsilon_1(b_1-\xi)
+3a_1b_1y_1+\epsilon_1^2\delta\right),\\
\epsilon_1'&=&-\epsilon_1\left[-1+\epsilon_1(b_1-\xi)+3a_1b_1y_1+\epsilon_1^2\delta\right].\\
\end{array}
\ee
\end{lem}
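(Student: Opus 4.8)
The statement is a direct computation: one substitutes the chart-$\kappa_1$ blow-up \eqref{eq:main_blowup_formal} into the augmented system \eqref{eq:Olsen2a}, applies the chain rule, and then desingularizes by factoring out a common power of $r_1$. The plan is as follows. First I would record the derivatives that can be read off immediately. Since $a=a_1$, $b=b_1$ and $x=r_1$ are identities, we have $a'=a_1'$, $b'=b_1'$ and $x'=r_1'$, and it remains only to insert $y=r_1^2y_1$ and $\epsilon=r_1\epsilon_1$ into the right-hand sides of \eqref{eq:Olsen2a}. The $x$-equation then gives $r_1'=-r_1^2+r_1^2\epsilon_1(b_1-\xi)+3r_1^2a_1b_1y_1+r_1^2\epsilon_1^2\delta=r_1^2R$, where I abbreviate $R:=-1+\epsilon_1(b_1-\xi)+3a_1b_1y_1+\epsilon_1^2\delta$. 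Likewise the $a$- and $b$-equations yield $a_1'=r_1^3\epsilon_1[\epsilon_1^2(\mu-\alpha a_1)-a_1b_1y_1]$ and $b_1'=r_1^3\epsilon_1\epsilon_b[\epsilon_1^2-\epsilon_1b_1-a_1b_1y_1]$; the only bookkeeping here is tracking the cubic and quadratic $\epsilon$-prefactors together with the extra $r_1^2$ hidden in $y=r_1^2y_1$.

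Second, I would treat the two genuinely transformed coordinates $y$ and $\epsilon$. From $y=r_1^2y_1$ I get $y'=2r_1r_1'y_1+r_1^2y_1'$, so $y_1'=r_1^{-2}(y'-2r_1r_1'y_1)$. Inserting $y'=r_1^3\epsilon_1\kappa(1-y_1(1+a_1b_1))$ from the $y$-equation of \eqref{eq:Olsen2a} and the value $r_1'=r_1^2R$ obtained above produces $y_1'=r_1[\kappa\epsilon_1(1-y_1(1+a_1b_1))-2y_1R]$. For the last coordinate I would exploit the augmentation $\epsilon'=0$: differentiating $\epsilon=r_1\epsilon_1$ gives $0=r_1'\epsilon_1+r_1\epsilon_1'$, whence $\epsilon_1'=-(r_1'/r_1)\epsilon_1=-r_1\epsilon_1R$.

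Finally, I would observe that every component of the pulled-back field carries a common factor of $r_1$: the computations above yield $r_1^3$ in $a_1'$ and $b_1'$, $r_1^2$ in $r_1'$, and $r_1$ in $y_1'$ and $\epsilon_1'$, so the minimal common factor is exactly $r_1$. Desingularizing by dividing the vector field by $r_1$ — equivalently, performing the orientation-preserving time rescaling $\mathrm{d}t_1=r_1\,\mathrm{d}t$, valid on $\{r_1>0\}$ and extending smoothly to the blow-up locus $\{r_1=0\}$ — sends $a_1'\mapsto r_1^2\epsilon_1[\cdots]$, $r_1'\mapsto r_1R$, $y_1'\mapsto\kappa\epsilon_1(1-y_1(1+a_1b_1))-2y_1R$, $\epsilon_1'\mapsto-\epsilon_1R$, and similarly for $b_1'$, which is precisely \eqref{eq:bu_kappa1_vf_lem}.

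The argument is elementary and the main work is organizational rather than conceptual. The step demanding the most care is the $y_1'$ computation, where one must back-substitute the already-derived expression $r_1'=r_1^2R$ before dividing by $r_1^2$; getting this wrong is the easiest way to corrupt the $-2y_1R$ term. The accompanying subtlety is the $\epsilon$-power bookkeeping across the five components, since the desingularization is only consistent because the pulled-back field vanishes to order exactly $r_1$ (and no lower) uniformly — this is what guarantees that a single clean factor of $r_1$ can be removed to give a smooth field on $\bar{\cD}_{\bar{x}}$ rather than leaving mismatched residual powers.
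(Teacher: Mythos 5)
Your proposal is correct and follows essentially the same route as the paper's proof: substitute the chart formulas into the augmented system, obtain $y_1'$ from the product rule $y'=2r_1r_1'y_1+r_1^2y_1'$ and $\epsilon_1'$ from $\epsilon'=0$, then remove the common factor $r_1$ by a time rescaling. All intermediate expressions you record (in particular $r_1'=r_1^2R$ before desingularization and the back-substitution into the $y_1'$ formula) check out against the stated vector field.
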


\begin{proof}
The equations for $a_1$, $b_1$ and $r_1$ are easy to obtain. For $y_1$ we calculate
\benn
y'=2r_1r_1'y_1+r_1^2y_1'\qquad \Rightarrow\quad y_1'=\frac{y'-2r_1y_1r_1'}{r_1^2}.
\eenn 
Substituting $y'$ from \eqref{eq:Olsen2a} and using \eqref{eq:main_blowup_formal} 
gives $y_1'$. The calculation for $\epsilon_1'$ is easier since $\epsilon'=0$. 
All equations derived in this way have a multiplicative pre-factor of $r_1$, which 
can be removed by a time rescaling which yields the desingularized vector field 
\eqref{eq:bu_kappa1_vf_lem}. 
\end{proof}

In the chart $\kappa_2$ the blow-up \eqref{eq:main_blowup_formal} reduces to 
the rescaling 
\be
\label{eq:blowup1_scaling}
a=a_2,\qquad b=b_2,\qquad x=\epsilon x_2,\qquad y=\epsilon^2 y_2. 
\ee

\begin{lem}
\label{lem:chart2_flow}
In the chart $\kappa_2$ the blown-up vector field is given by
\be
\label{eq:Olsen1_bu_res}
\begin{array}{rcl}
\frac{da_2}{ds}&=& \mu-\alpha a_2 -a_2b_2y_2,\\
\frac{db_2}{ds}&=& \epsilon_b(1-b_2x_2 -a_2b_2y_2),\\
\epsilon^2\frac{dx_2}{ds}&=& 3a_2b_2y_2-x_2^2+(b_2-\xi)x_2+\delta,\\
\epsilon^2\frac{dy_2}{ds}&=& \kappa(x_2^2-y_2-a_2b_2y_2).\\
\end{array}
\ee 
where the time scale is $s=\epsilon^2\tau$.
\end{lem}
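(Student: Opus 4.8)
The plan is to substitute the chart-$\kappa_2$ coordinates directly into the augmented system \eqref{eq:Olsen2a} and then strip off the resulting common power of $\epsilon$ by a time rescaling. As already noted just before the statement, in this chart $\epsilon=r_2$, so the blow-up \eqref{eq:main_blowup_formal} collapses to the rescaling \eqref{eq:blowup1_scaling}, namely $a=a_2$, $b=b_2$, $x=\epsilon x_2$, $y=\epsilon^2 y_2$ with $\epsilon$ held fixed (recall $\epsilon'=0$). Differentiating these relations with respect to $t=\tau/\epsilon$ and using that $\epsilon$ is constant gives $a'=a_2'$, $b'=b_2'$, $x'=\epsilon\,x_2'$ and $y'=\epsilon^2 y_2'$, so each line of \eqref{eq:Olsen2a} becomes an equation for one of the chart variables once the appropriate factor of $\epsilon$ from the left-hand side is accounted for.

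First I would insert $x=\epsilon x_2$ and $y=\epsilon^2 y_2$ into the right-hand sides of \eqref{eq:Olsen2a} and collect powers of $\epsilon$. A short computation shows that after substitution the $a$- and $b$-equations each carry an overall factor $\epsilon^3$, while the $x$- and $y$-equations carry $\epsilon^2$ and $\epsilon^3$ respectively on the right (which, against the $\epsilon$ from $x'=\epsilon x_2'$ and the $\epsilon^2$ from $y'=\epsilon^2 y_2'$, leaves $x_2'$ and $y_2'$ each with a single power of $\epsilon$ on the $t$-scale). The one point of bookkeeping is that every cross term $aby$ becomes $\epsilon^2 a_2 b_2 y_2$, that $\epsilon(b-\xi)x$ becomes $\epsilon^2(b_2-\xi)x_2$, and that $x^2$ becomes $\epsilon^2 x_2^2$; tracking these uniformly produces exactly the stated polynomial right-hand sides up to the overall $\epsilon$-power.

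The final step is the time rescaling. Since $t=\tau/\epsilon$, setting $s=\epsilon^2\tau=\epsilon^3 t$ gives $\frac{d}{ds}=\epsilon^{-3}\frac{d}{dt}$. Dividing the $a_2$- and $b_2$-equations (which carry $\epsilon^3$) by $\epsilon^3$ removes the prefactor entirely, while the $x_2$- and $y_2$-equations, whose right-hand sides carry two fewer powers after this rescaling, retain a single $\epsilon^2$ on the left. This yields precisely \eqref{eq:Olsen1_bu_res}. I expect no genuine obstacle here: the only thing to watch is the uniform accounting of $\epsilon$-powers and the correct choice of time scale. As a consistency check, one observes that \eqref{eq:blowup1_scaling} together with $s=\epsilon^2\tau$ is exactly the inverse of the preliminary scaling \eqref{eq:prelim_scale} that carried \eqref{eq:Olsen1} to \eqref{eq:Olsen2}; hence the chart-$\kappa_2$ vector field must reproduce the original Olsen system, and indeed \eqref{eq:Olsen1_bu_res} agrees with \eqref{eq:Olsen1} after writing $b_2 x_2-\xi x_2=(b_2-\xi)x_2$.
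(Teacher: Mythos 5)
Your computation is correct and follows exactly the route the paper intends (the paper states this lemma without a written proof, relying on the observation that in $\kappa_2$ the blow-up collapses to the rescaling \eqref{eq:blowup1_scaling}, which inverts \eqref{eq:prelim_scale} and hence returns \eqref{eq:Olsen1}). Your power-of-$\epsilon$ bookkeeping and the time change $s=\epsilon^3 t$ check out, and the closing consistency remark identifying \eqref{eq:Olsen1_bu_res} with \eqref{eq:Olsen1} is precisely the point the paper makes immediately after the lemma.
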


Hence, the re-scaled version \eqref{eq:Olsen1} of the Olsen model we derived 
from Olsen's original equations \eqref{eq:Olsen} just resolves the dynamics 
well on one scale in a certain region of phase space. When the blow-up reduces
to a re-scaling as in $\kappa_2$, then one also refers to the corresponding chart
as the classical chart \cite{SzmolyanWechselberger1}. The chart $\kappa_1$
describes the regime where trajectories approach the submanifold of folds $\cL_0$ 
from the three-dimensional slow flow discussed in Section \ref{sec:loops}. Hence 
we are going to discuss $\kappa_1$ first.

\subsection{First Chart}

The approach towards and departure from a submanifold 
\benn
[a^*,\I)\times [b^*,\I) \times (\cS^2)^+_0\times  \{\bar{r}=0\}\cap\{\bar{x}>0\}
\eenn
consisting of an $(\bar{a},\bar{b})$-dependent family of spheres can be studied
best in the chart $\kappa_1$. In particular, we study the ODEs 
\eqref{eq:bu_kappa1_vf_lem} from Lemma \ref{lem:chart1_vf} in this section. 
The case $\epsilon_1=0$ corresponds to the equator of the spheres.

\begin{lem}
\label{lem:first_leaves}
There exists a dimension two foliation with leaves 
\be
\label{eq:first_leaves}
\{\epsilon_1=0,a_1=a_1^*,b_1=b_1^*\}
\ee
with constants $a_1^*$, $b_1^*$ for \eqref{eq:bu_kappa1_vf_lem}. The vector field
in the invariant submanifolds \eqref{eq:first_leaves} is given by
\be
\label{eq:bu_kappa1_fol1}
\begin{array}{lcr}
r_1'&=& r_1\left(3a_1^*b_1^*y_1-1\right),\\
y_1'&=& -2y_1\left(3a_1^*b_1^*y_1-1\right).\\
\end{array}
\ee
\end{lem}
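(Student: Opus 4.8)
The plan is to read off the foliation directly from the structure of the desingularized vector field \eqref{eq:bu_kappa1_vf_lem}, since the statement is essentially a consequence of which equations carry an explicit factor of $\epsilon_1$. The key observation is that the right-hand sides of the $a_1'$, $b_1'$ and $\epsilon_1'$ equations each contain such a factor. First I would note that this makes the hyperplane $\{\epsilon_1=0\}$ invariant, because $\epsilon_1'=-\epsilon_1[-1+\epsilon_1(b_1-\xi)+3a_1b_1y_1+\epsilon_1^2\delta]$ vanishes identically there. Geometrically this set is the equator of the blow-up spheres, as remarked just before the lemma, so its invariance is expected.

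Next, restricting to $\{\epsilon_1=0\}$, I would observe that $a_1'=\epsilon_1 r_1^2[\cdots]$ and $b_1'=\epsilon_1 r_1^2\epsilon_b[\cdots]$ also vanish identically, so $a_1$ and $b_1$ are constant along every trajectory lying in $\{\epsilon_1=0\}$. Hence $a_1$ and $b_1$ are first integrals of the flow restricted to this invariant hyperplane, and each common level set $\{\epsilon_1=0,\,a_1=a_1^*,\,b_1=b_1^*\}$—two-dimensional in the remaining coordinates $(r_1,y_1)$—is itself invariant. These level sets partition $\{\epsilon_1=0\}$ and give precisely the claimed dimension-two foliation \eqref{eq:first_leaves}, parameterized by the constants $(a_1^*,b_1^*)$.

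Finally, to obtain the reduced vector field \eqref{eq:bu_kappa1_fol1}, I would substitute $\epsilon_1=0$, $a_1=a_1^*$, $b_1=b_1^*$ into the $r_1'$ and $y_1'$ equations of \eqref{eq:bu_kappa1_vf_lem}. The bracket $[-1+\epsilon_1(b_1-\xi)+3a_1b_1y_1+\epsilon_1^2\delta]$ collapses to $(3a_1^*b_1^*y_1-1)$, yielding $r_1'=r_1(3a_1^*b_1^*y_1-1)$; and since the $\kappa\epsilon_1(1-y_1(1+a_1b_1))$ term in the $y_1'$ equation drops out, the remaining piece gives $y_1'=-2y_1(3a_1^*b_1^*y_1-1)$, exactly as stated.

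Since every step is an algebraic identity, there is no genuine obstacle here; the only thing to get right is recognizing that the shared factor of $\epsilon_1$ in three of the five equations simultaneously produces both the invariance of $\{\epsilon_1=0\}$ and the conservation of $a_1$ and $b_1$. I would close by noting that the reduced planar system \eqref{eq:bu_kappa1_fol1} is the object whose phase portrait governs the entry into and exit from the fold locus, so this lemma is the setup for the finer dynamical analysis that follows in this chart.
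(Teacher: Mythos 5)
Your proof is correct and follows exactly the paper's argument: the paper also establishes this lemma by direct substitution of the conditions $\epsilon_1=0$, $a_1=a_1^*$, $b_1=b_1^*$ into \eqref{eq:bu_kappa1_vf_lem}, with the invariance coming from the common factor of $\epsilon_1$ in the $a_1'$, $b_1'$ and $\epsilon_1'$ equations. You have simply written out in full what the paper leaves as a one-line remark.
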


The proof of Lemma \ref{lem:first_leaves} follows by direct substitution 
of the algebraic conditions defining \eqref{eq:first_leaves} into 
\eqref{eq:bu_kappa1_vf_lem}. The planar system \eqref{eq:bu_kappa1_fol1} can 
be analyzed directly using standard phase plane methods and linearization. 
Recall that we are only interested in the case $y_1\geq0$ and $r_1\geq 0$.

\begin{lem}
\label{lem:inv_ry}
The ODE \eqref{eq:bu_kappa1_fol1} has (see also Figure \ref{fig:fig4})
\begin{itemize}
 \item a saddle equilibrium at $(r_1,y_1)=(0,0)$ with eigenvalues $\lambda_1=-1$,
 $\lambda_2=2$ and eigendirections $v_1=(1,0)^T$, $v_2=(0,1)^T$;
 \item a line of degenerate equilibrium points $\{y_1=1/(3a_1^*b_1^*)\}$ which is
 attracting in the $v_2$ direction.  
\end{itemize}
The line $\{y_1=1/(3a_1^*b_1^*)\}$ corresponds to the attracting critical
manifold $\cC_0$ defined in \eqref{eq:main_attract}.
\end{lem}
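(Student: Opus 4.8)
The plan is to treat \eqref{eq:bu_kappa1_fol1} as a planar autonomous system and carry out an elementary but complete phase-plane analysis. Writing $c:=3a_1^*b_1^*>0$ (positive since $a_1^*\geq a^*>0$ and $b_1^*\geq b^*>0$ in $\cD$) compresses the system to $r_1'=r_1(cy_1-1)$ and $y_1'=-2y_1(cy_1-1)$, which already displays a common factor $cy_1-1$ that organizes the whole analysis.

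First I would locate all equilibria. Setting both right-hand sides to zero and exploiting the shared factor, the zero set is the union of the isolated point $(r_1,y_1)=(0,0)$ and the entire horizontal line $\{y_1=1/c\}=\{cy_1-1=0\}$, on which both components vanish identically in $r_1$. This exhibits at once the non-isolated (degenerate) nature of the second equilibrium set. Next I would linearize, using the Jacobian
\[
J(r_1,y_1)=\begin{pmatrix} cy_1-1 & cr_1 \\ 0 & -4cy_1+2 \end{pmatrix}.
\]
At the origin this gives $J(0,0)=\mathrm{diag}(-1,2)$, so the eigenvalues are $\lambda_1=-1$ and $\lambda_2=2$ with eigenvectors $v_1=(1,0)^T$ and $v_2=(0,1)^T$, establishing the hyperbolic saddle of the first bullet. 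Along the line $\{y_1=1/c\}$ the diagonal entries become $0$ and $-2$, so $J$ is upper triangular with eigenvalues $0$ and $-2$; the zero eigenvalue is forced by the tangential direction $(1,0)^T$ along the continuum of fixed points, while the eigenvalue $-2<0$ governs the transverse direction, whose eigenvector $(-cr_1/2,1)^T$ has nonzero $v_2$-component. Hence the line is normally hyperbolic attracting in the $v_2=(0,1)^T$ direction, as claimed.

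The remaining point is the identification with $\cC_0$. Using the chart formulas in \eqref{eq:main_blowup_formal}, namely $x=r_1$ and $y=r_1^2y_1$ with $a=a_1=a_1^*$, $b=b_1=b_1^*$, the defining relation $y=x^2/(3ab)$ of $\cC_0$ from \eqref{eq:main_attract} becomes $r_1^2y_1=r_1^2/(3a_1^*b_1^*)$, that is $y_1=1/c$. Thus the equilibrium line is exactly the trace of $\cC_0$ in the leaf \eqref{eq:first_leaves}, and the attracting transverse eigenvalue $-2<0$ is consistent with the normal hyperbolicity of $\cC_0$ established before Proposition \ref{prop:Fenichel_prop_C0}. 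To complete the global picture (and hence genuine, not merely linearized, convergence), I would note that the axes $\{y_1=0\}$ and $\{r_1=0\}$ are invariant, with $r_1'=-r_1<0$ on the positive $r_1$-axis and $y_1$ monotonically approaching $1/c$ along the positive $y_1$-axis.

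As for the main obstacle, there is essentially none of substance: every step is a direct linearization. The only point requiring care is the degenerate second equilibrium set, where one must cleanly separate the neutral tangential direction (the zero eigenvalue forced by the continuum) from the genuinely attracting transverse direction, and then match that transverse stability against the previously established attractivity of $\cC_0$ through the chart \eqref{eq:main_blowup_formal}.
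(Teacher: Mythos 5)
Your proposal is correct and follows essentially the same route as the paper: direct computation of the equilibria and their linearizations (yielding the saddle $\mathrm{diag}(-1,2)$ at the origin and the eigenvalues $0,-2$ along $\{y_1=1/(3a_1^*b_1^*)\}$), followed by the identification of that line with $\cC_0$ via the chart relations $x=r_1$, $y=r_1^2y_1$ from \eqref{eq:main_blowup_formal}. The paper merely compresses the first part to ``straightforward,'' so your write-up is a faithful, more detailed version of the same argument.
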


\begin{proof}
The calculations to find the equilibria and their stability are straightforward. 
Regarding the last statement about $\cC_0$, observe that Lemma 
\ref{lem:blowup_change} implies $x^2=r_1^2$ and $y=r_1^2y_1$ so 
\benn
\Phi\circ\kappa_1^{-1}\left(\{y_1=1/(3a_1b_1)\}\right)=
\Phi\circ\kappa_1^{-1}\left(\{y_1r_1^2=r_1^2/(3a_1b_1)\}\right)=\{y=x^2/(3ab)\}. 
\qedhere
\eenn
\end{proof}

There is a natural second family of invariant subspaces for \eqref{eq:bu_kappa1_vf_lem} 
for the case $r_1=0$ (i.e.~``on the sphere'') which yields a more complicated family 
of flows. To analyze this case, we shall assume that 
\be
\label{eq:delta_vanish}
\delta=\delta(\epsilon)\qquad \text{and}\qquad \delta(0)=0.
\ee
Note that \eqref{eq:delta_vanish} holds for the canard case and the jump case 
in Theorem \ref{thm:main_intro}.

\begin{lem}
Suppose \eqref{eq:delta_vanish} holds. Then there exists a dimension two foliation 
with leaves 
\be
\label{eq:second_leaves}
\{r_1=0,a_1=a_1^*,b_1=b_1^*\}
\ee
with constants $a_1^*$, $b_1^*$ for \eqref{eq:bu_kappa1_vf_lem}. 
The vector field in the invariant submanifolds \eqref{eq:second_leaves} is given by
\be
\label{eq:bu_kappa1_fol2}
\begin{array}{lcl}
y_1'&=& \kappa\epsilon_1(1-y_1(1+a^*_1b^*_1))-2y_1\left(-1+\epsilon_1(b^*_1-\xi)
+3a^*_1b^*_1y_1\right),\\
\epsilon_1'&=&-\epsilon_1\left[-1+\epsilon_1(b^*_1-\xi)+3a^*_1b^*_1y_1\right].\\
\end{array}
\ee
\end{lem}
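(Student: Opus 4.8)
The plan is to proceed exactly as in the proof of Lemma~\ref{lem:first_leaves}, namely by direct substitution into the desingularized field \eqref{eq:bu_kappa1_vf_lem}, the one genuinely new ingredient being the bookkeeping of the $\delta$-dependence enforced by \eqref{eq:delta_vanish}. First I would establish invariance of the candidate leaves \eqref{eq:second_leaves}. Inspecting \eqref{eq:bu_kappa1_vf_lem}, the right-hand sides of $a_1'$ and $b_1'$ both carry a factor $r_1^2$ while $r_1'$ carries a factor $r_1$; hence all three vanish identically on $\{r_1=0\}$. Consequently $r_1$, $a_1$ and $b_1$ are constants of motion there, so the algebraic conditions $r_1=0$, $a_1=a_1^*$, $b_1=b_1^*$ cut out invariant subsets, and letting $(a_1^*,b_1^*)$ range over $[a^*,\I)\times[b^*,\I)$ foliates the invariant set $\{r_1=0\}$ by the claimed two-dimensional leaves in the $(y_1,\epsilon_1)$-plane. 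I would emphasize that this invariance step does \emph{not} use \eqref{eq:delta_vanish}, since the factor $r_1$ in $r_1'$ is present irrespective of the $\delta$-term.

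The single point where \eqref{eq:delta_vanish} enters is the precise form of the restricted field, and this is where I would be most careful. By Lemma~\ref{lem:blowup_change} the chart $\kappa_1$ satisfies $\epsilon=r_1\epsilon_1$, so after substituting $\delta=\delta(\epsilon)=\delta(r_1\epsilon_1)$ into \eqref{eq:bu_kappa1_vf_lem} every occurrence of $\delta$ is really $\delta(r_1\epsilon_1)$. Restricting to $r_1=0$ therefore forces $\epsilon=0$ and hence $\delta(r_1\epsilon_1)=\delta(0)=0$ by \eqref{eq:delta_vanish}. In other words, the $\epsilon_1^2\delta$ contributions in the $y_1'$ and $\epsilon_1'$ equations drop out precisely on the sphere $\{r_1=0\}$, which is exactly what distinguishes the reduced equations \eqref{eq:bu_kappa1_fol2} from the unrestricted ones.

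With both observations in hand the remainder is a one-line computation: setting $r_1=0$, $a_1=a_1^*$, $b_1=b_1^*$ and $\delta=0$ in the last two lines of \eqref{eq:bu_kappa1_vf_lem} reproduces \eqref{eq:bu_kappa1_fol2} verbatim, giving a planar autonomous flow in $(y_1,\epsilon_1)$ parametrized cleanly by $(a_1^*,b_1^*)$. I do not expect a genuine obstacle: the statement is essentially a verification, and the only place demanding attention is recognizing, through $\epsilon=r_1\epsilon_1$, that \eqref{eq:delta_vanish} is precisely the hypothesis that suppresses the parameter $\delta$ on the blown-up sphere. Were $\delta$ instead a fixed nonzero constant, the surviving $\epsilon_1^2\delta$ terms would alter \eqref{eq:bu_kappa1_fol2}, so this hypothesis cannot be omitted.
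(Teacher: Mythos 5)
Your proposal is correct and follows essentially the same route as the paper, which proves the lemma by direct substitution of $r_1=0$, $a_1=a_1^*$, $b_1=b_1^*$ into \eqref{eq:bu_kappa1_vf_lem} together with $\delta(0)=0$. Your extra observation that the hypothesis \eqref{eq:delta_vanish} acts precisely through $\epsilon=r_1\epsilon_1$ (so that $\delta(r_1\epsilon_1)=\delta(0)=0$ on the sphere, while invariance of $\{r_1=0\}$ itself needs no such hypothesis) is an accurate and slightly more explicit account of the same computation.
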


The proof of Lemma \ref{lem:first_leaves} follows by direct substitution 
of the algebraic conditions defining \eqref{eq:second_leaves} into 
\eqref{eq:bu_kappa1_vf_lem} and using $\delta(0)=0$. For the analysis of 
\eqref{eq:bu_kappa1_fol2} we start with the case
\be
\label{eq:as_bxi}
|\xi-b_1^*|\geq K>0\qquad \text{for a fixed constant $K$ 
independent of $\epsilon$.}
\ee
The situation near $|\xi-b_1^*|= 0$ is different and will be covered at 
the end of this section. If \eqref{eq:as_bxi} holds, there are three equilibrium 
points
\be
\label{eq:equils_chart1}
\begin{array}{lcl}
(y_1,\epsilon_1)&=&(0,0)=:p_1, \\
(y_1,\epsilon_1)&=&\left(\frac{1}{3a_1^*b_1^*},0\right)=:p_2, \\
(y_1,\epsilon_1)&=&\left(\frac{1}{1+a_1^*b_1^*}, 
\frac{1-2 a_1^* b_1^*}{(1+a_1^* b_1^*) (b_1^*-\xi)}\right)=:p_3.\\
\end{array}
\ee
To determine the stability of the eigenvalues we calculate the linearization for
\eqref{eq:bu_kappa1_fol2} with Jacobian(s)
\bea
\label{eq:linearized_r1_syst}
A_{1j}&:=&\left.D_{y_1,\epsilon_1}\left(\begin{array}{c}y_1'\\ \epsilon_1'\\
\end{array}\right)\right|_{p_j}=\\
&=&
\left.\left(\begin{array}{cc}
2-a_1^* b_1^* (12 y_1+\epsilon_1 \kappa)-\epsilon_1 (2 b_1^*+\kappa-2 \xi)&
\kappa-y_1 (\kappa+b_1^* (2+a_1^* \kappa)-2 \xi)\\
-3 a_1^* b_1^* \epsilon_1& 1-3 a_1^* b_1^* y_1-2 b_1^* \epsilon_1+2 \epsilon_1 \xi
\end{array}\nonumber
\right)\right|_{p_j}
\eea
The next result summarizes the relevant stability information for the three 
equilibrium points.

\begin{figure}[htbp]
\psfrag{y1}{$y_1$}
\psfrag{eps1}{$\epsilon_1$}
\psfrag{r1}{$r_1$}
\psfrag{p1}{$p_1$}
\psfrag{p2}{$p_2$}
\psfrag{p3}{$p_3$}
\psfrag{a}{(a)}
\psfrag{b}{(b)}
\psfrag{Ca}{$\bar{C}_0$}
	\centering
		\includegraphics[width=1\textwidth]{./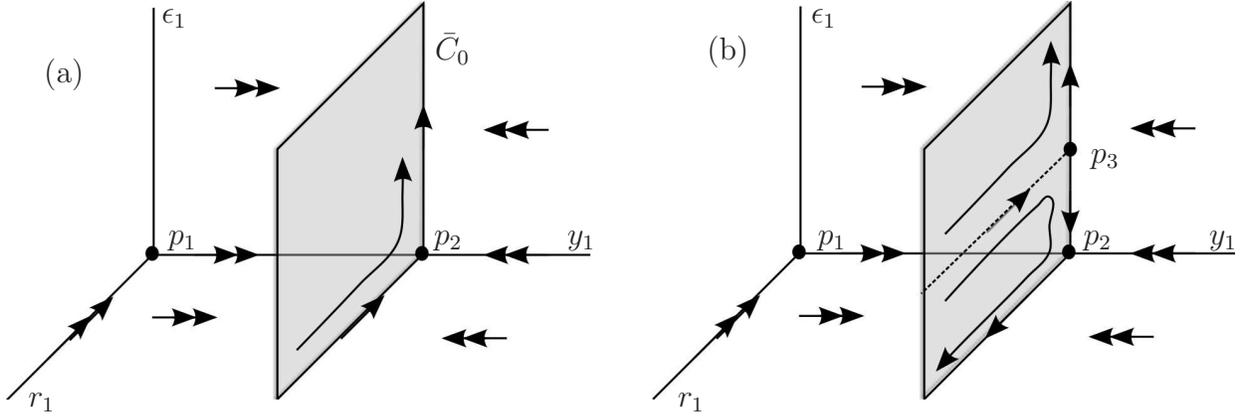}
	\caption{\label{fig:fig4}Sketch of the flows for the chart $\kappa_1$ for 
	\eqref{eq:bu_kappa1_vf_lem}. The variables $a_1=a_1^*$ and $b_1=b_1^*$ are 
	fixed and the case $b_1^*<\xi$ is shown. The gray surface indicates the 
	blow-up of the critical manifold $C_0$ which corresponds to the center manifold 
	$\cM_1$. Double arrows indicate strong attraction or repulsion and single 
	arrows indicate a center flow. (a) $2a_1^*b_1^*<1$: Due to the center flow 
	on $\cM_1$ trajectories approach the sphere and flow upwards near the 
	saddle $p_2$. (b) $2a_1^*b_1^*>1$: The additional equilibrium $p_3$ may prevent 
	the flow up the sphere.}
\end{figure}

\begin{lem}
\label{lem:inv_ey}
Suppose \eqref{eq:as_bxi} holds. The equilibria of \eqref{eq:bu_kappa1_fol2} have 
the following types
\begin{itemize}
 \item $p_1$ is an unstable node with eigenvalues $1$ and $2$,
 \item $p_2$ is center-stable with eigenvalues $-2\kappa$ and $0$. The stable manifold
 associated to the eigenvalue $-2\kappa$ is given by
 \benn
 W^s(p_2)=\{(y_1,\epsilon_1)\in\R^2:\epsilon_1=0,y_1>0\}.
 \eenn
\end{itemize}
Furthermore, $p_3\not\in\bar{\cD}_{\bar{\epsilon}}$ for $(\xi-b_1^*)(2 a_1^* b_1^*-1)<0$, 
$p_3\in\bar{\cD}_{\bar{\epsilon}}$ for $(\xi-b_1^*)(2 a_1^* b_1^*-1)>0$ and $p_3=p_2$ 
when $2 a_1^* b_1^*=1$. If $p_3\in\bar{\cD}_{\bar{\epsilon}}$ then 
\begin{itemize}
 \item $p_3$ is a saddle for $b_1^*<\xi$,
 \item $p_3$ is a sink for $b_1^*>\xi$.
\end{itemize}
\end{lem}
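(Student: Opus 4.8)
The plan is to treat \eqref{eq:bu_kappa1_fol2} as a planar system and classify each of the three equilibria in \eqref{eq:equils_chart1} by evaluating the Jacobian \eqref{eq:linearized_r1_syst} at the relevant point. For $p_1$ and $p_2$, which both lie on the invariant line $\{\epsilon_1=0\}$, the lower-left entry $-3a_1^*b_1^*\epsilon_1$ of \eqref{eq:linearized_r1_syst} vanishes, so the Jacobian is upper triangular and its eigenvalues are read straight off the diagonal; for $p_3$ I would instead argue from the signs of the determinant and trace.

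First I would dispose of $p_1$ and $p_2$. Substituting $p_1=(0,0)$ gives diagonal entries $2$ and $1$, both positive, so $p_1$ is an unstable node with the stated eigenvalues. Substituting $p_2=(1/(3a_1^*b_1^*),0)$ makes the upper-left entry a strictly negative constant and the lower-right entry $0$, so $p_2$ has one strongly contracting and one center eigenvalue, i.e.\ it is center-stable. For the stable manifold I would use that $\{\epsilon_1=0\}$ is invariant (since $\epsilon_1'=0$ there) and that on it \eqref{eq:bu_kappa1_fol2} reduces to $y_1'=2y_1(1-3a_1^*b_1^*y_1)$, for which $p_2$ attracts every point of $\{y_1>0\}$; this invariant half-line is tangent to the contracting eigenvector $(1,0)^T$, so by uniqueness of the strong stable manifold it equals $W^s(p_2)=\{\epsilon_1=0,\ y_1>0\}$.

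For $p_3$ I would first confirm it solves the equilibrium equations by factoring $\epsilon_1'=0$: off $\{\epsilon_1=0\}$ the bracket in \eqref{eq:bu_kappa1_fol2} must vanish, which kills the second term of $y_1'$ and forces $1-y_1(1+a_1^*b_1^*)=0$, pinning down the coordinates in \eqref{eq:equils_chart1}. Since the $y_1$-coordinate $1/(1+a_1^*b_1^*)$ is always positive, membership in $\bar{\cD}_{\bar{\epsilon}}$ is decided by the sign of the $\epsilon_1$-coordinate $\tfrac{1-2a_1^*b_1^*}{(1+a_1^*b_1^*)(b_1^*-\xi)}$; because $1+a_1^*b_1^*>0$ this sign is that of $(\xi-b_1^*)(2a_1^*b_1^*-1)$, which yields the three cases, and at $2a_1^*b_1^*=1$ the coordinates collapse to those of $p_2$.

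Finally, for the stability of $p_3$ (assuming $2a_1^*b_1^*\neq1$ so that it is a genuine third equilibrium) I would compute the trace and determinant of $A_{13}$, first simplifying the entries using the two equilibrium relations $y_1(1+a_1^*b_1^*)=1$ and (bracket)$=0$. The key step, which I expect to be the main obstacle, is the determinant: after substitution the terms not carrying a factor of $\kappa$ cancel identically, leaving
\be
\det A_{13}=\frac{\kappa\,(2a_1^*b_1^*-1)^2}{(1+a_1^*b_1^*)(b_1^*-\xi)},
\ee
whose sign equals that of $b_1^*-\xi$. Hence for $b_1^*<\xi$ we get $\det A_{13}<0$ and $p_3$ is a saddle. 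For $b_1^*>\xi$ we get $\det A_{13}>0$; here the existence condition $(\xi-b_1^*)(2a_1^*b_1^*-1)>0$ forces $1-2a_1^*b_1^*>0$, and I would then check that the trace $\tfrac{-4a_1^*b_1^*-1}{1+a_1^*b_1^*}-\tfrac{\kappa(1-2a_1^*b_1^*)}{b_1^*-\xi}$ is a sum of two strictly negative terms, so $p_3$ is a sink. Spotting and verifying the determinant cancellation is the crux; everything else is bookkeeping with \eqref{eq:linearized_r1_syst}.
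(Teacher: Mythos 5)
Your proposal is correct and follows essentially the same route as the paper: read off the eigenvalues of the triangular Jacobians $A_{11}$, $A_{12}$ for $p_1$, $p_2$, use invariance of $\{\epsilon_1=0\}$ for $W^s(p_2)$, decide membership of $p_3$ in $\bar{\cD}_{\bar{\epsilon}}$ from the sign of its $\epsilon_1$-component, and classify $p_3$ via the trace and the determinant $\det(A_{13})=\kappa(1-2a_1^*b_1^*)^2/\bigl((1+a_1^*b_1^*)(b_1^*-\xi)\bigr)$, whose $\kappa$-free terms indeed cancel as you predicted. The only point you leave open is the precise value of the strong eigenvalue at $p_2$ (you say only ``strictly negative''); evaluating $A_{12}$ gives $-2$ rather than the $-2\kappa$ stated in the lemma, so your hedge is harmless and the discrepancy lies in the statement, not in your argument.
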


\begin{proof}
The stability results for $p_1$ and $p_2$ follow immediately by looking at the 
$2\times 2$-matrices $A_{11}$ and $A_{12}$ from \eqref{eq:linearized_r1_syst}. The 
stable manifold result for $p_2$ follows from the local information at $p_2$ and
the invariance of the $\{\epsilon_1=0\}$-subspace of \eqref{eq:bu_kappa1_fol2}.
Looking at the sign of the $\epsilon_1$-component of $p_3$ implies when $p_3$ is 
visible in the domain $\bar{\cD}_{\bar{\epsilon}}$. For the stability, we consider 
$A_{13}\in \R^{2\times2}$ from \eqref{eq:linearized_r1_syst}. If $b_1^*<\xi$ then
$p_3$ is a saddle since 
\benn
\det (A_{13})=\frac{(1 - 2 a_1^* b_1^*)^2 \kappa}{(1 + a_1^* b_1^*) (b_1^* -\xi)}<0.
\eenn
Another direct calculation yields, using $(\xi-b_1^*)(2 a_1^* b_1^*-1)>0$, that
\benn
\text{trace}( A_{13})=-4 + \frac{3}{1 + a_1^* b_1^*} + 
\frac{(2 a_1^* b_1^*-1)\kappa}{b_1^* - \xi}<-4 + \frac{3}{1 + a_1^* b_1^*}<0
\eenn
so that for $b_1^*>\xi$ the equilibrium $p_3$ is a sink.
\end{proof}

The equilibrium point $p_3$ passes from the lower-half of the sphere $\epsilon_1<0$ 
to the upper half $\epsilon_1>0$ on the curve $2 a_1^* b_1^*=1$. This implies that 
the flow on the upper half-sphere has two different regimes. Furthermore, the type 
of the equilibria may change based upon the two sub-cases given by $b_1^*<\xi$ 
and $b_1^*>\xi$. This shows the necessity to consider the incoming flow towards the 
fold submanifold very carefully as the variables $(a,b)$ act as additional parameters 
for the invariant foliations in the chart $\kappa_1$. 

Since $p_2$ always exists as an equilibrium point and has one center direction, it 
is necessary to calculate the center manifold. In particular, we return to the system  
\be
\label{eq:bu_kappa1_vf_cm3D}
\begin{array}{lcl}
r_1'&=& r_1\left[-1+\epsilon_1(b^*_1-\xi)+3a^*_1b^*_1y_1\right],\\
y_1'&=& \kappa\epsilon_1(1-y_1(1+a^*_1b^*_1))-
2y_1\left(-1+\epsilon_1(b^*_1-\xi)+3a^*_1b^*_1y_1\right),\\
\epsilon_1'&=&-\epsilon_1\left[-1+\epsilon_1(b^*_1-\xi)+3a^*_1b^*_1y_1\right].\\
\end{array}
\ee 
 
\begin{prop}
\label{prop:twocases}
The center manifold $\cM_1$ for \eqref{eq:bu_kappa1_vf_cm3D} at the 
equilibrium $p_2$ is given as the graph of
\be
\label{eq:M1}
y_1=\frac{1}{3a_1^*b_1^*}+\epsilon_1 \frac{2(\xi-b_1^*)+
\kappa(2 a_1^* b_1^* -1)}{6 a_1^* b_1^*}+
c_{22}\epsilon_1^2+\cO(\epsilon_1^3,\epsilon_1^2r_1,\epsilon_1r_1^2,r_1^3)
\ee
where 
\benn
c_{22}= \frac{\kappa(1+4a_1^*b_1^*)}{24a_1^*b_1^*}(2(b_1^*-\xi)+\kappa(1-2a_1^*b_1^*)). 
\eenn
The flow on $\cM_1$ is
\be
\label{eq:cm_flow1}
\begin{array}{lcl}
r_1'&=& r_1\left[\frac{\kappa(2 a_1^* b_1^* -1)}{2}\epsilon_1+
3a^*_1b^*_1c_{22}\epsilon_1^2+
\cO(\epsilon_1^3,\epsilon_1^2r_1,\epsilon_1r_1^2,r_1^3)\right],\\
\epsilon_1'&=&-\epsilon_1\left[\frac{\kappa(2 a_1^* b_1^* -1)}{2}\epsilon_1
+3a^*_1b^*_1c_{22}\epsilon_1^2+
\cO(\epsilon_1^3,\epsilon_1^2r_1,\epsilon_1r_1^2,r_1^3)\right].\\
\end{array}
\ee
For $b_1^*<\xi$ there are two qualitative cases for the flow \eqref{eq:cm_flow1} 
near the center manifold as shown in Figure \ref{fig:fig4}. For $b_1^*>\xi$ the 
center flow in \ref{fig:fig4}(a) is directed away from the sphere while the 
equilibrium $p_3$ becomes a sink in Figure \ref{fig:fig4}(b).
\end{prop}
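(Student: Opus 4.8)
The plan is to realize $\cM_1$ as an attracting center manifold of the frozen three–dimensional system \eqref{eq:bu_kappa1_vf_cm3D} at $p_2=(r_1,y_1,\epsilon_1)=(0,1/(3a_1^*b_1^*),0)$ and then read off the reduced flow by restriction. First I would linearize \eqref{eq:bu_kappa1_vf_cm3D} at $p_2$. Writing $G:=-1+\epsilon_1(b_1^*-\xi)+3a_1^*b_1^*y_1$ for the common bracket, one has $r_1'=r_1G$ and $\epsilon_1'=-\epsilon_1 G$, and $G$ vanishes at $p_2$; hence both the $r_1$– and the $\epsilon_1$–rows of the Jacobian vanish there, while the $y_1$–row contributes $\partial y_1'/\partial y_1=-2$ (together with an inessential off–diagonal $\partial y_1'/\partial\epsilon_1$). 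The resulting block–triangular Jacobian has spectrum $\{0,0,-2\}$, with a two–dimensional center eigenspace spanned by $\partial_{r_1}$ and by a vector in the $(y_1,\epsilon_1)$–plane, and a one–dimensional stable eigenspace along $\partial_{y_1}$. Because the only transverse eigenvalue is $-2<0$, $\cM_1$ is locally attracting and the reduction principle guarantees that the dynamics near $p_2$ is, up to an exponentially decaying $y_1$–transient, governed by the flow on $\cM_1$.

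The structural observation I would exploit to avoid a genuine two–variable computation is that both $G$ and the right–hand side of the $y_1$–equation depend on $(r_1,\epsilon_1)$ only through $y_1$ and an explicit factor of $\epsilon_1$; neither contains $r_1$ explicitly. Consequently, if $y_1=\Psi(\epsilon_1)$ solves the scalar invariance equation $\Psi'(\epsilon_1)\,(-\epsilon_1 G)=y_1'$ of the planar leaf system \eqref{eq:bu_kappa1_fol2}, then the graph $\{y_1=\Psi(\epsilon_1)\}$ is automatically invariant for \eqref{eq:bu_kappa1_vf_cm3D} as well, since $\tfrac{d}{dt}\bigl(y_1-\Psi(\epsilon_1)\bigr)$ vanishes on it. This $r_1$–independent graph is tangent to the center eigenspace and is therefore a center manifold; it explains directly why no $r_1$, $r_1^2$ or $\epsilon_1 r_1$ terms appear in \eqref{eq:M1}. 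It then remains to solve the scalar invariance equation order by order. Substituting $\Psi(\epsilon_1)=\tfrac{1}{3a_1^*b_1^*}+A\epsilon_1+c_{22}\epsilon_1^2+\cO(\epsilon_1^3)$ and matching powers, the linear balance fixes $A=\tfrac{2(\xi-b_1^*)+\kappa(2a_1^*b_1^*-1)}{6a_1^*b_1^*}$, the linear coefficient in \eqref{eq:M1}, and the quadratic balance determines $c_{22}$; a short simplification using the identity $3a_1^*b_1^*A=\tfrac12\bigl(\kappa(2a_1^*b_1^*-1)+2(\xi-b_1^*)\bigr)$ brings it into the stated closed form.

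With $\Psi$ in hand I would obtain the reduced flow by substituting $y_1=\Psi(\epsilon_1)$ into $r_1'=r_1G$ and $\epsilon_1'=-\epsilon_1 G$. The same identity for $3a_1^*b_1^*A$ makes the order–$\epsilon_1$ contributions of $(b_1^*-\xi)\epsilon_1$ and $3a_1^*b_1^*\Psi$ collapse to $G=\tfrac{\kappa(2a_1^*b_1^*-1)}{2}\epsilon_1+3a_1^*b_1^*c_{22}\epsilon_1^2+\cO(\cdots)$, which is exactly the bracket in \eqref{eq:cm_flow1}. For the qualitative dichotomy I would analyze the sign of the leading coefficient $\tfrac{\kappa(2a_1^*b_1^*-1)}{2}$: for $\epsilon_1>0$ this sign controls whether $\epsilon_1$ increases (case $2a_1^*b_1^*<1$, trajectories carried up the blown–up sphere) or decreases (case $2a_1^*b_1^*>1$), with $r_1$ moving oppositely, consistent with the conserved product $r_1\epsilon_1=\epsilon$. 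Combining this with Lemma \ref{lem:inv_ey}, which records both when $p_3$ enters $\bar{\cD}_{\bar{\epsilon}}$ (governed by the sign of $(\xi-b_1^*)(2a_1^*b_1^*-1)$) and its type (saddle for $b_1^*<\xi$, sink for $b_1^*>\xi$), yields the two portraits of Figure \ref{fig:fig4} for $b_1^*<\xi$ together with the corresponding modifications when $b_1^*>\xi$.

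The computations become routine once the structure is seen, and the genuinely delicate points are twofold. The conceptual one is verifying that the $r_1$–independent graph really is the center manifold and invoking the attractivity/reduction principle uniformly in the frozen parameters $(a_1^*,b_1^*)$. The technical one is the order–$\epsilon_1^2$ bookkeeping that must collapse into the factored form $c_{22}=\tfrac{\kappa(1+4a_1^*b_1^*)}{24a_1^*b_1^*}\bigl(2(b_1^*-\xi)+\kappa(1-2a_1^*b_1^*)\bigr)$; I expect this algebra to be the most error–prone step, while the essential simplification is the $r_1$–independence observation that reduces the whole problem to a planar center–manifold computation.
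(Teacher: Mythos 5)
Your proof is correct and arrives at the same expansion and reduced flow as the paper, but by a genuinely streamlined route. The paper's Appendix~\ref{ap:cm1} treats the full three-dimensional system: it translates $p_2$ to the origin, conjugates the linearization by the matrix $M$, and solves the two-variable invariance equation with the ansatz $h(x_1,x_2)=k_{11}x_1^2+k_{12}x_1x_2+k_{22}x_2^2+\cO(3)$, finding $k_{11}=k_{12}=0$ only at the end of the computation. You instead notice that the $(y_1,\epsilon_1)$-equations of \eqref{eq:bu_kappa1_vf_cm3D} close among themselves (no explicit $r_1$), so the cylinder in the $r_1$-direction over the one-dimensional center manifold of the leaf system \eqref{eq:bu_kappa1_fol2} is invariant, tangent to the center eigenspace, and hence an admissible $\cM_1$; this explains a priori the absence of $r_1$-dependent quadratic terms and collapses the bookkeeping to a scalar invariance equation. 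I checked that your order-by-order balances give $A=\frac{2(\xi-b_1^*)+\kappa(2a_1^*b_1^*-1)}{6a_1^*b_1^*}$, the identity $(b_1^*-\xi)+3a_1^*b_1^*A=\tfrac{\kappa}{2}(2a_1^*b_1^*-1)$, and $c_{22}=-\tfrac{\kappa(1+4a_1^*b_1^*)}{4}A$, which coincides with the stated closed form, so \eqref{eq:cm_flow1} follows exactly as you say. Your treatment of the dichotomy (sign of $\kappa(2a_1^*b_1^*-1)/2$ in the desingularized center flow, combined with Lemma \ref{lem:inv_ey} for the location and type of $p_3$) is the same phase-plane step the paper performs. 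One minor remark: your transverse eigenvalue $-2$ agrees with the appendix computation, whereas Lemma \ref{lem:inv_ey} quotes $-2\kappa$; that discrepancy lies in the paper, not in your argument.
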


\begin{remark}
Note that the center flow is very degenerate when 
$2a_1^*b_1^*=1$ and $b_1^*=\xi$. This corresponds to the case when the initial 
conditions in the chart $\kappa_1$ lie exactly on the degenerate singularity 
$(a,b)=(1/(2\xi),\xi)$. It will be shown in Section \ref{sec:loops} that 
this case will not occur due to the form of the slow flow on $\cC_0$ for the 
periodic orbits we consider in this paper; see Figure \ref{fig:fig8}.  
\end{remark}

\begin{proof}(of Proposition \ref{prop:twocases})
The center manifold calculation is contained in Appendix \ref{ap:cm1} which 
yields \eqref{eq:M1} and consequently also \eqref{eq:cm_flow1}. The results 
in Figure \ref{fig:fig4} follow from Lemma \ref{lem:inv_ry}, Lemma 
\ref{lem:inv_ey} and phase plane analysis of \eqref{eq:cm_flow1} for the 
two cases $2 a_1^* b_1^*>1$ and $2 a_1^* b_1^*<1$. More precisely, 
desingularizing \eqref{eq:cm_flow1} by rescaling time with $1/\epsilon_1$ 
we note that $(r_1,\epsilon_1)=(0,0)=:0$ is saddle for \eqref{eq:cm_flow1}. 
If $2 a_1^* b_1^*<1$ then the stable and unstable eigenspaces are locally 
given by $E^{s}(0)=\{\epsilon_1=0\}$ and $E^{u}(0)=\{r_1=0\}$. The local 
directions are reversed for $2 a_1^* b_1^*>1$ so that $E^{u}(0)=\{\epsilon_1=0\}$ 
and $E^{s}(0)=\{r_1=0\}$.
\end{proof}

Using Proposition \ref{prop:Fenichel_prop_C0}, the correspondence of $\cC_0$ 
and $\cM_1$ via Lemma \ref{lem:inv_ry}, the exponential attraction of 
$\cM_1$ in the $y_1$-direction and the description of the flow 
\eqref{eq:cm_flow1} in Proposition \ref{prop:twocases}, it follows that, 
depending on the invariant foliation determined by the coordinates $(a_1^*,b_1^*)$, 
several cases can occur.

\begin{prop}
\label{prop:approach}
Suppose \eqref{eq:as_bxi} holds, then four cases can occur
\begin{enumerate}
 \item[(C1)] If $2 a_1^* b_1^*<1$, $b_1^*<\xi$ then orbits approach
 $\bar{\cD}\cap\{\bar{r}=0\}$ and flow up the family of spheres into
 the chart $\kappa_2$. 
 \item[(C2)] If $2 a_1^* b_1^*<1$, $b_1^*>\xi$ then orbits approach
 $\bar{\cD}\cap\{\bar{r}=0\}$ and flow up the family of spheres into
 the chart $\kappa_2$ towards the sink $p_3$.
 \item[(C3)] If $2 a_1^* b_1^*>1$ and $b_1^*<\xi$ then orbits may either
 approach $\bar{\cD}\cap\{\bar{r}=0\}$ and flow up the family of spheres 
 into the chart $\kappa_2$, or turn around and continue in the slow flow 
 on $\cC_0$. This case depends upon the initial condition.
 \item[(C4)] If $2 a_1^* b_1^*>1$ and $b_1^*>\xi$ then orbits flow
 away from $\bar{\cD}\cap\{\bar{r}=0\}$.
\end{enumerate}
\end{prop}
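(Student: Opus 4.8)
The plan is to reduce the full three-dimensional chart-$\kappa_1$ flow \eqref{eq:bu_kappa1_vf_cm3D} to the two-dimensional reduced flow \eqref{eq:cm_flow1} on the center manifold $\cM_1$, and then to read off the four cases from the sign of $2a_1^*b_1^*-1$ together with the equilibrium structure supplied by Lemma \ref{lem:inv_ey}. First I would justify the reduction: by Proposition \ref{prop:Fenichel_prop_C0} incoming trajectories are attracted to $\cC_0$, which by Lemma \ref{lem:inv_ry} corresponds to $\cM_1$ in the chart $\kappa_1$, and the eigenvalue $-2\kappa<0$ at $p_2$ from Lemma \ref{lem:inv_ey} gives exponential attraction transverse to $\cM_1$ in the $y_1$-direction. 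Hence, after a short transient, the fate of an orbit on each leaf $\{a_1=a_1^*,\,b_1=b_1^*\}$ is governed by \eqref{eq:cm_flow1} in the variables $(r_1,\epsilon_1)$, with $y_1$ slaved via \eqref{eq:M1}.

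The key structural observation is that $r_1\epsilon_1=\epsilon$ is conserved along \eqref{eq:cm_flow1}: writing the common bracket as $B=\tfrac{\kappa(2a_1^*b_1^*-1)}{2}\epsilon_1+\cO(\epsilon_1^2,\epsilon_1 r_1,r_1^2)$ we have $r_1'=r_1B$ and $\epsilon_1'=-\epsilon_1 B$, so $(r_1\epsilon_1)'=0$. Orbits therefore travel along the invariant hyperbolae $r_1\epsilon_1=\epsilon$, and the sign of $B$ alone decides the direction. For $\epsilon_1>0$ small and $\kappa>0$ this sign equals the sign of $2a_1^*b_1^*-1$; equivalently, invoking the $1/\epsilon_1$ time-rescaling from the proof of Proposition \ref{prop:twocases}, the origin $(r_1,\epsilon_1)=(0,0)$ (which is $p_2$) is a saddle whose stable and unstable directions along $\{\epsilon_1=0\}$ and $\{r_1=0\}$ interchange according to $\mathrm{sign}(2a_1^*b_1^*-1)$. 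Thus $2a_1^*b_1^*<1$ forces $\epsilon_1$ to increase (orbits press toward $\{\bar r=0\}$ and flow up the spheres into $\kappa_2$), while $2a_1^*b_1^*>1$ forces $\epsilon_1$ to decrease near $p_2$ (orbits move back out onto $\cC_0$).

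The final step splits the two subcases by the location and type of the equilibrium $p_3$ on the sphere $\{r_1=0\}$, which by Lemma \ref{lem:inv_ey} lies in $\bar{\cD}_{\bar\epsilon}$ exactly when $(\xi-b_1^*)(2a_1^*b_1^*-1)>0$, as a saddle if $b_1^*<\xi$ and a sink if $b_1^*>\xi$. In (C1) ($2a_1^*b_1^*<1$, $b_1^*<\xi$) the product is negative, $p_3$ is absent, and the monotone upward flow carries orbits cleanly into $\kappa_2$; in (C2) ($b_1^*>\xi$) the same upward flow now heads toward the sink $p_3$, which is visible in $\kappa_2$. In (C4) ($2a_1^*b_1^*>1$, $b_1^*>\xi$) the product is again negative, $p_3$ is absent, and the leading-order downward flow returns every orbit to the slow flow on $\cC_0$. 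These three cases are routine phase-plane arguments once the reduction and the conserved quantity are in hand.

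I expect the genuine difficulty to be (C3) ($2a_1^*b_1^*>1$, $b_1^*<\xi$), where $p_3$ is a saddle of the sphere flow \eqref{eq:bu_kappa1_fol2}. Here the leading-order flow near $p_2$ pushes $\epsilon_1$ down, yet the saddle introduces a separatrix: the stable set of $p_3$ divides the reduced phase portrait into a region whose orbits escape up the family of spheres into $\kappa_2$ and a region whose orbits turn around and rejoin $\cC_0$, which is precisely the asserted dependence on the initial condition. Making this rigorous requires combining the degenerate behavior at the origin $p_2$ (handled by the time-rescaling) with the hyperbolic saddle structure of $p_3$ within $\{r_1=0\}$, and tracking the resulting separatrix along the invariant hyperbolae $r_1\epsilon_1=\epsilon$ for small $\epsilon>0$; care is needed because $p_3$ carries a zero $r_1$-direction eigenvalue in the full three-dimensional flow, so the separation must be argued on the $\epsilon$-level rather than read off from a hyperbolic splitting at $p_3$ itself.
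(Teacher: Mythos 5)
Your proposal is correct and follows essentially the same route as the paper: attraction to $\cC_0$ (hence to $\cM_1$ via Lemma \ref{lem:inv_ry} and the $-2\kappa$ eigenvalue), the sign of the common bracket in the reduced flow \eqref{eq:cm_flow1} governed by $2a_1^*b_1^*-1$ after the $1/\epsilon_1$ desingularization of Proposition \ref{prop:twocases}, and Lemma \ref{lem:inv_ey} to sort the sub-cases by the presence and type of $p_3$. Your explicit use of the conserved quantity $r_1\epsilon_1=\epsilon$ and the separatrix discussion for (C3) merely make precise what the paper leaves implicit.
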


For this paper, two of the cases from Proposition \ref{prop:approach} are 
relevant. For the canard and the jump case in Theorem \ref{thm:main_intro}
we are going to need (C1) to track orbits from $\kappa_1$ to $\kappa_2$ when 
they enter a neighbourhood of $\cL_0$. For the canard case, we need (C4) to 
track orbits from $\kappa_2$ to $\kappa_1$ when they leave a neighbourhood of 
$\cL_0$; see also Figure \ref{fig:fig8}. Note that the points $a_1^*,b_1^*$ 
are always the values of the $(a,b)$-coordinates once a vicinity of the center 
manifold $\cM_{1}$ has been reached. Although we shall not need (C2)-(C3) here,
we record them for future work; see also Section \ref{sec:outlook}.\medskip

It remains to investigate the case $b_1^*=\xi$ which will be relevant for the
departure phase for the jump case in Theorem \ref{thm:main_intro}. We shall only
consider the following case
\be
\label{eq:degen_jump_chart1}
b_1^*=\xi, \quad 2a_1^*b_1^*>1, \quad a_1^*-\frac{1}{2\xi}=K>0\qquad \text{for a 
fixed constant $K$ independent of $\epsilon$.}
\ee
This means that we only track orbits transitioning between $\kappa_2$ and 
$\kappa_1$ away from the degenerate singularity $(a,b)=(1/(2\xi),\xi)$ and above 
the curve $\{2ab=1\}$; see Figure \ref{fig:fig8}(b). Under the assumption 
\eqref{eq:degen_jump_chart1} the system \eqref{eq:bu_kappa1_fol2} reduces to
\be
\label{eq:bu_kappa1_fol2a}
\begin{array}{lcl}
y_1'&=& \kappa\epsilon_1(1-y_1(1+a^*_1\xi))-2y_1\left(-1
+3a^*_1\xi y_1\right),\\
\epsilon_1'&=&-\epsilon_1\left[-1+3a^*_1\xi y_1\right].\\
\end{array}
\ee
It is easy to check that \eqref{eq:bu_kappa1_fol2a} only has the two equilibrium
points 
\be
\label{eq:equils_chart1a}
\begin{array}{lcl}
(y_1,\epsilon_1)&=&(0,0)=:p_1, \\
(y_1,\epsilon_1)&=&\left(\frac{1}{3a_1^*b_1^*},0\right)=:p_2, \\
\end{array}
\ee
where it is natural to use the same notation as in \eqref{eq:equils_chart1}. The next 
result is easy  to check by following the same calculations as above using the matrices 
$A_{11}, A_{12}\in\R^{2\times 2}$ for $b_1^*=\xi$.

\begin{lem}
\label{lem:inv_eya}
Suppose \eqref{eq:degen_jump_chart1} holds. The equilibria of \eqref{eq:bu_kappa1_fol2a} are
given by \eqref{eq:equils_chart1a}. $p_1$ is an unstable node with eigenvalues $1$ and $2$.
$p_2$ is center-stable with eigenvalues $-2\kappa$ and $0$. The stable manifold associated 
to the eigenvalue $-2\kappa$ is given by $W^s(p_2)=\{\epsilon_1=0,y_1>0\}$.
Furthermore, the center manifold reduction from Proposition \ref{prop:twocases} is still
valid. 
\end{lem}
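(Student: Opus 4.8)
The plan is to specialize the computations already carried out for Lemma~\ref{lem:inv_ey} and Proposition~\ref{prop:twocases} to the degenerate value $b_1^*=\xi$; everything is a direct substitution except for one point of care about the center manifold, which I return to at the end. First I would locate the equilibria of \eqref{eq:bu_kappa1_fol2a}. Setting $\epsilon_1'=0$ forces either $\epsilon_1=0$ or $y_1=1/(3a_1^*\xi)$, and feeding these into $y_1'=0$ yields only $y_1\in\{0,\,1/(3a_1^*\xi)\}$ on the line $\{\epsilon_1=0\}$; a third root off this line would require $1-y_1(1+a_1^*\xi)=0$ simultaneously with $y_1=1/(3a_1^*\xi)$, i.e.\ $2a_1^*\xi=1$, which is excluded by \eqref{eq:degen_jump_chart1}. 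Equivalently, the third equilibrium $p_3$ of the nondegenerate analysis carries a factor $(b_1^*-\xi)^{-1}$ in its $\epsilon_1$-coordinate and thus leaves the chart to infinity as $b_1^*\to\xi$, so only $p_1$ and $p_2$ from \eqref{eq:equils_chart1a} survive.

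Next I would extract the linear data from the matrices $A_{11},A_{12}$ of \eqref{eq:linearized_r1_syst} evaluated at $b_1^*=\xi$. Both are upper triangular at their respective equilibria, since the $(2,1)$-entry $-3a_1^*b_1^*\epsilon_1$ vanishes on $\{\epsilon_1=0\}$, so the eigenvalues are read directly off the diagonal: at $p_1$ they are $2$ and $1$, giving an unstable node; at $p_2$ one diagonal entry is $0$ (the center direction) and the other is strictly negative (the fast stable direction), confirming that $p_2$ is center-stable. The stable manifold claim then follows from the invariance of $\{\epsilon_1=0\}$ (as $\epsilon_1$ divides $\epsilon_1'$) together with the scalar reduced flow $y_1'=2y_1(1-3a_1^*\xi y_1)$ on that line, whose positive solutions converge monotonically to $y_1=1/(3a_1^*\xi)$; hence $W^s(p_2)=\{\epsilon_1=0,\,y_1>0\}$, precisely the stable eigendirection integrated out.

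For the final assertion I would return to the three-dimensional system \eqref{eq:bu_kappa1_vf_cm3D} and check that, at $b_1^*=\xi$, its linearization at $p_2$ keeps the same spectral splitting used in Proposition~\ref{prop:twocases}: a single strictly negative eigenvalue along $y_1$ and a double zero spanned by $(r_1,\epsilon_1)$, with $y_1$ slaved to $(r_1,\epsilon_1)$. Because the center manifold computation of Appendix~\ref{ap:cm1} depends only on this splitting (and on polynomial coefficients that are continuous in $b_1^*$), it transfers verbatim with $b_1^*=\xi$ substituted, and the reduced flow is the corresponding specialization of \eqref{eq:cm_flow1}.

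The main obstacle is exactly this last step: one must be sure the center manifold reduction, derived for $b_1^*\neq\xi$, does not break down in the limit $b_1^*\to\xi$ where $p_3$ escapes the chart. The point to verify is that the stable eigenvalue at $p_2$ is continuous in $b_1^*$ and stays bounded away from $0$, so the spectral gap---and with it the two-dimensional center manifold---persists uniformly. Moreover, under \eqref{eq:degen_jump_chart1} the leading coefficient $\tfrac{1}{2}\kappa(2a_1^*\xi-1)=\kappa\xi K$ of the reduced flow \eqref{eq:cm_flow1} remains strictly positive, so the reduction is nondegenerate and the departure of orbits along $r_1$ (the jump) is genuine; this is where the hypothesis $a_1^*-1/(2\xi)=K>0$ is used.
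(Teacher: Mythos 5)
Your argument is correct and is essentially the paper's own: the paper offers no separate proof, stating only that the lemma is ``easy to check by following the same calculations as above using the matrices $A_{11},A_{12}$ for $b_1^*=\xi$,'' and you supply exactly those substitutions (including the correct observation that $p_3$ escapes through $\epsilon_1\to\infty$ as $b_1^*\to\xi$, and the persistence of the center manifold reduction with leading coefficient $\kappa\xi K>0$). One minor remark: the direct computation of the hyperbolic eigenvalue at $p_2$ gives $-2$ (consistent with $J_{33}=-2$ in Appendix~\ref{ap:cm1}) rather than the $-2\kappa$ quoted in the lemma statement, so your deliberately noncommittal ``strictly negative'' is the safe reading.
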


\begin{cor}
\label{cor:jump_canard_chart1}
Suppose \eqref{eq:degen_jump_chart1} holds. Then, the case (C4) from Proposition 
\eqref{prop:approach} applies to orbits transitioning from $\kappa_2$ to $\kappa_1$.
\end{cor}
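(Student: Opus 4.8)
The plan is to reduce the corollary to a single sign check on the center-manifold flow \eqref{eq:cm_flow1}. The key structural observation is that, in Proposition \ref{prop:approach}, the dichotomy between orbits \emph{approaching} the sphere $\bar{\cD}\cap\{\bar r=0\}$ (as in (C1)) and orbits \emph{leaving} it (as in (C4)) is controlled entirely by the sign of the leading coefficient $\tfrac{\kappa(2a_1^*b_1^*-1)}{2}$ appearing in \eqref{eq:cm_flow1}. This coefficient depends only on $2a_1^*b_1^*-1$ and not directly on $b_1^*-\xi$; the relation between $b_1^*$ and $\xi$ entered Proposition \ref{prop:approach} only through the presence and type of the auxiliary equilibrium $p_3$. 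Since \eqref{eq:degen_jump_chart1} sits exactly on the boundary $b_1^*=\xi$ while keeping $2a_1^*b_1^*>1$, I expect the leaving behaviour of (C4) to persist.

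First I would invoke Lemma \ref{lem:inv_eya}, which guarantees that under \eqref{eq:degen_jump_chart1} only the equilibria $p_1$ and $p_2$ from \eqref{eq:equils_chart1a} remain in the chart --- indeed the $\epsilon_1$-coordinate of $p_3$ in \eqref{eq:equils_chart1} diverges as $b_1^*\ra\xi$, so $p_3\not\in\bar{\cD}_{\bar{\epsilon}}$, exactly as in case (C4) --- and that the center-manifold reduction of Proposition \ref{prop:twocases} is still valid. Hence the reduced flow on $\cM_1$ is given by \eqref{eq:cm_flow1} evaluated at $b_1^*=\xi$. Because $\kappa>0$ and $2a_1^*b_1^*>1$ by \eqref{eq:degen_jump_chart1}, the leading coefficient $\tfrac{\kappa(2a_1^*b_1^*-1)}{2}$ is strictly positive, so for $\epsilon_1>0$ small the bracket in \eqref{eq:cm_flow1} is positive, giving $r_1'>0$ and $\epsilon_1'<0$. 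This is precisely the flow that transports orbits off the sphere, i.e.~case (C4); note also that $r_1\epsilon_1=\epsilon$ is invariant along \eqref{eq:cm_flow1}, so increasing $r_1$ and decreasing $\epsilon_1$ are two faces of the same departure. An orbit transitioning from $\kappa_2$ to $\kappa_1$ is captured by the strongly attracting center manifold $\cM_1$ (the eigenvalue $-2\kappa<0$ of $p_2$) with $(a_1^*,b_1^*)$ satisfying \eqref{eq:degen_jump_chart1}, and therefore inherits exactly this behaviour.

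The one point that needs genuine care --- and the step I expect to be the main obstacle --- is the fate of the second-order coefficient $c_{22}$ at the boundary value $b_1^*=\xi$. When $b_1^*=\xi$ the factor $2(b_1^*-\xi)$ vanishes and $c_{22}$ reduces to $\tfrac{\kappa^2(1+4a_1^*\xi)(1-2a_1^*\xi)}{24a_1^*\xi}$, which is \emph{negative} since $2a_1^*\xi=2a_1^*b_1^*>1$. Thus the $\epsilon_1^2$-term in \eqref{eq:cm_flow1} now opposes the leading $\epsilon_1$-term, and one must confirm that this sign change does not destroy the (C4) picture. It does not: the competing term is $\cO(\epsilon_1^2)$ while the dominant term is $\cO(\epsilon_1)$, so on a sufficiently small neighbourhood of the equator $\{\epsilon_1=0\}$ --- which is precisely where the transition $\kappa_2\to\kappa_1$ occurs --- the sign of the bracket is fixed by the positive leading coefficient. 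Restricting attention to such a neighbourhood therefore gives $r_1'>0$ throughout, and (C4) applies, completing the argument.
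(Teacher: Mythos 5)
Your proposal is correct and follows essentially the same route as the paper: invoke Lemma \ref{lem:inv_eya} to reduce to the dynamics near $p_2$ and the still-valid center-manifold flow \eqref{eq:cm_flow1}, then conclude from $2a_1^*b_1^*>1$ that the leading coefficient forces orbits off the sphere as in (C4). The paper's proof is just these two sentences; your additional checks (that $p_3$ leaves the chart as $b_1^*\ra\xi$, and that the sign change of $c_{22}$ at $b_1^*=\xi$ is harmless because it enters only at order $\epsilon_1^2$) merely make explicit what the paper leaves implicit.
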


\begin{proof}
By Lemma \ref{lem:inv_eya} we may focus on the dynamics near $p_2$ and consider the 
center manifold reduction \eqref{eq:cm_flow1}. Since $2a_1^*b_1^*>1$ holds by
\eqref{eq:degen_jump_chart1}, the result follows.
\end{proof}

\subsection{Second Chart}
\label{ssec:chart2}

In the last section, we have controlled the flow arriving from $\cC_0$ near $\cL_0$ and
the situation when orbits leave the vicinity of $\cL_0$ towards $\cC_0$. It remains to 
analyze the dynamics in the chart $\kappa_2$ which describes the slow dynamics near 
$\cL_0$. The analysis in this section focuses on 
the system \eqref{eq:Olsen1_bu_res} from Lemma \ref{lem:chart2_flow} which we repeat
here for convenience
\be
\label{eq:Olsen1_bu}
\begin{array}{rcl}
\frac{da_2}{ds}&=& \mu-\alpha a_2 -a_2b_2y_2,\\
\frac{db_2}{ds}&=& \epsilon_b(1-b_2x_2 -a_2b_2y_2),\\
\epsilon^2\frac{dx_2}{ds}&=& 3a_2b_2y_2-x_2^2+(b_2-\xi)x_2+\delta,\\
\epsilon^2\frac{dy_2}{ds}&=& \kappa(x_2^2-y_2-a_2b_2y_2).\\
\end{array}
\ee 
We are not going to make the restriction yet that $\delta=\delta(\epsilon)$
with $\delta(0)=0$ to cover a more general case and view $\delta$ just as a 
parameter. Then the critical manifold of \eqref{eq:Olsen1_bu} is given by
\benn
\cC_{2,0}:=\left\{(a_2,b_2,x_2,y_2)\in\bar{\cD}:
a_2=\frac{x_2^2+x_2(\xi-b_2)-\delta}{b_2(2x_2^2+x_2(b_2-\xi)+\delta)},
y_2=\frac{2x_2^2+x_2(b_2-\xi)+\delta}{3}\right\}.
\eenn
Let $\frac{(1+a_2b_2)(\xi-b_2)}{ 4 a_2 b_2-2}=:l_{2}^\delta(a_2,b_2,\xi)$ and define
\be
\label{eq:def_C2_first}
\begin{array}{lcl}
\cS^{r-}_{2,0}&:=&\cC_{2,0}\cap\{b_2<\xi,x_2>l^\delta_2(a_2,b_2,\xi)\},\\
\cS^{a-}_{2,0}&:=&\cC_{2,0}\cap\{b_2<\xi,x_2<l^\delta_2(a_2,b_2,\xi)\},\\
\cS^{r+}_{2,0}&:=&\cC_{2,0}\cap\{b_2>\xi,x_2<l^\delta_2(a_2,b_2,\xi)\},\\
\cS^{a+}_{2,0}&:=&\cC_{2,0}\cap\{b_2>\xi,x_2>l^\delta_2(a_2,b_2,\xi)\}.\\
\end{array}
\ee
Direct fast-slow systems calculations and Fenichel's Theorem yield the next result;
see also Figure \ref{fig:fig5}.

\begin{prop}
\label{prop:second_chart_prop}
The manifold $\cC_{2,0}$ contains a curve of fold points given by
\benn
\cL^\delta_{2,0}=\left\{(a_2,b_2,x_2,y_2)\in \cC_{2,0}:
x_2=l_{2}^\delta(a_2,b_2,\xi)\right\}.
\eenn
For $a_2\neq1/(2\xi)$ we have that 
\begin{itemize}
 \item $\cS^{r\pm}_{2,0}$ are normally hyperbolic of saddle-type,
 \item $\cS^{a\pm}_{2,0}$ are normally hyperbolic attracting.
\end{itemize}
Furthermore, the manifolds $\cS^{r-}_{2,0}$ and $\cS^{a+}_{2,0}$ are 
unbounded as follows
\begin{itemize}
 \item for $(a_2,b_2,x_2,y_2)\in \cS^{r-}_{2,0}$ we have that 
 $(x_2,y_2)\ra (+\I,+\I)$ when $(2a_2b_2-1)\ra 0$,
 \item for $(a_2,b_2,x_2,y_2)\in \cS^{a+}_{2,0}$ we have that 
 $(x_2,y_2)\ra (+\I,+\I)$ when $(2a_2b_2-1)\ra 0$.
\end{itemize}
For $0<\epsilon\ll 1$ there exist slow manifolds $\cS^{a\pm}_{2,\epsilon}$ 
and $\cS^{r\pm}_{2,\epsilon}$. For $\delta=0$, the curve $\cL^\delta_{2,0}$ 
becomes a line of transcritical points located at $\{b_2=\xi,x_2=0=y_2\}$.  
\end{prop}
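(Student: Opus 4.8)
The plan is to treat \eqref{eq:Olsen1_bu} as a fast--slow system with fast variables $(x_2,y_2)$ and slow variables $(a_2,b_2)$, so that the normal hyperbolicity of $\cC_{2,0}$ is decided entirely by the layer problem at $\epsilon=0$. First I would freeze $(a_2,b_2)$ as parameters and record the Jacobian $M$ of the fast vector field $(x_2',y_2')=\bigl(3a_2b_2y_2-x_2^2+(b_2-\xi)x_2+\delta,\ \kappa(x_2^2-y_2-a_2b_2y_2)\bigr)$ with respect to $(x_2,y_2)$. A direct computation gives $\operatorname{tr}M=(b_2-\xi)-2x_2-\kappa(1+a_2b_2)$ and $\det M=-\kappa\bigl[(4a_2b_2-2)x_2+(1+a_2b_2)(b_2-\xi)\bigr]$. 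A branch is normally hyperbolic as long as $M$ has no eigenvalue on the imaginary axis; the fold degeneracy of interest is the simple zero $\det M=0$, and one verifies that the purely imaginary case ($\operatorname{tr}M=0$, $\det M>0$) does not arise on $\cC_{2,0}$. Solving $\det M=0$ for $x_2$ returns precisely $x_2=l_2^\delta(a_2,b_2,\xi)$, which identifies $\cL_{2,0}^\delta=\cC_{2,0}\cap\{x_2=l_2^\delta\}$ as the locus where normal hyperbolicity breaks; checking that the zero eigenvalue is simple with a nonzero quadratic coefficient along its kernel and that $x_2\mapsto\det M$ crosses transversally confirms these are genuine fold points away from the excluded value $a_2=1/(2\xi)$.

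The classification into the four branches of \eqref{eq:def_C2_first} then reduces to sign bookkeeping for $\det M$ and $\operatorname{tr}M$. A branch is \emph{attracting} precisely when both fast eigenvalues have negative real part, i.e.\ $\det M>0$ and $\operatorname{tr}M<0$, and of \emph{saddle type} when $\det M<0$. Since the sign of $\det M$ as a function of $x_2-l_2^\delta$ is controlled by the prefactor $4a_2b_2-2$, I would split according to $2a_2b_2\gtrless 1$ and combine this with the sign of $b_2-\xi$; the negativity of $\operatorname{tr}M$ on the attracting branches follows because it is dominated by the term $-\kappa(1+a_2b_2)$. Matching the resulting sign patterns with the regions in \eqref{eq:def_C2_first} yields that $\cS_{2,0}^{a\pm}$ are attracting and $\cS_{2,0}^{r\pm}$ are saddle-type. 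The unboundedness statement is read off the same data: the fold position $l_2^\delta$, and with it the adjacent branches $\cS_{2,0}^{r-}$ and $\cS_{2,0}^{a+}$, recedes to infinity exactly when the denominator $4a_2b_2-2$ degenerates, i.e.\ as $2a_2b_2\to1$, forcing $(x_2,y_2)\to(+\infty,+\infty)$.

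With the singular picture in hand, the persistence of $\cS_{2,\epsilon}^{a\pm}$ and $\cS_{2,\epsilon}^{r\pm}$ for $0<\epsilon\ll1$ is a direct application of Fenichel's theorem (Appendix \ref{ap:fastslow}) to compact normally hyperbolic subsets of each branch, exactly as in Proposition \ref{prop:Fenichel_prop_C0}; one only restricts to pieces bounded away from both $\cL_{2,0}^\delta$ and the degenerate fiber $2a_2b_2=1$, which the previous paragraph already isolates. For the transcritical statement at $\delta=0$ I would observe that the trivial set $\{x_2=0=y_2\}$ becomes invariant for the layer problem for all $(a_2,b_2)$ and intersects the nontrivial sheet $\cC_{2,0}$. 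Restricting $M$ to this trivial sheet makes it upper triangular with eigenvalues $b_2-\xi$ and $-\kappa(1+a_2b_2)$; the second is always negative while the first changes sign across $b_2=\xi$, so the two sheets exchange stability there. Evaluating $l_2^0$ at $b_2=\xi$ gives $x_2=0$ and hence $y_2=0$, so $\cL_{2,0}^\delta$ collapses onto the exchange-of-stability line $\{b_2=\xi,\,x_2=0=y_2\}$, which is a line of transcritical rather than fold singularities.

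I expect the main obstacle to be precisely the sign bookkeeping of the second paragraph together with the interplay of the two degeneracies $2a_2b_2=1$ and $b_2=\xi$. The prefactor $4a_2b_2-2$ reverses the orientation of the fold, so the labelling of ``attracting'' versus ``saddle'' across $\cL_{2,0}^\delta$ flips as one passes through $2a_2b_2=1$, and it is exactly at $(a_2,b_2)=(1/(2\xi),\xi)$ that this denominator-zero collides with the transcritical crossing $b_2=\xi$. Keeping these two degeneracies separate---which is why $a_2=1/(2\xi)$ must be excluded and why the two branches become unbounded rather than simply folding---is the delicate part; the eigenvalue computations and the Fenichel perturbation step are then routine.
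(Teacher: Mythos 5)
Your proposal is correct and follows exactly the route the paper intends: the paper offers no written proof beyond the remark that ``direct fast-slow systems calculations and Fenichel's Theorem yield the next result'', and your layer-problem Jacobian --- with $\operatorname{tr}M$ and $\det M$ as you compute them, and the fold locus $\det M=0$ reproducing $x_2=l_2^\delta(a_2,b_2,\xi)$ --- is precisely that calculation. The sign bookkeeping you defer does work out as you predict (in particular, on $\cC_{2,0}\cap\{b_2>\xi\}$ one automatically has $2a_2b_2<1$, which is what makes $\cS^{a+}_{2,0}$ attracting and $\cS^{r+}_{2,0}$ saddle-type), so the branch classification, the unboundedness as $2a_2b_2\to 1$, the Fenichel persistence step and the $\delta=0$ transcritical degeneration all go through as stated.
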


\begin{figure}[htbp]
\psfrag{a2}{$a_2$}
\psfrag{b2}{$b_2$}
\psfrag{x2}{$x_2$}
\psfrag{b=xi}{$b_2=\xi$}
\psfrag{gamma}{$\psi_\epsilon$}
\psfrag{2ab=1}{$2a_2b_2=1$}
\psfrag{a}{(a)}
\psfrag{b}{(b)}
\psfrag{c}{(c)}
\psfrag{Sa+}{$\cS^{a+}_{2,0}$}
\psfrag{Sa-}{$\cS^{a-}_{2,0}$}
\psfrag{Sr+}{$\cS^{r+}_{2,0}$}
\psfrag{Sr-}{$\cS^{r-}_{2,0}$}
	\centering
		\includegraphics[width=1\textwidth]{./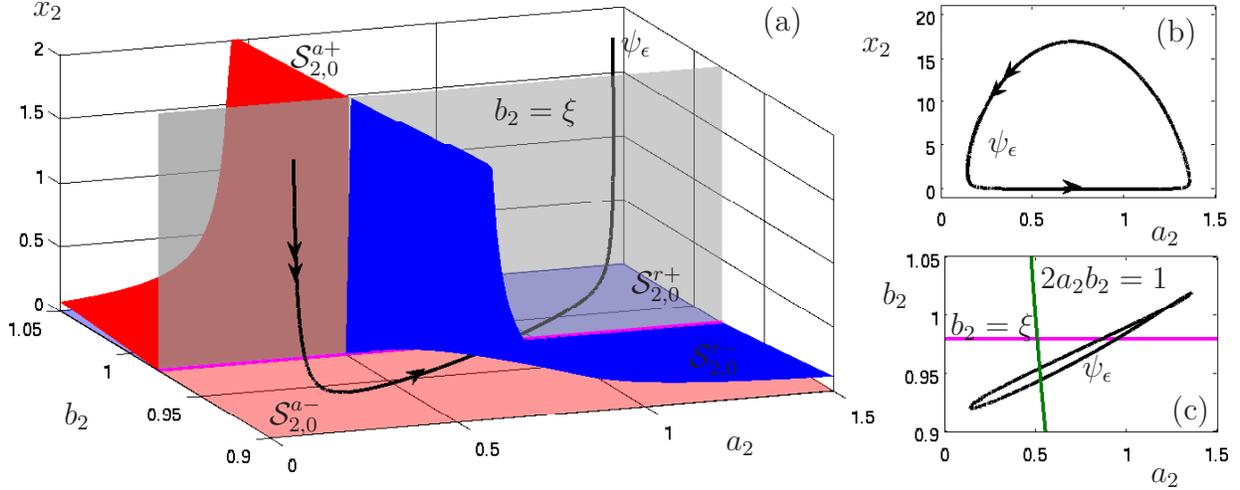}
	\caption{\label{fig:fig5}Illustration for the dynamics and 
	fast-slow decomposition of \eqref{eq:Olsen1_bu}. (a) 
	Three-dimensional projection into $(a_2,b_2,x_2)$-space. 
	For $\delta=0$, the critical manifolds from Proposition 
	\ref{prop:second_chart_prop} (blue=repelling, red=attracting), 
	the transcritical singularities $\cL^\delta_{2,0}$ (magenta) 
	and the hyperplane $\{b_2=\xi\}$ are shown. We also superimpose 
	a truncated periodic solution $\psi_\epsilon$ for $k_1=0.41$ 
	and standard parameter values as shown in Figure \ref{fig:fig1}(c). 
	(b) Projection of the full periodic solution into $(a_2,x_2)$-space. 
	(c) Important curves in the $(a_2,b_2)$-plane.}
\end{figure}

\begin{remark}
We observe that the unbounded structure of the critical manifold for $\delta>0$, $b_2<\xi$ 
resembles closely the autocatalator model \cite{GucwaSzmolyan,GuckenheimerScheper,KuehnUM}. 
Although we shall not need this observation for the types of periodic orbits considered
here, it is likely to be very important for fast dynamics close to the three-dimensional 
submanifold $\{2a_2b_2=1\}$.
\end{remark}

Proposition \ref{prop:second_chart_prop} already shows that we have to expect several cases 
for the dynamics in a neighbourhood of $\{b_2=\xi\}$. For moderate $\delta$ bounded away from zero
and independent of $\epsilon$, we expect the dynamics to be governed by a jump near a fold. 
Although it is relevant to observe this mechanism, we shall not discuss this case here
as it does not occur for the parameter sets considered by Olsen. For sufficiently small 
$\delta$, the transcritical singularity is expected to be relevant and the two 
limiting cases are a jump very close to a transcritical point and canard case with 
maximal delay.\medskip

We start with the singular limit case $\delta=0$. Then, we find that 
\benn
\cS^{a-}_{2,0}=\{(a_2,b_2,x_2,y_2)\in \bar{\cD}:x_2=0=y_0,b_2<\xi\}. 
\eenn
In this case, the slow subsystem on $\cS^{a-}_{2,0}$ is given by 
\be
\label{eq:Olsen1_bu_sf_simple}
\begin{array}{rcl}
\frac{da_2}{ds}&=& \mu-\alpha a_2 ,\\
\frac{db_2}{ds}&=& \epsilon_b.\\
\end{array}
\ee 
The $a_2$-nullcline is $\{a_2=\mu/\alpha\}$. Since we consider $\epsilon_b>0$ as a fixed
parameter we can limit our discussion to the case $a^*<a_2(0)<\mu/\alpha$ here. For
certain types of MMOs we would need $\epsilon_b\ra0$; this case is discussed in Section 
\ref{sec:outlook}. For a given initial condition $(a_2(s_0),b_2(s_0))$ the slow 
subsystem \eqref{eq:Olsen1_bu_sf_simple} can be solved explicitly
\be
\label{eq:plane_sf_solved}
a_2(s)=\frac{\mu}{\alpha}+e^{-\alpha (s-s_0)}
\left(a_2(s_0)-\frac{\mu}{\alpha}\right),\qquad b_2(s)=\epsilon_b (s-s_0)+b_2(s_0).
\ee
It is also interesting to see how $\cC_{2,0}$ asymptotically depends 
upon $\delta$ in the limit $\delta\ra 0$. 

\begin{lem}
\label{lem:delta_bxi}
For $b_2<\xi$ and $\delta\ra 0$ the attracting manifold $\cS_{2,0}^{a-}$ 
is given by
\be
\label{eq:attract1_expand}
\begin{array}{lcl}
x_2&=&\frac{1}{\xi-b_2}\delta+
\frac{1-2a_2b_2}{(1+a_2b_2)(b_2-\xi)^3}\delta^2+\cO(\delta^3),\\
y_2&=&\frac{1}{(1+a_2b_2)(b_2-\xi)^3}\delta^2+\cO(\delta^3),\\
\end{array}
\ee
and the repelling manifold $\cS_{2,0}^{a-}$ is given by
\be
\begin{array}{lcl}
x_2&=&\frac{(1+a_2b_2)(\xi-b_2)}{2a_2b_2-1}+\frac{1}{b_2-\xi}\delta
+\frac{2a_2b_2-1}{(1+a_2b_2)(b_2-\xi)^3}\delta^2+\cO(\delta^3),\\
y_2&=&\frac{(1+a_2b_2)(b_2-\xi)^2}{(1-2a_2b_2)^2}+\frac{2}{1-2a_2b_2}\delta
-\frac{1}{(1+a_2b_2)(b_2-\xi)^2}\delta^2+\cO(\delta^3).\\
\end{array}
\ee
\end{lem}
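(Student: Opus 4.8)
The plan is to recognize that both branches of $\cC_{2,0}$ are, by definition, solutions of the algebraic system obtained by setting the right-hand sides of the two fast equations in \eqref{eq:Olsen1_bu} to zero, and to obtain the claimed expansions by regular perturbation in $\delta$ about the explicitly known $\delta=0$ branches. First I would write the defining equations in the convenient form already recorded in Proposition \ref{prop:second_chart_prop}, namely
\benn
x_2^2+x_2(\xi-b_2)-\delta-a_2b_2\bigl(2x_2^2+x_2(b_2-\xi)+\delta\bigr)=0,\qquad
y_2=\frac{2x_2^2+x_2(b_2-\xi)+\delta}{3},
\eenn
where for the purposes of this lemma $a_2$ and $b_2$ are treated as fixed parameters (they vary only on the slow time scale). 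The key point is that the $\delta=0$ limit yields a quadratic in $x_2$ whose two roots are exactly the leading terms of the two branches: $x_2=0$ (the attracting sheet $\cS^{a-}_{2,0}$) and $x_2=\frac{(1+a_2b_2)(\xi-b_2)}{2a_2b_2-1}$ (the sheet quoted as repelling), which matches the factorization already used for $l_2^\delta$.

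Next I would substitute a power-series ansatz $x_2=\sum_{k\ge 1}c_k\delta^k$ for the attracting branch (no constant term, since $x_2\to 0$) and $x_2=x_2^{(0)}+\sum_{k\ge 1}d_k\delta^k$ for the repelling branch, plug into the quadratic, and solve order by order in $\delta$. At order $\delta^1$ the linear term in $x_2$ dominates and one reads off $c_1=\frac{1}{\xi-b_2}$ for the attracting branch; continuing to order $\delta^2$ and using $b_2<\xi$ (so $\xi-b_2\neq 0$, which keeps the linearization nondegenerate) produces $c_2=\frac{1-2a_2b_2}{(1+a_2b_2)(b_2-\xi)^3}$. The same scheme applied near the nonzero root gives $d_1$ and $d_2$ for the repelling branch. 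Finally I would obtain the $y_2$-expansions by inserting the computed $x_2$-series into the explicit formula $y_2=\frac{1}{3}\bigl(2x_2^2+x_2(b_2-\xi)+\delta\bigr)$ and collecting powers of $\delta$; for the attracting branch the $\cO(\delta)$ and linear-in-$x_2$ contributions cancel against $+\delta$, leaving a genuinely $\cO(\delta^2)$ leading term, which explains why $y_2$ starts one order higher than $x_2$.

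The only genuinely delicate point is justifying that a convergent (or at least asymptotic) power series in $\delta$ actually exists and represents the true branches, rather than just formally solving the recursion. This follows from the implicit function theorem applied to the quadratic constraint: away from the fold set $\{x_2=l_2^\delta\}$ the partial derivative of the constraint with respect to $x_2$ is nonzero (this is precisely the normal hyperbolicity recorded in Proposition \ref{prop:second_chart_prop} for $a_2\neq 1/(2\xi)$), so each simple root depends analytically on $\delta$ near $\delta=0$, and the Taylor coefficients are exactly those produced by the order-by-order matching. I would therefore state the IFT/normal-hyperbolicity justification first, cite Proposition \ref{prop:second_chart_prop} for the nondegeneracy away from $a_2=1/(2\xi)$, and then present the coefficient computation as a routine verification; the bookkeeping in the denominators (powers of $(b_2-\xi)$ and of $1-2a_2b_2$) is where sign and exponent errors are most likely, so that is the step I would check most carefully against the stated formulas.
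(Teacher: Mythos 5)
Your approach is correct and is essentially the computation the paper leaves implicit: the lemma is stated without proof, and the only available route is exactly the regular perturbation expansion you describe of the two roots of the quadratic $(1-2a_2b_2)x_2^2+(1+a_2b_2)(\xi-b_2)x_2-(1+a_2b_2)\delta=0$ obtained from the fast nullclines, with the implicit-function-theorem/normal-hyperbolicity remark (valid away from $a_2=1/(2\xi)$, i.e.\ away from the double root) justifying that the formal series are genuine asymptotic expansions of the branches. One bonus of actually carrying out your $y_2$-computation for the attracting branch: it yields $y_2=\delta^2/\bigl[(1+a_2b_2)(b_2-\xi)^2\bigr]+\cO(\delta^3)$, a square rather than the cube printed in \eqref{eq:attract1_expand} (as it must be, since $y_2\geq 0$ in $\bar{\cD}$), so the stated exponent appears to be a typo rather than a flaw in your method.
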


Normal hyperbolicity of the critical manifold breaks down along the entire
critical manifold $\cC_{2,0}$ once it reaches the hyperplane $b_2=\xi$. Note
that the singularity at $(a_2,b_2)=(1/(2\xi),\xi)$ is again particularly degenerate
and we exclude the set
\be
\label{eq:bad_region}
\cB(\upsilon):=\left\{(a_2,b_2,x_2,y_2)\in\bar{\cD}:
\left(a_2-\frac{1}{2\xi}\right)^2+(b_2-\xi)^2\leq \upsilon\right\}
\ee
for some small $\upsilon>0$ from our analysis as both types of periodic orbits we 
construct do not have a passage at $(a_2,b_2)=(1/(2\xi),\xi)$ in the singular limit 
$\epsilon=0$. As before, we have to make a case
distinction. We assume that orbits only approach a neighborhood of $\{b_2=\xi\}$
via the attracting slow manifolds $\cS^{a-}_{2,0}$; see also Figure \ref{fig:fig5}.
In the next Section \ref{sec:delay} we will consider this case (C1) so that
$2a_1^*b_1^*<1$, $b_1^*<\xi$. For a different case, leading to MMOs, we refer to 
Section \ref{sec:outlook}.

\section{Transcritical Singularities}
\label{sec:delay}

The slow flow \eqref{eq:Olsen1_bu_sf_simple} implies that trajectories reach
the curve of fold points 
\benn
\cL_{2,0}^\delta=\{(a_2,b_2,x_2,y_2)\in\cC_{2,0}:x_2=l^\delta_2(a_2,b_2,\xi)\},
\eenn
which degenerates into a line of transcritical points 
\benn
\cL_{2,0}^0=\{(a_2,b_2,x_2,y_2)\in\cC_{2,0}:x_2=0=y_2,b_2=\xi\}
\eenn
for $\delta=0$. As in Section \ref{ssec:chart2}, we shall view $\delta$ as a 
parameter for now. The goal is to compute the unfolding of the degenerate line 
$\{b_2=\xi\}$ away from the region $\cB(\upsilon)$. In fact, Lemma \ref{lem:delta_bxi} 
already indicates that near $b_2=\xi$, $x_2=0=y_2$ and $\delta=0=\epsilon$ a much 
finer analysis is necessary. In particular, consider the system
\be
\label{eq:Olsen1_bu_scale_translate_main}
\begin{array}{rcl}
\dot{x}_2&=& 3a_2b_2y_2-x_2^2+(b_2-\xi)x_2+\delta,\\
\dot{a}_2&=& \epsilon^2(\mu-\alpha a_2 -a_2b_2y_2),\\
\dot{b}_2&=& \epsilon^2\epsilon_b(1-bx_2 -a_2b_2y_2),\\
\dot{\epsilon}&=& 0,\\
\dot{\delta}&=& 0,\\
\dot{y}_2&=& \kappa(x_2^2-y_2-a_2b_2y_2).\\
\end{array}
\ee 
The $6$-dimensional flow has to be simplified via center manifold reduction near
a line of degenerate equilibrium points 
\benn
\cL:=\{(x_2,a_2,b_2,\epsilon,\delta,y_2)\in\R^6:
x_2=0,a_2=a_0,b_2=\xi,\epsilon=0,\delta=0,y_2=0\}
\eenn
parametrized by $a_0$. The necessary calculations are recorded 
in Appendix \ref{ap:cm2}. 

\begin{prop}
\label{prop:CM2}
For $a_0\neq \frac{1}{2\xi}$, there exists a five-dimensional center
manifold $\cM_2$ for \eqref{eq:Olsen1_bu_scale_translate_main} near
$\cL$. The flow on $\cM_{2}$ is given by
\be
\label{eq:Olsen1_CM2_main}
\begin{array}{rcl}
\epsilon^2\frac{dx_2}{ds}&=& c_2x_2^2+c_1(b_2)x_2+c_0+\cO(3),\\
\frac{da_2}{ds}&=& \mu-\alpha a_0+\cO(2),\\
\frac{db_2}{ds}&=& \epsilon_b+\cO(2),\\
\end{array}
\ee   
with $\frac{d\epsilon}{ds}=0=\frac{d\delta}{ds}$ understood and
$\cO(2)$, $\cO(3)$ denote higher-order terms of order two and
three respectively. Furthermore, the coefficients $c_i$ for
$i=\{0,1,2\}$ are
\benn
\begin{array}{lcl}
c_2&=&\left(\frac{2a_0\xi-1}{1+a_0\xi}\right)>0,\\
c_1(b_2)&=&\left(\frac{-\delta}{\kappa(1+a_0\xi)^2}+b_2-\xi\right),\\
c_0&=& \delta+\frac{\delta^2}{\kappa^2(1+a_0\xi)^3}.\\
\end{array}
\eenn
\end{prop}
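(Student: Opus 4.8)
The plan is to treat \eqref{eq:Olsen1_bu_scale_translate_main} as an autonomous six-dimensional system in $(x_2,a_2,b_2,\epsilon,\delta,y_2)$ and apply the center manifold theorem along the line of equilibria $\cL$. First I would check directly that every point of $\cL$ is an equilibrium (all six right-hand sides vanish at $x_2=0$, $b_2=\xi$, $\epsilon=\delta=y_2=0$) and then compute the Jacobian $J$ along $\cL$. The only nontrivial rows are the $x_2$- and $y_2$-equations: one finds $\partial_{y_2}\dot{y}_2=-\kappa(1+a_0\xi)$, $\partial_\delta\dot{x}_2=1$, $\partial_{y_2}\dot{x}_2=3a_0\xi$, while all remaining linear parts vanish because the $a_2$- and $b_2$-equations carry a prefactor $\epsilon^2$ and $\dot{\epsilon}=\dot{\delta}=0$. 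Hence the spectrum of $J$ is $\{-\kappa(1+a_0\xi)\}$ together with a five-fold eigenvalue $0$; since $a_0,\xi>0$ the nonzero eigenvalue is strictly negative and there is no unstable direction. (The entry $\partial_\delta\dot{x}_2=1$ produces a single $2\times2$ Jordan block at the zero eigenvalue, the nilpotent coupling that encodes the unfolding of the transcritical line by $\delta$.) This spectral splitting into a five-dimensional center, a one-dimensional exponentially attracting stable direction, and no unstable part is exactly the hypothesis of the reduction principle, and yields a locally invariant, attracting five-dimensional center manifold $\cM_2$ near $\cL$.

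Next I would write $\cM_2$ as a graph $y_2=h(x_2,a_2,b_2,\epsilon,\delta)$ tangent to the center eigenspace along $\cL$; since that eigenspace is $\{y_2=0\}$, the function $h$ and all its first partials vanish on $\cL$, so $h$ is quadratic in the local coordinates. The $a_2$- and $b_2$-equations on $\cM_2$ are then immediate by substituting $y_2=h$ into \eqref{eq:Olsen1_bu_scale_translate_main}: because $h$ is higher order, $\tfrac{da_2}{ds}=\mu-\alpha a_0+\cdots$ and $\tfrac{db_2}{ds}=\epsilon_b+\cdots$ up to the indicated higher-order remainders, which are the lower two lines of \eqref{eq:Olsen1_CM2_main}. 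The real content is concentrated in the reduced $x_2$-equation, obtained by substituting $h$ into $\dot{x}_2=3a_2b_2y_2-x_2^2+(b_2-\xi)x_2+\delta$.

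To compute $h$ I would solve the invariance equation $\kappa(x_2^2-(1+a_2b_2)h)=h_{x_2}\dot{x}_2+h_{a_2}\dot{a}_2+h_{b_2}\dot{b}_2$ order by order in $(x_2,b_2-\xi,\delta,a_2-a_0,\epsilon)$. The quasi-steady-state leading term $h=x_2^2/(1+a_2b_2)+\cdots$ already produces, after substitution and evaluation at $(a_0,\xi)$, the quadratic coefficient $c_2=(2a_0\xi-1)/(1+a_0\xi)$, while the explicit terms $(b_2-\xi)x_2$ and $\delta$ in $\dot{x}_2$ supply the leading parts of $c_1(b_2)$ and $c_0$; the $\delta$-dependent correction to $c_1$ and the $\delta^2$-term in $c_0$ come from the next orders of $h$ in $\delta$, read off from the $x_2^0$- and $x_2^1$-coefficients of the invariance equation. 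Undoing the fast-time scaling (restoring the $\epsilon^2$ prefactor of \eqref{eq:Olsen1_bu}) then gives the quadratic, Riccati-type equation $\epsilon^2\tfrac{dx_2}{ds}=c_2x_2^2+c_1(b_2)x_2+c_0+\cO(3)$. The hypothesis $a_0\neq1/(2\xi)$ enters exactly here: it guarantees $c_2\neq0$, so the reduced flow is a genuine quadratic (fold/transcritical) unfolding rather than a degenerate one, and fixes the sign $c_2>0$ in the regime of interest; note also that $c_1(b_2)$ changes sign at $b_2=\xi$, which is the expected exchange of stability.

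The \textbf{main obstacle} is the order-by-order bookkeeping of the center-manifold expansion required to capture $c_1$ to $\cO(\delta)$ and $c_0$ to $\cO(\delta^2)$: one must carry $h$ to the correct mixed order in $x_2$ and $\delta$, track the nilpotent $x_2$–$\delta$ coupling, and verify that the $\cO(\epsilon^2)$ feedback from $\dot{a}_2,\dot{b}_2$ into the invariance equation does not affect these coefficients but only the $\cO(3)$ remainder. This is the lengthy computation I would relegate to Appendix~\ref{ap:cm2} (as the paper does); the conceptual steps—spectral splitting, existence of $\cM_2$ via the reduction principle, and reduction through the invariance equation—are routine given the earlier lemmas.
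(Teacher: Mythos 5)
Your proposal is correct and follows essentially the same route as the paper's Appendix~\ref{ap:cm2}: identify the line of equilibria, verify the spectral splitting into a quintuple zero eigenvalue and the single stable eigenvalue $-\kappa(1+a_0\xi)$, invoke the center manifold theorem along the whole line, and determine the graph $y_2=h(\cdot)$ order by order from the invariance equation before substituting back into the $x_2$-equation. The only cosmetic difference is that the paper first applies a linear conjugation (the matrix $M$) to decouple the $3a_0\xi$ entry and put the system in standard block form before writing the invariance equation, whereas you work directly with the graph over $\{y_2=0\}$, which is legitimate since that is exactly the generalized center eigenspace.
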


For $\delta=0$ the system \eqref{eq:Olsen1_CM2_main} has a line of
transcritical singularities at $b_2=\xi$, as expected. The degenerate
singularity at $a_0=1/(2\xi)$ also appears and causes a sign change
of the $x_2^2$-term, which explains why we restrict to dynamics outside
of $\cB(\upsilon)$. We still have the invariant manifolds
$\cS^{a\pm}_{2,\epsilon}$ and $\cS^{r\pm}_{2,\epsilon}$
(up to higher-order correction terms). Then we define 
\be
\label{eq:delta_scale}
\hat\epsilon:=\epsilon^2\qquad \text{ and }\qquad 
\hat\delta:=\delta/\epsilon^2=\delta(\sqrt{\hat{\epsilon}})/\hat{\epsilon}. 
\ee
Note that we may always assume that $\hat{\delta}$ is bounded, 
{i.e.}~$\hat{\delta}=\cO(1)$ as $\epsilon\ra 0$, due to the assumptions
in Theorem \ref{thm:main_intro} in both of the two main cases as either 
$\delta=\cO(e^{-K_1/\epsilon})$ or $\delta=K_2\epsilon^2$. Using 
\eqref{eq:delta_scale} in \eqref{eq:Olsen1_CM2_main} yields
\be
\label{eq:Olsen1_CM2_main_trunc}
\begin{array}{rcl}
\hat\epsilon\frac{dx_2}{ds}&=&f_2(x_2,a_0,b_2;\hat\epsilon),\\
\frac{da_2}{ds}&=& \mu-\alpha a_0+\cO(2),\\
\frac{db_2}{ds}&=& \epsilon_b+\cO(2).\\
\end{array}
\ee   
where the fast variable vector field is given by
\bea
f_2(x_2,a_0,b_2;\hat\epsilon)&=&\nonumber
\left(\frac{2a_0\xi-1}{1+a_0\xi}\right)x_2^2+
\left(\frac{-\hat\epsilon\hat\delta}{\kappa(1+a_0\xi)^2}+
b_2-\xi\right)x_2\\
&&+\hat\epsilon\hat\delta+
\frac{\hat{\delta}^2\hat{\epsilon}^2}{\kappa^2(1+a_0\xi)^3}+\cO(3).
\label{eq:fast_tc_vf}
\eea
The two normally hyperbolic critical manifolds of \eqref{eq:Olsen1_CM2_main_trunc}
\be
\label{eq:def_slow_C2}
\begin{array}{lcl}
\cS_{2,0}^{r+}&:=&\{f_2(x_2,a_0,b_2;0)=0,x_2=0,b_2>\xi\},\\
\cS_{2,0}^{a-}&:=&\{f_2(x_2,a_0,b_2;0)=0,x_2=0,b_2<\xi\},\\
\end{array}
\ee
are relevant in what follows; note that the definitions agree with
the critical manifolds defined in \eqref{eq:def_C2_first} near the 
transcritical singularity. Next, fix some $a_0$ such that 
\be
\label{eq:tc_away_bad}
a_0>1/(2\xi).
\ee
Consider the system \eqref{eq:Olsen1_CM2_main_trunc}. The differential equation
for $a_2$ does not enter into the local analysis of the unfolding; this can be seen
by applying a change of coordinates 
\benn
a_2=(\mu-\alpha a_0)(\tilde{a}_2+\tilde{b}_2), \qquad b_2=\epsilon_b\tilde{b}_2, 
\eenn
which implies $\tilde{a}_2'=0+\cO(2)$ and and we can focus on 
\be
\label{eq:Olsen1_CM2_main_trunc1}
\begin{array}{rcl}
\hat\epsilon\frac{dx_2}{ds}&=&f_2(x_2,a_0,\epsilon_b \tilde{b}_2;\hat\epsilon)
=:f(x_2,a_0,\tilde{b}_2;\hat{\epsilon}),\\
\frac{d\tilde{b}_2}{ds}&=& 1+\cO(2).\\
\end{array}
\ee 
In principle, we would have to apply another blow-up to \eqref{eq:Olsen1_CM2_main_trunc1}
to unfold the transcritical singularity at $\{x_2=0,\tilde{b}_2=\xi/\epsilon_b\}$. However, 
we are in the fortunate situation that several relevant results are already known for the 
planar transcritical singularity. Here we follow the results from \cite{KruSzm4}. The first
step is to check whether suitable genericity and transversality conditions 
[see (2.2)-(2.3) in \cite{KruSzm4}] hold. The usual transversality condition for the slow variable 
holds trivially since $\frac{d\tilde{b}_2}{ds}=1$. To state the genericity conditions, we use a notational 
simplification and let $p_{\textnormal{tc}}:=(0,a_0,\xi/\epsilon_b;0)$, then the conditions 
[(2.2), \cite{KruSzm4}] are given by
\be
\label{eq:cond_tc_generic}
f(p_{tc})=0,\quad \frac{\partial f}{\partial x_2}(p_{\textnormal{tc}})=0,
\quad \frac{\partial f}{\partial \tilde{b}_2}(p_{\textnormal{tc}})=0,\quad 
\det((D^2f)(p_{\textnormal{tc}}))<0,\quad \frac{\partial^2 f}{\partial x_2^2}(p_{\textnormal{tc}})\neq0.
\quad 
\ee
where $D^2f\in\R^{2\times 2}$ denotes Hessian matrix with respect to the variables $(x_2,\tilde{b}_2)$.
It is easy to check that the conditions \eqref{eq:cond_tc_generic} hold.
Near a transcritical point, three main dynamical regimes may occur. To distinguish these cases, one
possibility is to calculate a particular constant $\lambda_{\textnormal{tc}}$. To define it, it is 
helpful to consider the following auxiliary constants
\benn
c_{xx}:=\frac12 \frac{\partial^2 f}{\partial x_2^2}(p_{\textnormal{tc}}),\quad 
c_{xb}:=\frac12 \frac{\partial^2 f}{\partial x_2 \partial \tilde{b}_2}(p_{\textnormal{tc}}),\quad 
c_{bb}:=\frac12 \frac{\partial^2 f}{\partial \tilde{b}_2^2}(p_{\textnormal{tc}}),\quad 
c_{\epsilon}:=\frac{\partial f}{\partial \hat{\epsilon}}(p_{\textnormal{tc}}). 
\eenn
Then the key constant $\lambda_{\textnormal{tc}}$ [Lemma 2.1, \cite{KruSzm4}], in the context
of \eqref{eq:Olsen1_CM2_main_trunc1}, is given by
\beann
\lambda_{\textnormal{tc}}&=&\frac{1}{\sqrt{c_{xb}^2-c_{bb} c_{xx}}}
\left(c_\epsilon c_{xx}+c_{xb}\right)\\
&=& \frac{1}{\sqrt{\epsilon_b^2-0\cdot c_{xx}}}
\left(\frac{\hat{\delta}}{2}\cdot\frac{2a_0\xi-1}{1+a_0\xi}+\epsilon_b\right).
\eeann
The next result explains the case distinction from Theorem \ref{thm:main_intro}.

\begin{prop}
\label{prop:tc_canards}
Consider \eqref{eq:Olsen1_CM2_main_trunc} and suppose \eqref{eq:tc_away_bad} holds. 
Let $\gamma_{\hat\epsilon}=\gamma_{\hat\epsilon}(s)$ denote a trajectory of 
\eqref{eq:Olsen1_CM2_main_trunc} starting at some $s_0$ exponentially close 
to $\cS^{a-}_{2,\epsilon}$.
Then there exists $\hat{\epsilon}_0>0$ such that for all 
$\hat{\epsilon}\in(0,\hat{\epsilon}_0]$ the following cases may occur
\begin{enumerate}
 \item \textbf{Canard case}: Suppose $\hat{\delta}=\cO(e^{-K_1/\hat{\epsilon}})$ 
 and $K_1>0$ is some fixed constant independent of $\hat{\epsilon}$. Then 
 $\gamma_{\hat{\epsilon}}$ has a canard segment {i.e.}~it is $\cO(\hat{\epsilon})$-close 
 to $\cS^{r+}_{2,0}$ for a time $s^*=\cO(1)$, $s^*>0$ as $\epsilon\ra 0$.
 \item \textbf{(Transcritical) jump case:} Suppose $\hat\delta=K_2$, 
 $K_2>0$ and $K_2$ is fixed as $\hat{\epsilon}\ra 0$. Then 
 $\gamma_{\hat{\epsilon}}$ does not have a canard segment and 
 leaves $\{x_2=0,y_2=0\}$ in an $\hat{\epsilon}$-dependent neighbourhood 
 $\tilde{\cN}(\hat{\epsilon})$ of $\{\tilde{b}_2=\xi/\epsilon_b\}$ such that 
 $d_{\textnormal{H}}(\tilde{\cN}(\epsilon),\{b_2=\xi/\epsilon_b\})\ra 0$ 
 as $\epsilon\ra 0$. 
\end{enumerate}
\end{prop}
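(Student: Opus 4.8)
The plan is to reduce the statement entirely to the planar transcritical analysis of Krupa and Szmolyan \cite{KruSzm4}, applied to the reduced system \eqref{eq:Olsen1_CM2_main_trunc1}. The groundwork is already in place: the genericity conditions \eqref{eq:cond_tc_generic} and the trivial transversality $\frac{d\tilde{b}_2}{ds}=1$ put us exactly into the setting of \cite{KruSzm4} [conditions (2.2)--(2.3)], and the decisive constant $\lambda_{\textnormal{tc}}$ has been computed. First I would record the geometry of the two critical branches of the fast subsystem $\hat\epsilon\,x_2'=f_2(x_2,a_0,b_2;\hat\epsilon)$: the branch $x_2=0$ carries $\cS^{a-}_{2,0}$ (attracting, $b_2<\xi$) and its continuation $\cS^{r+}_{2,0}$ (repelling, $b_2>\xi$), while the second branch $x_2=-c_1(b_2)/c_2+\cO(\hat\epsilon)$ carries $\cS^{r-}_{2,0}$ and $\cS^{a+}_{2,0}$; the two branches cross transversally at $p_{\textnormal{tc}}$ precisely when $\hat\delta=0$. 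This identifies the incoming orbit $\gamma_{\hat\epsilon}$, which enters along $\cS^{a-}_{2,\epsilon}$, with the canonical attracting-slow-manifold trajectory of \cite{KruSzm4}.

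Next I would read $\hat\delta$ as the unfolding parameter of the transcritical crossing. Through $c_0=\hat\epsilon\hat\delta+\cO(\hat\epsilon^2)$, a positive $\hat\delta$ splits the crossing into an avoided crossing whose fold sits where $c_1(b_2)^2=4c_2c_0$, i.e.\ at $b_2-\xi=\cO(\sqrt{\hat\epsilon\hat\delta})$. Whether \cite{KruSzm4} predicts a canard or a jump is then decided by $\lambda_{\textnormal{tc}}=1+\frac{\hat\delta}{2\epsilon_b}\cdot\frac{2a_0\xi-1}{1+a_0\xi}$, which under \eqref{eq:tc_away_bad} satisfies $\lambda_{\textnormal{tc}}\geq1$ with equality exactly at $\hat\delta=0$, and which depends on $\hat\delta$ only.

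For the canard case I would use that $\hat\delta=\cO(e^{-K_1/\hat\epsilon})$ forces $c_0$ and the $\hat\delta$-contribution to $c_1$ to be exponentially small, so that \eqref{eq:Olsen1_CM2_main_trunc1} is exponentially close to the exact transcritical normal form and $\lambda_{\textnormal{tc}}=1+\cO(e^{-K_1/\hat\epsilon})$. The extension result of \cite{KruSzm4} then continues $\cS^{a-}_{2,\epsilon}$ through $p_{\textnormal{tc}}$ along the repelling branch $\cS^{r+}_{2,0}$, which it shadows to within $\cO(\hat\epsilon)$. The length $s^*$ of this canard segment is set by the way-in--way-out balance, of which $\lambda_{\textnormal{tc}}$ is precisely the contraction-to-expansion ratio: since $\gamma_{\hat\epsilon}$ enters at an $\cO(1)$ distance below $b_2=\xi$ and $\lambda_{\textnormal{tc}}\to1$, the exit occurs an $\cO(1)$ distance above, giving $s^*=\cO(1)$, $s^*>0$ as $\hat\epsilon\to0$. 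For the jump case, $\hat\delta=K_2$ makes $c_0=K_2\hat\epsilon$ non-negligible at the $\hat\epsilon$-scale: the fast equation loses its equilibria near $x_2=0$ once $c_1(b_2)^2<4c_2c_0$, i.e.\ for $|b_2-\xi|<\cO(\sqrt{\hat\epsilon})$, so $\cS^{a-}_{2,\epsilon}$ terminates at a genuine fold, the repelling branch is never reached, and the fold/jump statement of \cite{KruSzm4} ejects $\gamma_{\hat\epsilon}$ from $\{x_2=0=y_2\}$ inside a neighbourhood $\tilde{\cN}(\hat\epsilon)$ of $\{\tilde{b}_2=\xi/\epsilon_b\}$ with $d_{\textnormal{H}}(\tilde{\cN}(\hat\epsilon),\{\tilde{b}_2=\xi/\epsilon_b\})\to0$ as $\hat\epsilon\to0$.

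The main obstacle I anticipate is verifying that the $\cO(2)$ and $\cO(3)$ remainders produced by the center-manifold reduction of Proposition \ref{prop:CM2} do not corrupt the leading-order transcritical normal form, uniformly over the whole range of $\hat\delta$ considered, so that the hypotheses of \cite{KruSzm4}, and in particular the sign conditions $c_2>0$ and $\det(D^2f)<0$, hold with constants independent of $\hat\epsilon$. Closely tied to this is the delay estimate in the canard case, where one must check that the super-exponentially small forcing $c_0=\hat\epsilon\hat\delta$ competes correctly with the exponentially small separation of $\cS^{a-}_{2,\epsilon}$ from the critical manifold, so that the canard genuinely persists for an $\cO(1)$ time rather than breaking early; this is where the precise limit $\lambda_{\textnormal{tc}}\to1$ and the extension estimates of \cite{KruSzm4} must be invoked with care.
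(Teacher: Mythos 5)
Your jump case is essentially the paper's argument: verify the Krupa--Szmolyan genericity conditions, compute $\lambda_{\textnormal{tc}}=1+\frac{K_2}{2\epsilon_b}\cdot\frac{2a_0\xi-1}{1+a_0\xi}>1$ using \eqref{eq:tc_away_bad}, and invoke [Theorem 2.1(a), \cite{KruSzm4}] to get the jump near the transcritical point; the paper also notes in passing that exchange of stability is excluded because $\lambda_{\textnormal{tc}}\geq 1$ always. That part is fine.

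The canard case has a genuine gap. You write that $\lambda_{\textnormal{tc}}=1+\cO(e^{-K_1/\hat{\epsilon}})$ and that ``the extension result of \cite{KruSzm4} then continues $\cS^{a-}_{2,\epsilon}$ through $p_{\textnormal{tc}}$ along the repelling branch.'' But the extension theorems of \cite{KruSzm4} cover the two generic regimes $\lambda>1$ (jump) and $\lambda<1$ (exchange of stability); the borderline $\lambda=1$ is precisely the degenerate case where the conclusion depends on structure beyond the normal-form coefficients, and no general theorem there hands you an $\cO(1)$-length canard. Being exponentially close to $\lambda=1$ does not by itself prevent the trajectory from jumping after only an $o(1)$ excursion onto the repelling branch. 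The missing idea --- and the one the paper uses --- is that for $\hat{\delta}=0$ the submanifold $\{x_2=0=y_2\}$ is \emph{exactly invariant} for \eqref{eq:Olsen1_bu_scale_translate_main}, hence $\{x_2=0\}$ is invariant for \eqref{eq:Olsen1_CM2_main_trunc}; consequently $\cS^{a-}_{2,\epsilon}$ and $\cS^{r+}_{2,\epsilon}$ both lie in this invariant plane and coincide, giving a maximal canard exactly, and for $\hat{\delta}=\cO(e^{-K_1/\hat{\epsilon}})$ the two slow manifolds remain exponentially close, which is what produces the $\cO(1)$ canard segment. You correctly flag the competition between the exponentially small forcing and the exponentially small splitting as the delicate point in your final paragraph, but you do not supply the invariance observation that resolves it, and the reference you lean on does not do it for you. (The quantitative length of the delay is then handled separately in the paper via the way-in/way-out function in Proposition \ref{prop:delay1} and Corollary \ref{cor:delay1}, not inside this proposition.)
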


\begin{proof}
Basically, the result follows from [Theorem 2.1, \cite{KruSzm4}] and the observation
that for $\hat{\delta}=\cO(e^{-K_1/\hat{\epsilon}})$ the two manifolds 
$\cS_{2,\epsilon}^{a-}$ and $\cS_{2,\epsilon}^{r+}$ are exponentially close. More
precisely, we start with the jump case and observe that
\benn
\lambda_{\textnormal{tc}}=1+\frac{K_2}{2\epsilon_b}\cdot\frac{2a_0\xi-1}{1+a_0\xi}>1
\eenn
since $2a_0\xi-1>0$, $K_2>0$, $1+a_0\xi>0$ and $\epsilon_b>0$. This implies that
we are in the situation of [Theorem 2.1(a), \cite{KruSzm4}], which implies that a 
jump occurs near the transcritical point as described in the result we want to prove.
The exchange-of-stability case [Theorem 2.1(b), \cite{KruSzm4}] does not occur as
we always have $\lambda_{\textnormal{tc}}\geq 1$. For the canard case, observe that 
for $\hat{\delta}=0$ the submanifold $\{x_2=0=y_2\}$ is invariant for 
\eqref{eq:Olsen1_bu_scale_translate_main}, and hence $\{x_2=0\}$ is invariant for
\eqref{eq:Olsen1_CM2_main_trunc}. Therefore, when 
$\hat{\delta}=\cO(e^{-K_1/\hat{\epsilon}})$ holds, it follows that 
$\cS_{2,\epsilon}^{a-}$ and $\cS_{2,\epsilon}^{r+}$ are exponentially close. This 
yields the result for the canard case.
\end{proof}

For the jump case, there is no further analysis required. Proposition 
\ref{prop:tc_canards} implies for this case that we can use the coordinates
\be
\label{eq:ic_cond_jump}
(a_2(s_1),b_2(s_1),y_2(s_1))=
\left(\frac\mu\alpha+e^{-\frac{\alpha}{\epsilon_b}(\xi-b_2(s_0))}
\left(a_2(s_0)-\frac\mu\alpha\right),\xi,0\right)
\ee
as initial conditions, up to an $\epsilon$-dependent error term, for the 
slow flow on the critical manifold $\cC_0$ on which large loops occur. 
This regime is considered in Section \ref{sec:loops}.\medskip

For the canard case, Proposition \ref{prop:tc_canards} implies that
trajectories exponentially close to $\cS_{2,\epsilon}^{a-}$ will locally 
experience maximal delay \cite{Neishtadt1}. However, to verify that there
is also global maximal delay, we have to investigate the fast subsystem 
of \eqref{eq:Olsen1_bu_scale_translate_main} linearized around 
$\{x_2=0=y_2\}$. This linearized system is given by
\be
\label{eq:Olsen1_bu_scale_fss_linear}
\frac{d}{ds}\left(\begin{array}{c}
X_2\\
Y_2\\
\end{array}\right)
=\underbrace{\left(\begin{array}{cc}
(b_2-\xi) & 3a_2b_2,\\
0& -(1+a_2b_2),\\
\end{array}\right)}_{=:A_{\textnormal{fs}}}
\left(\begin{array}{c} X_2\\ Y_2\\
\end{array}\right).
\ee 
The next result follows from a direct calculation.

\begin{lem}
\label{lem:fss_lin_canard}
The matrix $A_{\textnormal{fs}}$ has eigenvalues 
$\lambda_{\textnormal{fs},1}=b_2-\xi$, 
$\lambda_{\textnormal{fs},2}=-(1+a_2b_2)$ with associated eigenvectors
\benn
\Lambda_{\textnormal{fs},1}=\left(\begin{array}{c}
1\\
0\\
\end{array}\right)\qquad \text{and}\qquad 
\Lambda_{\textnormal{fs},2}=\left(\begin{array}{c}
-\frac{3a_2b_2}{1+b_2+a_2b_2-\xi}\\
1\\
\end{array}\right).
\eenn
Furthermore, $\lambda_{\textnormal{fs},1}$ is the critical eigenvalue 
under suitable conditions {i.e.}
\begin{itemize}
\item[(E1)] $0\geq\lambda_{\textnormal{fs},1}>\lambda_{\textnormal{fs},2}$ 
as long as $1+a_2b_2+b_2>\xi$, $b_2\leq \xi$,
\item[(E2)] $0<\lambda_{\textnormal{fs},1}<|\lambda_{\textnormal{fs},2}|$
as long as $b_2> \xi$, $-1-a_2b_2+b_2<\xi$.
\end{itemize}
\end{lem}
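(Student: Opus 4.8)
The plan is to exploit the upper-triangular structure of $A_{\textnormal{fs}}$, which renders the spectral data essentially immediate. Since the $(2,1)$-entry vanishes, the characteristic polynomial factors as $(\lambda-(b_2-\xi))(\lambda+(1+a_2b_2))$, so the eigenvalues can be read off the diagonal as $\lambda_{\textnormal{fs},1}=b_2-\xi$ and $\lambda_{\textnormal{fs},2}=-(1+a_2b_2)$. For the eigenvectors I would solve $(A_{\textnormal{fs}}-\lambda_{\textnormal{fs},i}I)v=0$ in each case. For $\lambda_{\textnormal{fs},1}$ the first column of $A_{\textnormal{fs}}-\lambda_{\textnormal{fs},1}I$ is zero, so $\Lambda_{\textnormal{fs},1}=(1,0)^T$ spans the kernel. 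For $\lambda_{\textnormal{fs},2}$ the second row vanishes and the first row reads $(1+b_2+a_2b_2-\xi)v_1+3a_2b_2v_2=0$; normalizing $v_2=1$ yields the stated $\Lambda_{\textnormal{fs},2}$.

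The one point requiring a remark is that the formula for $\Lambda_{\textnormal{fs},2}$ has $1+b_2+a_2b_2-\xi$ in the denominator, and this quantity is precisely $\lambda_{\textnormal{fs},1}-\lambda_{\textnormal{fs},2}$; it vanishes exactly when the two eigenvalues coincide and the eigenvector becomes ill-defined. I would therefore check that this degeneracy is excluded in both regimes: in (E1) the hypothesis $1+a_2b_2+b_2>\xi$ makes the denominator positive outright, and in (E2) the hypothesis $b_2>\xi$ together with $a_2,b_2>0$ forces $1+a_2b_2+b_2>\xi$ as well, so the eigenvector is well-defined throughout the relevant parameter range.

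It remains to verify the criticality comparisons, which reduce to elementary sign inequalities between the two diagonal entries. In (E1), $b_2\leq\xi$ gives $\lambda_{\textnormal{fs},1}=b_2-\xi\leq0$, and $\lambda_{\textnormal{fs},1}>\lambda_{\textnormal{fs},2}$ rearranges to $b_2-\xi>-(1+a_2b_2)$, {i.e.}~$1+a_2b_2+b_2>\xi$, the stated hypothesis. In (E2), $b_2>\xi$ gives $\lambda_{\textnormal{fs},1}>0$ while $\lambda_{\textnormal{fs},2}=-(1+a_2b_2)<0$ so $|\lambda_{\textnormal{fs},2}|=1+a_2b_2$; then $\lambda_{\textnormal{fs},1}<|\lambda_{\textnormal{fs},2}|$ rearranges to $b_2-\xi<1+a_2b_2$, {i.e.}~$-1-a_2b_2+b_2<\xi$, again the stated hypothesis.

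There is no genuine obstacle here: the triangular form trivializes both the eigenvalue and the eigenvector computation. The only care needed is the conceptual bookkeeping that identifies $\lambda_{\textnormal{fs},1}$, the eigenvalue that changes sign as $b_2$ crosses $\xi$, as the critical eigenvalue governing the exchange of stability, and the verification that in each regime it is the one nearest zero (bounded below by $\lambda_{\textnormal{fs},2}$ in (E1) and above by $|\lambda_{\textnormal{fs},2}|$ in (E2))---which is exactly what conditions (E1) and (E2) encode.
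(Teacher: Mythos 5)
Your proposal is correct and matches the paper's approach: the paper simply states that the lemma ``follows from a direct calculation,'' and your triangular-matrix computation of the eigenvalues, the kernel computations for the eigenvectors, and the sign comparisons for (E1)--(E2) are exactly that calculation. Your added remark that the denominator $1+b_2+a_2b_2-\xi$ equals $\lambda_{\textnormal{fs},1}-\lambda_{\textnormal{fs},2}$ and is nonvanishing in both regimes is a nice touch not made explicit in the paper.
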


For standard Olsen parameter values we have $\xi=0.98$. All candidate 
orbits we are going to construct below are going to satisfy the 
conditions on $a_2,b_2$ (respectively $a,b$) globally. Therefore,  
the eigendirection associated to $\lambda_{\textnormal{fs},1}$ is 
critical, in the sense that the eigenvalue $\lambda_{\textnormal{fs},1}$
is also the weak eigenvalue compared to $\lambda_{\textnormal{fs},2}$.\medskip

We may now return to the calculation of the delay time for the canard case.
The delay time depends upon the initial conditions using the way-in way-out 
function \cite{Neishtadt2}  
\benn
\Pi(\gamma_{0}(s)):=\int_{s_0}^s (D_{x_2}f_2)(\gamma_{0}(\eta);0)~d\eta.
\eenn
where $f_2$ is the fast vector field variable from \eqref{eq:Olsen1_CM2_main_trunc}. 
Note that $\Pi(\gamma_0(s_0))=0$. 

\begin{prop}
\label{prop:delay1}
Suppose \eqref{eq:tc_away_bad} holds. Assume $\hat\delta(\hat\epsilon)=\cO(e^{-K_1/\hat\epsilon})$ and 
$\gamma_{\hat\epsilon}(s_0)$ is in an $\cO(1)$-neighborhood and the 
fast-flow basin of attraction for $\cS_{2,\epsilon}^{a-}$. Suppose 
$\gamma_{0}(s^*)\in\cL$ for some $s^*>s_0$ then for times $s$ with 
\benn
s_0+\cO(\hat\epsilon\ln\hat\epsilon)\leq s \leq \Pi(s_1)+\cO(\hat\epsilon\ln\hat\epsilon)
\eenn
the trajectory $\gamma_{\hat\epsilon}(s)$ is in an $\cO(\hat\epsilon)$-neighborhood 
of $\cS_{2,0}^{r+}$ where $\Pi(s_1)=0$ and $s_0<s^*<s_1$.
\end{prop}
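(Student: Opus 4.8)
The plan is to upgrade the purely local delay of Proposition~\ref{prop:tc_canards} to a global tracking statement by feeding the spectral data of Lemma~\ref{lem:fss_lin_canard} into an entry--exit (way-in way-out) estimate. Proposition~\ref{prop:tc_canards} only asserts that the canard follows $\cS^{r+}_{2,0}$ for an $\cO(1)$ time in a fixed neighbourhood of the transcritical line; Proposition~\ref{prop:delay1} claims that, to within $\cO(\hat\epsilon)$, this tracking persists all the way to the globally determined balance time $s_1$ fixed by $\Pi(\gamma_0(s_1))=0$. First I would reduce the transverse dynamics to a single critical direction. By Lemma~\ref{lem:fss_lin_canard}, along any candidate orbit meeting (E1)--(E2) the fast block linearised at $\{x_2=0=y_2\}$ decouples into a strongly contracting $y_2$-direction, whose eigenvalue $\lambda_{\textnormal{fs},2}=-(1+a_2b_2)$ is uniformly bounded away from zero, and the weak $x_2$-direction with eigenvalue $\lambda_{\textnormal{fs},1}=b_2-\xi$. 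Because the strong direction contracts even on the repelling branch $b_2>\xi$, it can never drive an escape, and since $(D_{x_2}f_2)(\gamma_0(\eta);0)=b_2(\eta)-\xi=\lambda_{\textnormal{fs},1}$ the integrand of $\Pi$ is precisely this critical rate. The delay is therefore governed by the one-dimensional slow passage $\hat\epsilon\,\dot x_2=(b_2-\xi)x_2+\cO(x_2^2,\hat\epsilon)$.

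Second I would run the entry--exit estimate, following Neishtadt's theorem on delayed loss of stability \cite{Neishtadt1,Neishtadt2}. Linearising the $x_2$-equation gives a transverse deviation that evolves like $\cO(1)\cdot\exp\!\big(\Pi(\gamma_0(s))/\hat\epsilon\big)$. Using the explicit slow solution \eqref{eq:plane_sf_solved}, $b_2(\eta)=b_2(s_0)+\epsilon_b(\eta-s_0)$, so $\Pi(\gamma_0(s))=\int_{s_0}^{s}(b_2(\eta)-\xi)\,d\eta$ vanishes at $s_0$, decreases while $b_2<\xi$, attains its minimum at $s^*$ where $b_2=\xi$, and increases back to zero at the balance time $s_1$ with $s_0<s^*<s_1$. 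Hence the $\cO(1)$ entry distance collapses to $\cO(\hat\epsilon)$ after an attracting layer of length $\cO(\hat\epsilon\ln\hat\epsilon)$, remains $\cO(\hat\epsilon)$ while $\Pi(\gamma_0(s))\le0$, and only regrows to $\cO(1)$ as $s\ra s_1$. Since Proposition~\ref{prop:second_chart_prop} places $\cS^{a-}_{2,\epsilon}$ and $\cS^{r+}_{2,\epsilon}$ within $\cO(\hat\epsilon)$ of their singular counterparts, $\gamma_{\hat\epsilon}(s)$ is then $\cO(\hat\epsilon)$-close to $\cS^{r+}_{2,0}$ on the asserted window, the $\cO(\hat\epsilon\ln\hat\epsilon)$ endpoint corrections being the usual entry and exit boundary layers. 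For $\hat\delta=\cO(e^{-K_1/\hat\epsilon})$ the set $\{x_2=0=y_2\}$ is invariant at $\hat\delta=0$ (the canard case of Proposition~\ref{prop:tc_canards}), so $\cS^{a-}_{2,\epsilon}$ and $\cS^{r+}_{2,\epsilon}$ are exponentially close and the exponentially small gap cannot corrupt the entry--exit balance.

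The hard part is establishing that the delay is genuinely \emph{global} rather than merely local. One must preclude any premature departure caused either by the strong $y_2$-direction or by an approach to the degenerate point $(a_2,b_2)=(1/(2\xi),\xi)$, where $c_2$ changes sign and the whole reduction degenerates. This is exactly why hypothesis \eqref{eq:tc_away_bad} keeps $a_0>1/(2\xi)$ away from $\cB(\upsilon)$, and why Lemma~\ref{lem:fss_lin_canard} must be invoked along the entire candidate orbit: it certifies that $\lambda_{\textnormal{fs},1}$ stays the unique weak eigenvalue while $\lambda_{\textnormal{fs},2}$ remains uniformly bounded away from zero over the full $\cO(1)$-length repelling segment. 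Turning this uniform spectral gap into a uniform $\cO(\hat\epsilon)$ bound across the whole passage --- rather than only in a shrinking neighbourhood of the transcritical line, where Proposition~\ref{prop:tc_canards} alone would apply --- is the technical crux, and it is where Neishtadt's global way-in way-out machinery, together with the explicit monotone slow drift of $b_2$, does the essential work.
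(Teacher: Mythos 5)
Your overall strategy coincides with the paper's: isolate the critical (weak) eigenvalue $\lambda_{\textnormal{fs},1}=b_2-\xi$ via Lemma \ref{lem:fss_lin_canard}, run an entry--exit estimate for the slow passage, and read off the $\cO(\hat\epsilon\ln\hat\epsilon)$ boundary layers from the way-in way-out function. The paper does exactly this, citing the exchange-lemma results of Schecter and Liu for the tracking and Neishtadt for the logarithmic corrections, so the choice of primary reference is not a substantive difference.

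There is, however, one concrete step missing, and it happens to be the only genuinely geometric input in the paper's short proof. Your estimate that the transverse deviation evolves like $\cO(1)\cdot\exp\bigl(\Pi(\gamma_0(s))/\hat\epsilon\bigr)$ presupposes that the component of the entering deviation along the weak eigendirection $\Lambda_{\textnormal{fs},1}$ is uniformly of order one. If the trajectory entered on (or exponentially close to) the strong stable foliation spanned by $\Lambda_{\textnormal{fs},2}$, the relevant accumulated contraction would instead be governed by $\lambda_{\textnormal{fs},2}=-(1+a_2b_2)$, which never changes sign; the exit would then not occur at the balance time $s_1$, which is precisely what Corollary \ref{cor:delay1} and the subsequent return-map construction rely on. Your observation that the strong direction ``can never drive an escape'' rules out premature departure but not this opposite degeneracy, and the hypothesis \eqref{eq:tc_away_bad} does not address it either. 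The paper closes this gap with a positivity argument: solutions remain in the positive quadrant, and under (E1) the first component of $\Lambda_{\textnormal{fs},2}$ is strictly negative, so a deviation vector with $x_2\geq 0$, $y_2\geq 0$ can never be a nonzero multiple of $\Lambda_{\textnormal{fs},2}$; consequently its projection onto $\Lambda_{\textnormal{fs},1}$ is bounded below, $\lambda_{\textnormal{fs},1}$ is genuinely the critical eigenvalue, and the cited entry--exit results apply. With that verification added, your argument reproduces the paper's proof.
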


\begin{proof}
Since solutions do not leave the positive quadrant and we start in $\bar{\cD}$, a trajectory 
can never enter the stable invariant submanifold which is tangent to the local stable eigenspace
associated to $\lambda_{\textnormal{fs},2}$ since $\Lambda_{\textnormal{fs},2}$ has always 
one negative component. Furthermore, the eigenvalue $\lambda_{\textnormal{fs},1}$ is always
critical so that we may apply a previous result [Theorem 2.4, \cite{Schecter5}; 
see also \cite{Liu5} where the result was proven first]. The logarithmic corrections of the 
transition time, calculated using the way-in way-out function, is a direct consequence of 
the calculation in \cite{Neishtadt2}.
\end{proof}

In analogy to the jump case \eqref{eq:ic_cond_jump}, we also want to compute the 
departure point $\gamma_0(s_1)$ in $\cS_{2,0}^{r+}$ for the canard case in the 
singular limit. In view of Proposition \ref{prop:delay1}, we have to evaluate 
the way-in way-out function.  

\begin{cor}
\label{cor:delay1}
Under the same assumptions as in Proposition \ref{prop:delay1} the point 
$\gamma_0(s_1)$ is given by  
\be
\label{eq:jump_after_delay}
\gamma_0(s_1)=\left(\frac\mu\alpha+e^{-\alpha(2/\epsilon_b(\xi-b_2(s_0)))}\left(a_2(s_0)
-\frac\mu\alpha\right),2\xi-b_2(s_0),0,0\right).
\ee
\end{cor}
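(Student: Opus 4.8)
The plan is to make Proposition \ref{prop:delay1} quantitative by evaluating the way-in way-out function $\Pi$ explicitly along the singular candidate $\gamma_0$ and solving $\Pi(s_1)=0$ for the delayed exit time $s_1$. The key preliminary observation is that in the singular limit with $\hat\delta=0$ the submanifold $\{x_2=0=y_2\}$ is invariant for \eqref{eq:Olsen1_bu_scale_translate_main}; hence $\gamma_0$ stays on $\{x_2=0=y_2\}$ during the entire canard passage, its $x_2$- and $y_2$-components vanish identically (accounting for the last two entries of \eqref{eq:jump_after_delay}), and its $(a_2,b_2)$-components are governed exactly by the reduced slow flow \eqref{eq:Olsen1_bu_sf_simple} with closed-form solution \eqref{eq:plane_sf_solved}.

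First I would identify the integrand of $\Pi$. Differentiating the reduced fast field $f_2$ from \eqref{eq:fast_tc_vf} with respect to $x_2$ and evaluating at $\hat\epsilon=0$ on $\{x_2=0\}$ gives $(D_{x_2}f_2)(\gamma_0(\eta);0)=b_2(\eta)-\xi$, which is exactly the critical eigenvalue $\lambda_{\textnormal{fs},1}$ of Lemma \ref{lem:fss_lin_canard}; its sign encodes attraction for $b_2<\xi$ and repulsion for $b_2>\xi$. Substituting $b_2(\eta)=\epsilon_b(\eta-s_0)+b_2(s_0)$ from \eqref{eq:plane_sf_solved}, the integrand is affine in $\eta$, so the integration is immediate and yields the quadratic $\Pi(\gamma_0(s))=\frac{\epsilon_b}{2}(s-s_0)^2+(b_2(s_0)-\xi)(s-s_0)$. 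Setting $\Pi(s_1)=0$ and discarding the trivial entry root $s_1=s_0$ gives the delay $s_1-s_0=2(\xi-b_2(s_0))/\epsilon_b$.

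Finally I would read off the departure point by inserting this $s_1$ into \eqref{eq:plane_sf_solved}. This produces $b_2(s_1)=\epsilon_b(s_1-s_0)+b_2(s_0)=2\xi-b_2(s_0)$ and $a_2(s_1)=\frac{\mu}{\alpha}+e^{-\alpha(2/\epsilon_b(\xi-b_2(s_0)))}\bigl(a_2(s_0)-\frac{\mu}{\alpha}\bigr)$, together with $x_2(s_1)=0=y_2(s_1)$, which is precisely \eqref{eq:jump_after_delay}. There is no genuine obstacle here beyond Proposition \ref{prop:delay1} itself: the only point requiring care is the justification that a single critical eigenvalue $b_2-\xi$ controls $\Pi$ throughout the passage, which is guaranteed by the invariance of $\{x_2=0=y_2\}$ at $\hat\delta=0$ and by the eigenvalue ordering in Lemma \ref{lem:fss_lin_canard} identifying $\lambda_{\textnormal{fs},1}$ as the weak direction; the remaining work is the elementary explicit integration above.
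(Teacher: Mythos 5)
Your proposal is correct and follows essentially the same route as the paper: evaluate $D_{x_2}f_2(\gamma_0(\eta);0)=b_2(\eta)-\xi$ along the invariant set $\{x_2=0=y_2\}$, integrate the affine integrand using \eqref{eq:plane_sf_solved} to get the quadratic $\tfrac{\epsilon_b}{2}(s-s_0)^2+(b_2(s_0)-\xi)(s-s_0)$, solve $\Pi(s_1)=0$ for the nontrivial root $s_1=s_0+2(\xi-b_2(s_0))/\epsilon_b$, and substitute back into the slow-flow solution. The extra remarks you add about the critical eigenvalue and the invariance of $\{x_2=0=y_2\}$ are consistent with Lemma \ref{lem:fss_lin_canard} and Proposition \ref{prop:delay1} and do not change the argument.
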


\begin{proof}
A direct calculation yields
\beann
\int_{s_0}^s D_{x_2}f_2(\gamma_0(\eta);0)~d\eta&=&\int_{s_0}^s (b_2(\eta)-\xi) d\eta\\
&=&\int_{s_0}^s (\epsilon_b\eta+b_2(s_0)-\xi) d\eta\\
&=&\epsilon_b\frac12(s-s_0)^2+(b_2(s_0)-\xi)(s-s_0).
\eeann
For $s_1>s_0$ we find that $\Pi(\gamma_0(s_1))=0$ if 
$s_1=2/\epsilon_b(\xi-b_2(s_0))+s_0$. Using the solution 
\eqref{eq:plane_sf_solved} of the slow subsystem 
\eqref{eq:Olsen1_bu_sf_simple} and substituting the result 
for $s_1$ yields \eqref{eq:jump_after_delay}.
\end{proof}

Proposition \ref{prop:tc_canards} and Corollary \ref{cor:delay1} imply that we can use 
the coordinates
\be
\label{eq:ic_cond_canard}
(a_2(s_1),b_2(s_1),y_2(s_1))=
\left(\frac\mu\alpha+e^{-\alpha(2/\epsilon_b(\xi-b_2(s_0)))}
\left(a_2(s_0)-\frac\mu\alpha\right),2\xi-b_2(s_0),0\right)
\ee
as initial condition, up to an $\epsilon$-dependent error term, in the canard case 
for the slow flow on the critical manifold $\cC_0$ on which large loops occur. 
This regime is considered in Section \ref{sec:loops}.

\section{Large Loops}
\label{sec:loops}

We return to the analysis of the slow flow on the critical manifold $\cC_0$
from Section \eqref{sec:tr_res}. The flow is given by
\be
\label{eq:Olsen2_sf_big}
\begin{array}{rcl}
\frac{da}{d\tau}&=& -aby,\\
\frac{db}{d\tau}&=& -\epsilon_baby,\\
\frac{dy}{d\tau}&=&\kappa(2ab-1) y.\\
\end{array}
\ee

\begin{prop}
\label{prop:large}
The slow flow \eqref{eq:Olsen2_sf_big} is solved by
\be
\label{eq:solve_large}
b=\epsilon_b a+K_1\qquad \text{and} \qquad 
y=K_2+\kappa \left(-2 a+\frac{\ln a}{K_1}-\frac{\ln(K_1+a \epsilon_b)}{K_1}\right)
\ee
for constants $K_{1,2}$ to be determined from the initial conditions.
\end{prop}

\begin{proof}
From the first two equations in \eqref{eq:Olsen2_sf_big} it follows that 
\benn
\frac{db}{da}=\epsilon_b \qquad \Rightarrow \quad b=\epsilon_b a+K_1
\eenn
for some constant $K_1$. Inserting this result in the first and 
third equation of \eqref{eq:Olsen2_sf_big} yields
\be
\label{eq:Ysolve}
\frac{dy}{da}=-\kappa \frac{2a(\epsilon_ba+K_1)-1}{a(\epsilon_b a+K_1)}
=-2\kappa +\frac{\kappa}{a(\epsilon_b a+K_1)}.
\ee
The result follows upon solving \eqref{eq:Ysolve} explicitly.
\end{proof}

Proposition \ref{prop:large} resolves the global large return dynamics
for $x,y$. However, we still need to consider the solution \eqref{eq:solve_large}
in even more detail. It is going to be helpful to extract as much relevant
information from the explicit solution \eqref{eq:solve_large} analytically
as possible to prove results about oscillations in the Olsen model. The main
case we are interested in is an initial condition 
\be
\label{eq:starting_point_large}
(a(0),b(0),y(0))=(\alpha_1,\beta_1,0)
\ee
corresponding to a singular loop starting at a fold point. The 
choice of subscripts in \eqref{eq:starting_point_large} will become clear
in Section \ref{sec:candidate}. We are most
interested in the two cases when \eqref{eq:starting_point_large} is either
given by \eqref{eq:ic_cond_jump} for the jump case, or by \eqref{eq:ic_cond_canard}
in the canard case.\medskip

From \eqref{eq:starting_point_large} and \eqref{eq:solve_large} it follows
that $\beta_1-\epsilon_b \alpha_1=K_1$. Furthermore, for \eqref{eq:Ysolve} the
initial condition is $y(\alpha_1)=0$. Direct calculations yield
\be
\label{eq:sol_loop}
y(a)=\frac{\kappa}{\beta_1-\epsilon_b \alpha_1}\left[2(a-\alpha_1)
(\alpha_1\epsilon_b-\beta_1)+
\ln\left(\frac{\beta_1a}{\alpha_1(\beta_1+\epsilon_b(a-\alpha_1))}\right)\right],
\ee
where we have to assume $\alpha_1(\beta_1+\epsilon_b(a-\alpha_1))\neq 0$; this 
last assumption will always satisfied for standard Olsen parameter values in the region
of interest for candidate orbits we want to construct. Indeed, note that we always 
have positive $a,\alpha_1=\cO(1)$ and $b>b^*$ is bounded away from zero by a suitable 
constant $b^*>0$.\medskip 

Now observe carefully that the singular (or candidate) loops are restricted to
a family of invariant lines upon projection into the $(a,b)$-plane
\be
\label{eq:inv_lines}
\{(a,b,x,y)\in \cD:x=0=y,b=\epsilon_ba+\beta_1-\epsilon_b\alpha_1\}.
\ee
We collect some important information on the function $y(a)$.

\begin{lem}
\label{lem:returns}
Considering \eqref{eq:sol_loop} we have
\benn
y'(a)=\frac{(1 - 2 a (\beta_1 + (a - \alpha_1) \epsilon_b) \kappa}
{a (\beta_1 + (a - \alpha_1) \epsilon_b)},\qquad \text{so that 
$y'(\alpha_1)=\frac{\kappa(1-2\alpha_1\beta_1)}{\alpha_1\beta_1}$.} 
\eenn
Assume $\beta_1-\alpha_1\epsilon_b>0$ and standard parameter values 
then $y(a)$ has local extrema at
\benn
a_{\pm}=\frac{2 \alpha_1 \epsilon_b -2 \beta_1 \pm 
\sqrt{8 \epsilon_b + (2 \alpha_1 \epsilon_b-2 \beta_1 )^2}}{4 \epsilon_b}
\eenn
with $a_+>0$, $a_-<0$, $y(a_+)\geq0$. For $a\in[0,+\I)$ one finds
that $a_{+}$ is a global maximum, $a_+<\alpha_1$ for $2\alpha_1\beta_1-1>0$,
$y(a_+)=0$ if and only if $2\alpha_1\beta_1-1=0$ and $y(a_+)>0$ for
$2\alpha_1\beta_1-1\neq0$. Furthermore, we have the asymptotics
\benn
\lim_{a\ra 0^+} y(a)=-\I \qquad \text{and} \qquad \lim_{a\ra+\I}y(a)=-\I.
\eenn
\end{lem}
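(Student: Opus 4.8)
The plan is to avoid differentiating the cumbersome closed form \eqref{eq:sol_loop} and instead read $y'(a)$ off directly from the separable ODE \eqref{eq:Ysolve}. Writing $D:=\beta_1-\epsilon_b\alpha_1=K_1$ and $P(a):=\beta_1+\epsilon_b(a-\alpha_1)=\epsilon_b a+D$, equation \eqref{eq:Ysolve} gives $y'(a)=\kappa\bigl(-2+\tfrac{1}{aP(a)}\bigr)=\frac{\kappa(1-2aP(a))}{aP(a)}$, which is exactly the claimed expression; evaluating at $a=\alpha_1$, where $P(\alpha_1)=\beta_1$, yields $y'(\alpha_1)=\kappa(1-2\alpha_1\beta_1)/(\alpha_1\beta_1)$. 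First I would note that, since $D>0$ (the standing hypothesis $\beta_1-\alpha_1\epsilon_b>0$) and $\epsilon_b>0$, we have $P(a)=\epsilon_b a+D>0$ for every $a\ge0$, so the denominator $aP(a)$ is positive for $a>0$ and the sign of $y'$ is governed entirely by the numerator $1-2aP(a)$.

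Next I would locate the critical points. Setting $1-2aP(a)=0$ produces the quadratic $2\epsilon_b a^2+2Da-1=0$, whose roots are $a_\pm=\frac{-2D\pm\sqrt{4D^2+8\epsilon_b}}{4\epsilon_b}$; using $-2D=2\alpha_1\epsilon_b-2\beta_1$ reproduces the displayed formula for $a_\pm$. Because the product of the roots equals $-1/(2\epsilon_b)<0$ and the discriminant exceeds $(2D)^2$, exactly one root is positive and one negative, giving $a_+>0>a_-$. Viewing $g(a):=2aP(a)=2\epsilon_b a^2+2Da$ as an upward parabola with $g(a_\pm)=1$, one gets $g(a)<1$ on $(a_-,a_+)$ and $g(a)>1$ outside; restricting to $a>0$ this means $y'>0$ on $(0,a_+)$ and $y'<0$ on $(a_+,\infty)$, so $a_+$ is the unique, hence global, maximum on $[0,\infty)$.

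I would then tie the peak back to $\alpha_1$ via $y(\alpha_1)=0$ and the sign of $y'(\alpha_1)$. If $2\alpha_1\beta_1-1>0$ then $y'(\alpha_1)<0$, placing $\alpha_1$ to the right of the maximum, i.e. $a_+<\alpha_1$. For the peak value, observe that $\alpha_1=a_+$ precisely when $y'(\alpha_1)=0$, i.e. $2\alpha_1\beta_1=1$, in which case $y(a_+)=y(\alpha_1)=0$; if instead $2\alpha_1\beta_1\neq1$ then $\alpha_1\neq a_+$, and strict monotonicity on each side of the global maximum forces $y(a_+)>y(\alpha_1)=0$. This simultaneously yields $y(a_+)\ge0$, the equality characterisation, and the strict positivity statement. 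The two limits follow from the competition between the linear and logarithmic terms in \eqref{eq:sol_loop}: with $D>0$ the prefactor $\kappa/D>0$; as $a\to0^+$ the linear piece $2(a-\alpha_1)(\alpha_1\epsilon_b-\beta_1)=-2D(a-\alpha_1)$ tends to a finite limit while $\ln\bigl(\beta_1 a/(\alpha_1 P(a))\bigr)\to-\infty$, and as $a\to+\infty$ the linear piece $\to-\infty$ while the logarithm approaches the finite value $\ln(\beta_1/(\alpha_1\epsilon_b))$, so in both regimes the bracket diverges to $-\infty$ and $y\to-\infty$.

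The hard part will not be any single estimate but the careful bookkeeping of signs, all of which rest on $D=\beta_1-\epsilon_b\alpha_1>0$ together with $\epsilon_b,\kappa>0$: these are what guarantee $P(a)>0$, fix the sign of $\kappa/D$, and make the logarithmic term subdominant to the linear term at infinity. I expect the most delicate point to be the ``if and only if'' for $y(a_+)=0$, since it is the one step that genuinely uses the global (rather than merely local) maximum structure of $y$ on $[0,\infty)$ instead of a single derivative computation.
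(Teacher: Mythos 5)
Your proposal is correct, and it fills in exactly the direct calculation that the paper leaves implicit (Lemma \ref{lem:returns} is stated without a written proof, as an elementary consequence of \eqref{eq:Ysolve} and \eqref{eq:sol_loop}). Reading $y'(a)$ off the separable ODE \eqref{eq:Ysolve} rather than differentiating \eqref{eq:sol_loop}, reducing the critical points to the quadratic $2\epsilon_b a^2+2Da-1=0$, and using the sign of $y'(\alpha_1)$ together with $y(\alpha_1)=0$ and the unimodality of $y$ on $(0,\infty)$ to settle the peak-value claims is precisely the intended argument, and your sign bookkeeping under $D=\beta_1-\epsilon_b\alpha_1>0$ is accurate throughout.
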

 
We restrict to loops for $y(\alpha_1)=0$ when $2\alpha_1\beta_1-1>0$ based 
upon the results in Section \ref{sec:delay}. From Lemma \ref{lem:returns}
it follows that there exists another zero $\alpha_2$ such that $y(\alpha_2)=0$
and $\alpha_2<a_+<\alpha_1$. Therefore, Lemma \ref{lem:returns} provides a 
rigorous justification for the trajectories shown in Figure \ref{fig:fig7}(a) which
make large excursions, with a single maximum, in $\cC_0$. To compute the
landing point $\alpha_2$ we must solve the equation  
\be
\label{eq:main_sol_problem}
y(\alpha_2)=0\quad \Leftrightarrow \quad 2(\alpha_2-\alpha_1)
(\beta_1-\alpha_1\epsilon_b)=
\ln\left(\frac{\beta_1\alpha_2}{\alpha_1(\beta_1+\epsilon_b(\alpha_2-\alpha_1))}\right)
\ee
which is transcendental and the solutions cannot be given in closed form. Despite
this problem one can still use \eqref{eq:main_sol_problem} to construct candidate orbits.

\section{Construction of Candidate Orbits}
\label{sec:candidate}

In this section we construct global candidate orbits for the jump case and the canard 
case. Since there are two different starting points for the large loops to consider, 
{i.e.}~either \eqref{eq:ic_cond_jump} or \eqref{eq:ic_cond_canard}, we subdivide the 
following discussion into two cases. 

\subsection{A Canard Candidate}
\label{ssec:canard_candidate}

The canard case is more difficult so we shall discuss it first, and in more detail. 
The candidate orbit we aim to construct consists of a concatenation of a slow flow
segment defined by \eqref{eq:Olsen1_bu_sf_simple} on the time scale $s$ with maximal
delay and a fast segment on the time scale $\tau=\epsilon^{-2}s$ for \eqref{eq:Olsen2}. 
Both segments are constructed in the singular limit for $\epsilon=0$ with $\epsilon_b>0$.
The fast segment is itself a slow segment for the subsystem \eqref{eq:Olsen2_sf_big} on
the attracting critical manifold $\cC_0$; see also Figure \ref{fig:fig7}(a).\medskip

Let $(\alpha_0,\beta_0)$ denote an initial condition for \eqref{eq:Olsen1_bu_sf_simple}
with $2\alpha_0\beta_0<1$ and $\beta_0<\xi$. By Corollary \ref{cor:delay1} the
maximal delay point $(\alpha_1,\beta_1)$ is given by 
\bea
\alpha_1&=&\frac\mu\alpha+e^{-\alpha(2/\epsilon_b(\xi-\beta_0))}
\left(\alpha_0-\frac\mu\alpha\right),\label{eq:cand1}\\
\beta_1&=&2\xi-\beta_0.\label{eq:cand2}
\eea
Augmenting this point by the trivial condition $y=0$ gives $(\alpha_1,\beta_1,0)=(a,b,y)$
which is the initial condition for the slow flow \eqref{eq:Olsen2_sf_big} governing
the large loop. By Proposition \ref{prop:large} and equation \eqref{eq:main_sol_problem}
the conditions 
\bea
0&=&2(\alpha_2-\alpha_1)(\beta_1-\alpha_1\epsilon_b)
-\ln\left(\frac{\beta_1\alpha_2}{\alpha_1(\beta_1+\epsilon_b(\alpha_2-\alpha_1))}\right),
\label{eq:cand3}\\
\beta_1&=&\epsilon_b \alpha_1+\beta_2-\epsilon_b\alpha_2\label{eq:cand4}
\eea
follow, where \eqref{eq:cand4} is the requirement to lie in a single invariant line
\eqref{eq:inv_lines}. For a periodic candidate orbit we must have 
\be
\label{eq:cand_periodic}
(\alpha_0,\beta_0)\stackrel{!}{=}(\alpha_2,\beta_2).
\ee
Substituting \eqref{eq:cand_periodic} into \eqref{eq:cand1}-\eqref{eq:cand4} yields
a nonlinear system of four algebraic equations in four unknowns
$(\alpha_0,\beta_0,\alpha_1,\beta_1)$. 

\begin{lem}
\label{lem:key_algebra1}
The system \eqref{eq:cand1}-\eqref{eq:cand_periodic} can be simplified to a single
algebraic equation for $\beta_0$ given by
\bea
\label{eq:b0_equation}
0&=&4(\beta_0-\xi)(\epsilon_b\mu-\alpha\xi)+4(\beta_0-\xi)w_c(\beta_0)+\\
&&\alpha\epsilon_b \ln\left[\frac{(2\xi-\beta_0)(\beta_0\alpha+\epsilon_b\mu
-\alpha\xi+w_c(\beta_0))}{\beta_0 \left(\epsilon_b\mu-\alpha \beta_0+\alpha\xi
+w_c(\beta_0)\right)}\right]=:W_c(\beta_0)\nonumber
\eea
where $w_c(\beta_0):=\alpha(\beta_0-\xi)
\coth\left[\frac{\alpha(\xi-\beta_0)}{\epsilon_b}\right]$.
\end{lem}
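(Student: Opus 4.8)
The plan is to eliminate the three auxiliary unknowns $\alpha_0,\alpha_1,\beta_1$ one at a time, leaving a single equation in $\beta_0$, and then to match it term-by-term with \eqref{eq:b0_equation}. First I would impose the periodicity condition \eqref{eq:cand_periodic}, which sets $\alpha_2=\alpha_0$ and $\beta_2=\beta_0$ throughout \eqref{eq:cand1}--\eqref{eq:cand4}. Equation \eqref{eq:cand2} then eliminates $\beta_1=2\xi-\beta_0$ outright, and \eqref{eq:cand4} becomes $2\xi-\beta_0=\epsilon_b\alpha_1+\beta_0-\epsilon_b\alpha_0$, i.e.\ the linear relation $\alpha_1-\alpha_0=2(\xi-\beta_0)/\epsilon_b$.

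The second relation comes from the delay map \eqref{eq:cand1}. Writing $E:=e^{-2\alpha(\xi-\beta_0)/\epsilon_b}$, equation \eqref{eq:cand1} reads $\alpha_1=\tfrac\mu\alpha(1-E)+E\alpha_0$, so that $\alpha_1-\alpha_0=(1-E)(\tfrac\mu\alpha-\alpha_0)$. Equating the two expressions for $\alpha_1-\alpha_0$ solves for $\alpha_0$, and back-substitution gives $\alpha_1$, both as explicit functions of $\beta_0$ alone:
\[
\alpha_0=\frac\mu\alpha-\frac{2(\xi-\beta_0)}{\epsilon_b(1-E)},\qquad
\alpha_1=\frac\mu\alpha-\frac{2E(\xi-\beta_0)}{\epsilon_b(1-E)}.
\]
After this, $\alpha_0,\alpha_1,\beta_1$ are all known in terms of $\beta_0$, and the single surviving equation \eqref{eq:cand3} is already an equation in $\beta_0$; the remaining work is purely to recast it into the stated form.

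The bridge to \eqref{eq:b0_equation} is the hyperbolic identity $\coth\theta=(1+E)/(1-E)$ with $\theta=\alpha(\xi-\beta_0)/\epsilon_b$, which lets me rewrite $w_c(\beta_0)=\alpha(\beta_0-\xi)\tfrac{1+E}{1-E}$. A short computation then shows that the two combinations inside the logarithm of \eqref{eq:b0_equation} collapse onto the unknowns, namely $\beta_0\alpha+\epsilon_b\mu-\alpha\xi+w_c=\epsilon_b\alpha\,\alpha_0$ and $\epsilon_b\mu-\alpha\beta_0+\alpha\xi+w_c=\epsilon_b\alpha\,\alpha_1$. Since $\beta_1+\epsilon_b(\alpha_0-\alpha_1)=\beta_0$, the logarithm in \eqref{eq:cand3} equals exactly $\ln\!\big((2\xi-\beta_0)\alpha_0/(\beta_0\alpha_1)\big)$, which matches the log term of \eqref{eq:b0_equation} after multiplying \eqref{eq:cand3} through by $-\alpha\epsilon_b$.

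It then remains to check that the polynomial prefactor transforms correctly. Multiplying \eqref{eq:cand3} by $-\alpha\epsilon_b$ and using $\alpha_0-\alpha_1=2(\beta_0-\xi)/\epsilon_b$ turns $2(\alpha_2-\alpha_1)(\beta_1-\alpha_1\epsilon_b)$ into $-4\alpha(\beta_0-\xi)(\beta_1-\alpha_1\epsilon_b)$; after dividing by $4(\beta_0-\xi)$ one must verify the scalar identity $-\alpha(\beta_1-\alpha_1\epsilon_b)=\epsilon_b\mu-\alpha\xi+w_c$, which reduces, again via the $\coth$ identity and with $u:=\xi-\beta_0$, to $-u\big(1+\tfrac{2E}{1-E}\big)=-u\,\tfrac{1+E}{1-E}$. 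I expect the main obstacle to be precisely this bookkeeping: keeping the sign of $\beta_0-\xi<0$ straight, tracking the $\alpha\epsilon_b$ rescaling that links the normalizations of \eqref{eq:cand3} and \eqref{eq:b0_equation}, and above all recognizing that the exponential $E$ must be repackaged through $\coth$ so that it disappears from the final equation except through $w_c$. Everything else is routine substitution.
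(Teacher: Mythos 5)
Your proposal is correct and follows essentially the same elimination route as the paper: use \eqref{eq:cand2} to remove $\beta_1$, use \eqref{eq:cand4} to tie $\alpha_1-\alpha_0$ to $\beta_0$, solve \eqref{eq:cand1} for $\alpha_0$ (your $E$-form is exactly the paper's $\coth$ formula \eqref{eq:magic_formula} rewritten via $\coth\theta=(1+E)/(1-E)$), and substitute into \eqref{eq:cand3}. The only difference is that you make explicit the bookkeeping identities ($\epsilon_b\alpha\,\alpha_0$ and $\epsilon_b\alpha\,\alpha_1$ as the log arguments, the $-\alpha\epsilon_b$ rescaling) that the paper leaves as "lengthy, albeit quite direct, algebraic manipulations"; all of these check out.
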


\begin{proof}
Replace $\beta_1$ in \eqref{eq:cand3} and \eqref{eq:cand4} using \eqref{eq:cand2}
which only depends on $\beta_0$. Then replace $\alpha_1$ in \eqref{eq:cand3}
and \eqref{eq:cand1} using \eqref{eq:cand4}
{i.e.}~$(2\xi-2\beta_0+\epsilon_b\alpha_0)/\epsilon_b= \alpha_1$. Then notice
that \eqref{eq:cand1} can be solved for $\alpha_0$ 
\be
\label{eq:magic_formula}
\alpha_0 = \frac{\beta_0 \alpha + \epsilon_b \mu
- \alpha \xi + \alpha (\beta_0 - \xi)
\coth[\alpha (\xi-\beta_0)/\epsilon_b]}{\alpha \epsilon_b}.
\ee
Substituting \eqref{eq:magic_formula} into \eqref{eq:cand3} gives the result.   
\end{proof}

Hence we have to determine whether $W_c(\beta_0)$ has a zero, which also satisfies
the relevant constraints as an arrival point for a large loop, {i.e.}~we need 
\be
\label{eq:constraints}
\beta_0<\xi \qquad \text{and}\qquad
2\alpha_0\beta_0=\frac{2\beta_0}{\alpha\epsilon_b}\left(\beta_0\alpha
+\epsilon_b \mu -\alpha\xi +w_c(\beta_0)\right)<1.
\ee
This requires a better understanding of the function $W_c$. Having reduced the 
problem to a single algebraic equation, we could investigate $W_c$ numerically.
However, it is even possible to obtain analytical results. We view $\mu$ as 
a parameter that we may adjust to find the required root.

\begin{lem}
\label{lem:find_canard_jump}
The following properties hold:
\begin{itemize}
 \item[(P1)] $W_c(\xi)=0$.
 \item[(P2)] $\frac{dW_c}{d\beta_0}(\xi)=W_c'(\xi)=
 \frac2\xi (\epsilon_b-\epsilon_b \mu+\alpha \xi) (\alpha-2 (\mu-1) \xi)(\mu-1)^{-1}$.
 \item[(P3)] Suppose that $\epsilon_b,\alpha,\xi,\mu>0$. Then $W'_c(\xi)<0$ if and only if
 one of the following three cases holds
 \benn
 \begin{array}{cl}
 (i)\qquad &0<\mu<1,\quad \alpha>-2\xi+2\mu\xi,\\
 (ii)\qquad &1<\mu<\frac{\alpha+2\xi}{2\xi},\quad \epsilon_b>\frac{\alpha\xi}{\mu-1},\\
 (iii)\qquad &\mu>\frac{\alpha+2\xi}{2\xi},\quad0<\epsilon_b<\frac{\alpha\xi}{\mu-1}.\\
 \end{array}
 \eenn
 \end{itemize}
Consider $\mu$ as a parameter and otherwise standard parameter values from Table 
\ref{tab:tab2} ($k_1=0.41$). Then the following results hold:
\begin{itemize}
 \item[(P4)] If $0< \mu<1$ then $W_c(\beta_0)\neq0$ for $\beta_0\in(0,\xi)$.
 \item[(P5)] There exists an open set $(\mu_1,\mu_2)$ with $1<\mu_1<\mu_2$ such that 
 if $\mu\in(\mu_1,\mu_2)$ then $W_c(\beta_0^*)=0$ for some $0<\beta_0^*<\xi$.  
\end{itemize}

\end{lem}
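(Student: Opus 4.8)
The plan is to treat the five claims in order, with (P1)--(P3) being exact local computations at $\beta_0=\xi$ and (P4)--(P5) the genuinely global statements. The recurring technical point is that $w_c(\beta_0)=\alpha(\beta_0-\xi)\coth[\alpha(\xi-\beta_0)/\epsilon_b]$ is only \emph{apparently} singular at $\beta_0=\xi$: using $\coth(z)=z^{-1}+z/3+\cO(z^3)$ with $z=\alpha(\xi-\beta_0)/\epsilon_b$ gives the removable expansion
\be
w_c(\beta_0)=-\epsilon_b-\frac{\alpha^2}{3\epsilon_b}(\xi-\beta_0)^2+\cO((\xi-\beta_0)^4),
\ee
so that $w_c(\xi)=-\epsilon_b$ and $\frac{dw_c}{d\beta_0}(\xi)=0$. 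For (P1) I substitute $\beta_0=\xi$: the first two summands of $W_c$ carry the factor $(\beta_0-\xi)$ and vanish (the second because $w_c$ is finite), while the numerator and denominator inside the logarithm both reduce to $\xi\epsilon_b(\mu-1)$, so the argument equals $1$ and the logarithm vanishes. Hence $W_c(\xi)=0$.

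For (P2) I differentiate $W_c$ termwise at $\beta_0=\xi$. The first term contributes $4(\epsilon_b\mu-\alpha\xi)$; the second contributes $4w_c(\xi)=-4\epsilon_b$ since $w_c'(\xi)=0$; and the logarithmic term contributes $\alpha\epsilon_b\big(N'/N-D'/D\big)$, evaluated using $N(\xi)=D(\xi)=\xi\epsilon_b(\mu-1)$ together with $N'(\xi)=\xi\alpha-\epsilon_b(\mu-1)$ and $D'(\xi)=\epsilon_b(\mu-1)-\xi\alpha$ (both following from $P'(\xi)=\alpha$, $Q'(\xi)=-\alpha$ and $w_c'(\xi)=0$, where $P,Q$ denote the second factors of $N,D$). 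Summing and clearing denominators reproduces the stated factored form. For (P3) I read off its sign: writing the formula as $\frac{2}{\xi(\mu-1)}(\alpha\xi-\epsilon_b(\mu-1))(\alpha-2(\mu-1)\xi)$, the prefactor $2/\xi$ is positive, so the sign is governed by $(\mu-1)^{-1}$ and the two linear factors. A short case split on $\mathrm{sgn}(\mu-1)$ and on whether each linear factor is positive (thresholds $\epsilon_b=\alpha\xi/(\mu-1)$ and $\mu=(\alpha+2\xi)/(2\xi)$) yields precisely the configurations (i)--(iii) in which the product is negative.

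The substantive work is (P4). Here I fix the standard values from Table~\ref{tab:tab2} and set $u:=\xi-\beta_0\in(0,\xi)$. The first summand of $W_c$ equals $-4u(\epsilon_b\mu-\alpha\xi)$, which is positive since $\epsilon_b\mu<\epsilon_b<\alpha\xi$ when $0<\mu<1$; the second equals $4\alpha u^2\coth(\alpha u/\epsilon_b)>0$. The plan is therefore to show that the logarithmic term cannot overcome these two strictly positive contributions, so that $W_c>0$ on all of $(0,\xi)$ and in particular has no zero there. Concretely I will bound the logarithm by controlling $N/D$ through the two factors $P=\epsilon_b\mu-\alpha u(1+\coth(\alpha u/\epsilon_b))$ and $Q=\epsilon_b\mu-\alpha u(\coth(\alpha u/\epsilon_b)-1)$ obtained after the substitution, using the explicit numerical constants to make the estimate rigorous; an alternative is to establish strict monotonicity of $W_c$ on $(0,\xi)$ and invoke $W_c(\xi)=0$. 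This step is the main obstacle, precisely because $W_c$ is transcendental (through both $\ln$ and $\coth$) and so cannot be analysed by root-counting of a polynomial: the argument must be a sign/monotonicity estimate that exploits the concrete parameter values.

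Finally (P5) follows by combining (P2) with the mechanism of (P4). For $\mu$ slightly above $1$ one has $\mu-1>0$ small, so $\alpha\xi-\epsilon_b(\mu-1)>0$ (as $\alpha\xi/(\mu-1)\to+\infty$ while $\epsilon_b$ is fixed) and $\alpha-2(\mu-1)\xi>0$, whence $W_c'(\xi)>0$ by (P2); since $W_c(\xi)=0$ this forces $W_c<0$ on a left neighbourhood of $\xi$. Away from $\xi$, however, the positivity estimate of (P4) is robust in $\mu$: the two strictly positive algebraic terms depend continuously on $\mu$ and the bound on the logarithmic term persists for $\mu$ near $1$, so $W_c(\beta_0^{**})>0$ at any fixed interior $\beta_0^{**}$ bounded away from $\xi$. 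The intermediate value theorem then produces a zero $\beta_0^*\in(\beta_0^{**},\xi)$. Since both controlling inequalities are strict they define an open set of admissible $\mu$, yielding the interval $(\mu_1,\mu_2)$ with $1<\mu_1<\mu_2$; a final check that the constraints \eqref{eq:constraints} hold at $\beta_0^*$, immediate from $\beta_0^*<\xi$ and evaluation of $2\alpha_0\beta_0$, completes the argument.
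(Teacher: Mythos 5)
Your computations for (P1)--(P3) are correct and agree with the paper's ``direct limit calculation'': the expansion $w_c(\beta_0)=-\epsilon_b-\tfrac{\alpha^2}{3\epsilon_b}(\xi-\beta_0)^2+\cO((\xi-\beta_0)^4)$ and the resulting value of $W_c'(\xi)$ both check out. The gap is in (P4) and, more seriously, in (P5). For (P4) you only announce a plan (``bound the logarithm \ldots this step is the main obstacle'') and never execute it, so that part is not yet a proof; the paper sidesteps the estimate entirely with a geometric argument: a canard candidate forces the slow-flow trajectory to meet a single invariant line $b=\epsilon_b a+\mathrm{const}$ twice, which gives the slope inequality $\epsilon_b>\epsilon_b/(\mu-\alpha\alpha_0)$ and hence $\mu>1$. (Your route can be completed: for $\mu<1$ the two factors $P=\epsilon_b\mu-\alpha(\xi-\beta_0)+w_c$ and $Q=\epsilon_b\mu+\alpha(\xi-\beta_0)+w_c$ satisfy $P\leq Q$ and $P\leq\epsilon_b(\mu-1)<0$, so wherever the logarithm is defined both are negative, $(2\xi-\beta_0)|P|\geq\beta_0|Q|$, and the log term is nonnegative; but you must actually write this down.)

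For (P5) your sign configuration is the reverse of the true one and the key interior claim is false. Write the argument of the logarithm as $N/D$ with $N=(2\xi-\beta_0)\,w_{\textnormal{aux}}(\beta_0)$, $w_{\textnormal{aux}}(\beta_0)=\epsilon_b\mu+\alpha(\beta_0-\xi)+w_c(\beta_0)$. For $\mu>1$ one has $w_{\textnormal{aux}}(\xi)=\epsilon_b(\mu-1)>0$ and $w_{\textnormal{aux}}'(\xi)=\alpha>0$, while $D>0$ on all of $(0,\xi)$; hence $w_{\textnormal{aux}}$ vanishes at some $\beta_{00}<\xi$ with $\xi-\beta_{00}=\cO(\mu-1)$, so $N\ra 0^+$, $W_c\ra-\I$ as $\beta_0\ra\beta_{00}^+$, and the logarithm is undefined for $\beta_0<\beta_{00}$. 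Consequently, for $\mu$ slightly above $1$ your assertion that ``$W_c(\beta_0^{**})>0$ at any fixed interior $\beta_0^{**}$ bounded away from $\xi$'' fails: $W_c$ is not even defined there, and on its actual domain $(\beta_{00},\xi)$, which shrinks to a point as $\mu\ra 1^+$, $W_c$ is negative near both endpoints (and, by a size comparison of the three summands, on the whole interval), so your intermediate value argument produces no root in that regime. The paper argues in the opposite direction: it chooses $\mu$ in case (iii) of (P3) (numerically $\mu>233/196$), so that $W_c'(\xi)<0$ and therefore $W_c>0$ immediately to the left of $\xi$, and pairs this with the blow-up $W_c\ra-\I$ at $\beta_{00}$ to obtain the root from the intermediate value theorem. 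Your write-up needs to be repaired along these lines; the concluding remark about verifying $2\alpha_0\beta_0<1$ is also not ``immediate'' and is in any case part of Corollary \ref{cor:sing_orbit} rather than of this lemma.
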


\begin{proof}
(P1) and (P2) follow from a direct limit calculation. (P3) is a corollary
of (P2). (P4) can be proven via a geometric argument: The invariant lines
\eqref{eq:inv_lines} have slope $m_1=\epsilon_b$ as functions of $a$. The slow
flow \eqref{eq:Olsen1_bu_sf_simple} is affine with direction
$(\mu-\alpha a,\epsilon_b)^T$ so that locally we have a slope
$m_2=\epsilon_b/(\mu-\alpha a)$. A canard candidate of the prescribed form
certainly requires $m_1>m_2$ at the point $(\alpha_0,\beta_0)$ as a
trajectory of \eqref{eq:Olsen1_bu_sf_simple} must intersect a single
invariant line \eqref{eq:inv_lines} twice. Hence, 
$\epsilon_b>\epsilon_b/(\mu-\alpha_0 a)$ and so $\mu-\alpha_0 a>1$;
even if $\alpha_0$ is very small we need at least $\mu>1$ which proves (P3).

Regarding (P5), we first observe that (P4) implies we have to restrict 
to the case $\mu\geq1$ to find a root. Then we consider (P3) and observe 
that $W'_c(\xi)<0$ if and only if (P3)(iii) holds (which actually yields 
a bound $\mu>233/196$). If we can show that there exists $\beta_0\in(0,\xi)$
such that $W_c(\beta_0)<0$, then the intermediate value theorem will yield
the required root, as well as the open set of $\mu$-parameter values. To show
this, consider one part of the argument of the logarithmic term in $W_c$ given by
\benn
w_{\textnormal{aux}}(\beta_0):=\epsilon_b\mu+\alpha(\beta_0-\xi)+w_c(\beta_0).
\eenn
We may directly check that $w_{\textnormal{aux}}(\xi)=\epsilon_b(\mu-1)>0$ and there
exists $\beta_0<\xi$ such that $w_{\textnormal{aux}}(\beta_0)<0$ 
{e.g.}~$w_{\textnormal{aux}}(\beta_0-\epsilon_b/\alpha)=\mu-1-\coth(1)$ so that
there exists an open set of $\mu$-values for which $\mu>233/196$ and $\mu-1-\coth(1)<0$. 
The intermediate value theorem implies that there exists $\beta_{00}$ such that 
$w_{\textnormal{aux}}(\beta_{00})=0$. Using another direct calculation, we see that
the term $w_{\textnormal{aux}}(\beta_0)$ dominates in the exponential and it
follows that $\lim_{\beta_0\ra \beta_{00}}W_c(\beta_0)=-\I$. Hence, there exists 
a $\beta_0^*$ with $\beta_0^*<\xi$ such that $W(\beta_0^*)=0$.
\end{proof}

Of course, the result (P5) above is not very explicit and could potentially be 
improved. However, we do not think it is possible to provide a full classification
of all periodic solutions based upon all the system parameters analytically. To 
explore various quantitative bounds for parameter ranges, it seems more adequate 
to use numerical methods, such as numerical continuation 
\cite{DesrochesKrauskopfOsinga1,Desrochesetal}. Here 
we only provide a proof of the main geometric structure. 
Lemma \ref{lem:find_canard_jump} implies the existence of a candidate
orbit for the canard case of Theorem \ref{thm:main_intro}.

\begin{cor}
\label{cor:sing_orbit}
There exists an open set $(\mu_{c,1},\mu_{c,2})$ with $\mu_{c,1}<\mu_{c,2}$ such that the 
Olsen model \eqref{eq:Olsen2}, for $\mu\in(\mu_{c,1},\mu_{c,2})$ and otherwise standard 
parameter values from Table \ref{tab:tab1} ($k_1=0.41$), has a periodic 
candidate orbit $\psi_0$. It consists of two segments, one for the slow flow 
\eqref{eq:Olsen1_bu_sf_simple} including a canard segment and one consisting
of a large loop defined by \eqref{eq:Olsen2_sf_big}.
\end{cor}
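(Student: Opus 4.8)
The plan is to treat Corollary \ref{cor:sing_orbit} as the assembly step that glues together the local analyses of the previous sections; all the genuinely hard dynamical work is already done. A periodic candidate of the canard type is determined by a single starting point $(\alpha_0,\beta_0)$ on the attracting branch $\cS^{a-}_{2,0}$, from which the orbit is built as a concatenation of two pieces: (i) the slow segment governed by \eqref{eq:Olsen1_bu_sf_simple} together with its maximal-delay continuation onto $\cS^{r+}_{2,0}$, which by Corollary \ref{cor:delay1} terminates at the departure point $(\alpha_1,\beta_1)$ with $\beta_1=2\xi-\beta_0$; and (ii) the large loop on $\cC_0$ governed by \eqref{eq:Olsen2_sf_big}, which by Proposition \ref{prop:large} and Lemma \ref{lem:returns} makes a single excursion and returns to the fold at some $(\alpha_2,\beta_2)$. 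Lemma \ref{lem:key_algebra1} already shows that the closure requirement $(\alpha_0,\beta_0)=(\alpha_2,\beta_2)$ from \eqref{eq:cand_periodic} collapses to the single scalar equation $W_c(\beta_0)=0$, so the existence of a candidate is equivalent to the existence of an admissible root of $W_c$.

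First I would invoke Lemma \ref{lem:find_canard_jump}(P5) verbatim: for standard parameters with $k_1=0.41$ it supplies an open interval of $\mu$-values and, for each such $\mu$, a root $\beta_0^*\in(0,\xi)$ of $W_c$. I would take $(\mu_{c,1},\mu_{c,2})$ to be this interval, reserving the right to shrink it below. Given $\beta_0^*$, I would reconstruct the full cycle explicitly: set $\alpha_0=w_{\textnormal{aux}}(\beta_0^*)/(\alpha\epsilon_b)$ via \eqref{eq:magic_formula}, obtain $(\alpha_1,\beta_1)$ from \eqref{eq:cand1}--\eqref{eq:cand2}, and the landing point $(\alpha_2,\beta_2)$ from \eqref{eq:cand3}--\eqref{eq:cand4}. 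By the equivalence in Lemma \ref{lem:key_algebra1}, $W_c(\beta_0^*)=0$ forces $(\alpha_2,\beta_2)=(\alpha_0,\beta_0^*)$, so the slow-flow-with-canard segment and the large loop close up into one periodic candidate $\psi_0$ lying in a single invariant line \eqref{eq:inv_lines}.

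Before declaring $\psi_0$ admissible I would check the geometric constraints on the constructed root. The inequality $\beta_0^*<\xi$ is part of (P5). The condition $2\alpha_0\beta_0^*<1$ from \eqref{eq:constraints}, needed so that $(\alpha_0,\beta_0^*)$ genuinely lies in case (C1) on $\cS^{a-}_{2,0}$, becomes $2\beta_0^* w_{\textnormal{aux}}(\beta_0^*)/(\alpha\epsilon_b)<1$ after substituting \eqref{eq:magic_formula}; since the root produced by the intermediate value theorem sits just above the zero $\beta_{00}$ of $w_{\textnormal{aux}}$, the factor $w_{\textnormal{aux}}(\beta_0^*)$ is positive but controlled, which I expect to yield the bound on a suitable subinterval. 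Dually, for the large loop to exist with a single maximum and return to the fold, Lemma \ref{lem:returns} requires $2\alpha_1\beta_1>1$ at the departure point, where $\beta_1=2\xi-\beta_0^*>\xi$; here one must use that the canard has carried the trajectory far enough that $\alpha_1$ is bounded below (via the explicit exponential factor in \eqref{eq:cand1}). Openness of the admissible $\mu$-set is then inherited from (P5) together with continuous dependence of $\alpha_0,\alpha_1,\beta_1$ on $\mu$ and on $\beta_0^*$.

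I expect the main obstacle to be precisely the reconciliation of these admissibility inequalities with the abstract root, rather than any new dynamical phenomenon. The delicate point is that the root delivered by (P5) must simultaneously satisfy $2\alpha_0\beta_0^*<1$ and $2\alpha_1\beta_1>1$, and these compete: a root very close to $\xi$ keeps $\alpha_0$ small (good for the first inequality) but, through the exponential in \eqref{eq:cand1}, may leave $\alpha_1$ too small for the second. If a direct estimate does not close both bounds on the full interval from (P5) (whose proof only guarantees $\mu>233/196$), I would restrict $(\mu_{c,1},\mu_{c,2})$ to a subinterval on which continuity forces all inequalities to hold; since the statement only asserts the existence of \emph{some} open set, this restriction is harmless.
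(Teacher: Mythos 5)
Your proposal is correct and follows essentially the same route as the paper: the corollary is obtained by combining Lemma \ref{lem:key_algebra1} (reduction of the closure conditions \eqref{eq:cand1}--\eqref{eq:cand_periodic} to the scalar equation $W_c(\beta_0)=0$) with Lemma \ref{lem:find_canard_jump}(P5), which supplies the root and the open interval of $\mu$-values. Your extra attention to verifying the admissibility constraints \eqref{eq:constraints} (and the option to shrink the $\mu$-interval by continuity) is a sensible refinement of what the paper leaves implicit, not a different argument.
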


Corollary \ref{cor:sing_orbit} is a singular limit result for $\epsilon=0$. 
Therefore, we still have to show that the candidate orbit indeed perturbs to
an actual periodic orbit for $0<\epsilon\ll1$. This step, which is actually 
the reason why we use the blow-up technique, is carried out in Section 
\ref{sec:retmap}. We refer to the candidate orbit from Corollary \ref{cor:sing_orbit}
as a candidate of a non-classical relaxation oscillation. Indeed, comparing
with the classical relaxation oscillation case \cite{Grasman}, one immediately
notices that our construction here still has a `fast' phase corresponding to the
large loop and a `slow' phase corresponding to a sliding-type motion near the
fold locus. However, the critical manifold structure(s) as well as the fast-slow
decomposition differ substantially from the cubic or S-shaped critical manifold of
classical relaxation oscillations; see also Figure \ref{fig:fig5}. 

\subsection{A Jump Candidate}
\label{ssec:jump_candidate}

The next step is to also consider the jump case from Proposition \ref{prop:tc_canards}
in combination with the large loops. The jump candidate orbit consists of a concatenation 
of a slow flow segment defined by \eqref{eq:Olsen1_bu_sf_simple} on the time scale 
$s$ up to $b_2=\xi$ and a fast segment on the time scale $\tau=\epsilon^{-2}s$ for
\eqref{eq:Olsen2}; see also Figure \ref{fig:fig7}(a).\medskip

As in Section \ref{ssec:canard_candidate}, let $(\alpha_0,\beta_0)$ denote an
initial condition for \eqref{eq:Olsen1_bu_sf_simple} with $2\alpha_0\beta_0<1$
and $\beta_0<\xi$. The departure point $(\alpha_1,\beta_1)$ for the jump case 
is calculated in \eqref{eq:ic_cond_jump}. Analogously, to 
\eqref{eq:cand1}-\eqref{eq:cand4} we get four algebraic equations
\bea
\alpha_1&=&\frac\mu\alpha+e^{-\frac{\alpha}{\epsilon_b}(\xi-\beta_0)}
\left(\alpha_0-\frac\mu\alpha\right),\label{eq:cand1a}\\
\beta_1&=&\xi,\label{eq:cand2a}\\
0&=&2(\alpha_2-\alpha_1)(\beta_1-\alpha_1\epsilon_b)
-\ln\left(\frac{\beta_1\alpha_2}{\alpha_1(\beta_1+\epsilon_b(\alpha_2-\alpha_1))}\right),
\label{eq:cand3a}\\
\beta_1&=&\epsilon_b \alpha_1+\beta_2-\epsilon_b\alpha_2.\label{eq:cand4a}
\eea
For a periodic candidate orbit we must again impose 
\be
\label{eq:cand_periodica}
(\alpha_0,\beta_0)\stackrel{!}{=}(\alpha_2,\beta_2).
\ee
Substituting \eqref{eq:cand_periodica} into \eqref{eq:cand1a}-\eqref{eq:cand4a} yields
a nonlinear system of four algebraic equations in four unknowns
$(\alpha_0,\beta_0,\alpha_1,\beta_1)$.

\begin{lem}
The system \eqref{eq:cand1a}-\eqref{eq:cand_periodica} can be simplified to a single
algebraic equation for $\beta_0$ given by
\bea
\label{eq:b0_equationa}
0&=&2(\beta_0-\xi)\left(\epsilon_b\mu-\alpha\xi+\alpha(\beta_0-\xi)/w_j(\beta_0)\right)\\
&&+\alpha\epsilon_b\ln\left(\frac{\xi(\mu\epsilon_b w_j(\beta_0)+
\alpha(\beta_0-\xi)\exp[\alpha(\xi-\beta_0)/\epsilon_b]}
{\beta_0(\mu\epsilon_b w_j(\beta_0)+\alpha(\beta_0-\xi))}\right)
=:W_j(\beta_0)\nonumber
\eea
where $w_j(\beta_0):=\exp[\alpha(\xi-\beta_0)/\epsilon_b]-1$.
\end{lem}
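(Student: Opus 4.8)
The plan is to follow the same elimination scheme used in the proof of Lemma~\ref{lem:key_algebra1} for the canard case, adapting it to the simpler jump departure conditions. After imposing periodicity \eqref{eq:cand_periodica}, i.e.~$\alpha_2=\alpha_0$ and $\beta_2=\beta_0$, we have the four equations \eqref{eq:cand1a}--\eqref{eq:cand4a} in the four unknowns $(\alpha_0,\beta_0,\alpha_1,\beta_1)$, and the goal is to eliminate $\beta_1$, $\alpha_1$ and $\alpha_0$ in that order, leaving a single transcendental equation in $\beta_0$.

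First I would use \eqref{eq:cand2a}, which gives $\beta_1=\xi$ outright; this is even simpler than the canard case, where $\beta_1=2\xi-\beta_0$ still depended on $\beta_0$. Substituting $\beta_1=\xi$ together with $\alpha_2=\alpha_0$ into the invariant-line constraint \eqref{eq:cand4a} yields a linear relation which solves for $\alpha_1$ in closed form, namely $\alpha_1=\alpha_0+(\xi-\beta_0)/\epsilon_b$. Inserting this into the slow-flow matching condition \eqref{eq:cand1a} produces an equation that is affine in $\alpha_0$ (the only transcendental term, $e^{-\alpha(\xi-\beta_0)/\epsilon_b}$, multiplies $\alpha_0$ linearly), so it can be solved uniquely for $\alpha_0$ as a function of $\beta_0$. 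This is the jump analogue of the ``magic formula'' \eqref{eq:magic_formula}; the single exponential here collapses to the factor $w_j(\beta_0)=e^{\alpha(\xi-\beta_0)/\epsilon_b}-1$, in place of the $\coth$ that arose from the doubled exponential in the canard case.

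With $\beta_1$, $\alpha_1$ and $\alpha_0$ now all expressed through $\beta_0$, the remaining step is to substitute them into the loop-closure equation \eqref{eq:cand3a}. Here two simplifications occur: the inner factor $\beta_1+\epsilon_b(\alpha_2-\alpha_1)$ of the logarithm reduces to $\beta_0$, and the quantity $\alpha_2-\alpha_1$ reduces to $(\beta_0-\xi)/\epsilon_b$. Multiplying the resulting identity through by $-\alpha\epsilon_b$ to clear the $1/\epsilon_b$ and $\mu/\alpha$ denominators, and to flip the overall sign so that the logarithm carries the prefactor $+\alpha\epsilon_b$, collects the polynomial part into $2(\beta_0-\xi)\left(\epsilon_b\mu-\alpha\xi+\alpha(\beta_0-\xi)/w_j(\beta_0)\right)$ and leaves the logarithmic part with exactly the stated argument, giving $W_j(\beta_0)$ as in \eqref{eq:b0_equationa}.

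I expect the only real obstacle to be the bookkeeping in this last substitution, in particular verifying that the ratio $\xi\alpha_0/(\alpha_1\beta_0)$ matches the stated logarithmic argument after writing $\alpha_0$ and $\alpha_1$ over the common denominator $\alpha\epsilon_b w_j(\beta_0)$ and using $e^{\alpha(\xi-\beta_0)/\epsilon_b}=w_j(\beta_0)+1$. This is routine but error-prone; there is no conceptual difficulty, since the crucial point---that \eqref{eq:cand1a} is affine in $\alpha_0$ and hence always admits a closed-form solution---holds verbatim as in the canard case, and the transcendental nature of the final equation is precisely what forces the subsequent root-finding analysis to proceed as in Lemma~\ref{lem:find_canard_jump} rather than by an explicit formula.
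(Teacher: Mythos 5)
Your proposal is correct and follows essentially the same route as the paper, which simply states that the canard-case elimination of Lemma \ref{lem:key_algebra1} is to be repeated with \eqref{eq:cand1a}--\eqref{eq:cand2a} in place of \eqref{eq:cand1}--\eqref{eq:cand2}; your explicit intermediate formulas ($\alpha_1=\alpha_0+(\xi-\beta_0)/\epsilon_b$, the reduction of $\beta_1+\epsilon_b(\alpha_2-\alpha_1)$ to $\beta_0$, and the closed-form $\alpha_0$ involving $w_j$) all check out and reproduce \eqref{eq:b0_equationa} after multiplying through by $-\alpha\epsilon_b$.
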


\begin{proof}
Similar steps as in the proof of Lemma \ref{lem:key_algebra1} are required. We have to replace the
algebraic equations \eqref{eq:cand1}-\eqref{eq:cand2} by \eqref{eq:cand1a}-\eqref{eq:cand2a}
and carry out lengthy, albeit quite direct, algebraic manipulations to obtain \eqref{eq:b0_equationa}.
\end{proof}

The next result follows from a direct calculation using \eqref{eq:b0_equationa} which we
omit here for brevity.

\begin{lem}
\label{lem:find_jump_jump}
The properties (P1)-(P5) from Lemma \ref{lem:find_canard_jump} hold verbatim if
$W_c$ is replaced by $W_j$.
\end{lem}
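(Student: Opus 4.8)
The plan is to run the proof of Lemma~\ref{lem:find_canard_jump} line by line with $W_c$ replaced by $W_j$, since \eqref{eq:b0_equation} and \eqref{eq:b0_equationa} share the same architecture: a polynomial block carrying a prefactor $(\beta_0-\xi)$, a ``delay'' contribution, and a logarithmic term with coefficient $\alpha\epsilon_b$. The only genuinely new input is that the canard delay factor $w_c(\beta_0)=\alpha(\beta_0-\xi)\coth[\alpha(\xi-\beta_0)/\epsilon_b]$ is replaced by the jump factor built from $w_j(\beta_0)=e^{\alpha(\xi-\beta_0)/\epsilon_b}-1$; this reflects that the jump candidate transits $b_2$ from $\beta_0$ only up to $\xi$, whereas the canard transits symmetrically to $2\xi-\beta_0$. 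I would therefore set $u:=\xi-\beta_0$ and Taylor expand every block about $u=0$, exactly as for $W_c$.

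For (P1) I would verify that $W_j$ has a removable singularity at $\beta_0=\xi$ with $W_j(\xi)=0$. The prefactor $(\beta_0-\xi)$ kills the first block in the limit once one checks $\alpha(\beta_0-\xi)/w_j\to-\epsilon_b$ (from $e^z-1=z+\tfrac12 z^2+\cdots$), and the argument of the logarithm tends to $1$ because both its numerator and denominator behave like $\alpha(\mu-1)u$ as $u\to0$, so the leading $u$'s cancel. For (P2) I would push the same expansion one order further: $e^z-1=z(1+\tfrac12 z+\cdots)$ gives $\alpha(\beta_0-\xi)/w_j=-\epsilon_b-\tfrac{\alpha}{2}(\beta_0-\xi)+O((\beta_0-\xi)^2)$, and a first-order expansion of the numerator and denominator of the logarithm yields $\tfrac{d}{d\beta_0}\ln[\cdots]=\tfrac{\alpha}{\epsilon_b(\mu-1)}-\tfrac1\xi$ at $\beta_0=\xi$. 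Assembling the three contributions reproduces the factorization appearing in (P2).

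Here I would flag the one point of honesty. Because the jump candidate transits $b_2$ once rather than twice, each block of $W_j$ is asymptotically one half of the corresponding block of $W_c$ near $\beta_0=\xi$, so in fact $W_j'(\xi)=\tfrac12 W_c'(\xi)=\tfrac1\xi(\epsilon_b-\epsilon_b\mu+\alpha\xi)(\alpha-2(\mu-1)\xi)/(\mu-1)$; that is, (P2) holds up to the positive factor $\tfrac12$. This is immaterial for what follows: (P3) depends only on the \emph{sign} of $W'(\xi)$ and on the identical factorization, so its three-case characterization transfers verbatim, while (P4)--(P5) are statements about non-vanishing and about the existence of a root, which an overall positive constant cannot affect.

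Finally, (P4) is purely geometric and carries over unchanged: the invariant lines \eqref{eq:inv_lines} still have slope $\epsilon_b$ and the slow flow \eqref{eq:Olsen1_bu_sf_simple} still has slope $\epsilon_b/(\mu-\alpha a)$, so closing the orbit forces $\mu>1$ regardless of whether the departure ordinate is $\xi$ or $2\xi-\beta_0$. For (P5) I would reproduce the intermediate-value argument, which is where the real work lies. The role of $w_{\textnormal{aux}}$ is played by the inner factor of the logarithm's \emph{numerator} in \eqref{eq:b0_equationa}; writing $z=\alpha u/\epsilon_b$ this factor is proportional to $(\mu-z)e^z-\mu$, which equals $(\mu-1)z+O(z^2)>0$ for small $z>0$ when $\mu>1$ and tends to $-\infty$ as $z\to\infty$, hence vanishes at some $z^*>0$, i.e.\ at some $\beta_{00}<\xi$, forcing $W_j(\beta_0)\to-\infty$ there. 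Combined with $W_j(\xi)=0$ and $W_j'(\xi)<0$ (available on the same open $\mu$-range as in the canard case by (P3)), the intermediate value theorem yields a root $\beta_0^*\in(\beta_{00},\xi)$ and an open set of admissible $\mu$. The delicate step is precisely this one: one must confirm that it is the numerator factor $(\mu-z)e^z-\mu$ that vanishes first while the denominator factor $\mu e^z-z-\mu$ stays strictly positive for all $z>0$ (since its $z$-derivative $\mu e^z-1>0$), so that the logarithmic term genuinely dominates the bounded polynomial part near $\beta_{00}$.
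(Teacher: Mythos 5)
Your proposal is correct and follows the same route the paper intends: the paper omits the computation entirely (``follows from a direct calculation\dots which we omit here for brevity''), and you supply exactly the expansion about $\beta_0=\xi$ and the intermediate-value argument that the omitted calculation would consist of, including the key verification that the numerator factor $(\mu-z)e^z-\mu$ of the logarithm vanishes at some $z^*>0$ while the denominator factor $\mu e^z-z-\mu$ stays positive. Your observation that $W_j'(\xi)=\tfrac12 W_c'(\xi)$ is also correct (both the polynomial block and the logarithmic block halve, since the jump candidate traverses $b$ only from $\beta_0$ to $\xi$ rather than to $2\xi-\beta_0$), so strictly speaking (P2) holds only up to the positive factor $\tfrac12$ rather than ``verbatim''; this is harmless for the lemma's use, since---as the paper itself notes after the statement---only the sign of $W'(\xi)$ and the asymptotics of the logarithmic summand enter (P3)--(P5).
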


In fact, note that the arguments in Lemma \ref{lem:find_canard_jump} only depend upon
the sign of the derivative $W'_c(\xi)$ and the asymptotic behaviour of the logarithmic
summand in $W_c$. Lemma \ref{lem:find_jump_jump} states that the same technique can 
also be applied to $W_j$. From this last observation, the next two result follow
immediately.

\begin{cor}
\label{cor:sing_orbit1}
There exists an open set $(\mu_{j,1},\mu_{j,2})$ with $\mu_{j,1}<\mu_{j,2}$ such that the 
Olsen model \eqref{eq:Olsen2}, for $\mu\in(\mu_{j,1},\mu_{j,2})$ and otherwise standard 
parameter values from Table \ref{tab:tab1} ($k_1=0.41$), has a periodic 
candidate orbit $\psi_0$. It consists of two segments, one for the slow flow 
\eqref{eq:Olsen1_bu_sf_simple} without a canard segment and one consisting
of a large loop defined by \eqref{eq:Olsen2_sf_big}.
\end{cor}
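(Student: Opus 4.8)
The plan is to mirror the canard construction (Corollary \ref{cor:sing_orbit}) verbatim, since all of the genuine analytical content has already been assembled into Lemma \ref{lem:find_jump_jump}. First I would invoke property (P5) for $W_j$: there is an open interval $(\mu_{j,1},\mu_{j,2})$ with $1<\mu_{j,1}<\mu_{j,2}$ such that for each $\mu$ in it the function $W_j$ of \eqref{eq:b0_equationa} has a root $\beta_0^*\in(0,\xi)$. By the jump-case analogue of Lemma \ref{lem:key_algebra1}, the equation $W_j(\beta_0^*)=0$ is equivalent to the solvability of the full algebraic system \eqref{eq:cand1a}--\eqref{eq:cand_periodica}; that is, the root encodes a closed concatenation in which the jump-off point at $b_2=\xi$, carried around the large loop \eqref{eq:Olsen2_sf_big}, returns along the same invariant line \eqref{eq:inv_lines} to the starting value $(\alpha_0,\beta_0)$.

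Next I would reconstruct the orbit explicitly from $\beta_0^*$. Setting $\beta_0=\beta_0^*$ and running the chain \eqref{eq:cand1a}--\eqref{eq:cand4a} fixes $(\alpha_0,\beta_0)$ through the jump analogue of \eqref{eq:magic_formula}, the departure point $(\alpha_1,\xi)$, and the landing point $(\alpha_2,\beta_2)=(\alpha_0,\beta_0)$. The candidate orbit $\psi_0$ is then the concatenation of the slow segment of \eqref{eq:Olsen1_bu_sf_simple} running inside $\cS_{2,0}^{a-}$ from $(\alpha_0,\beta_0)$ up to $b_2=\xi$, with no canard extension by Proposition \ref{prop:tc_canards}(2), followed by the large loop of \eqref{eq:Olsen2_sf_big} on $\cC_0$ given by \eqref{eq:sol_loop}, which by Lemma \ref{lem:returns} rises to a single maximum and returns to $y=0$ at $a=\alpha_2$.

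The only point requiring attention is geometric admissibility, namely the jump-case form of the constraints \eqref{eq:constraints}: the inequality $\beta_0^*<\xi$, which is delivered directly by (P5), and $2\alpha_0\beta_0<1$, which places the entry on the attracting branch $\cS_{2,0}^{a-}$ and keeps the trajectory away from the excluded degenerate set $\cB(\upsilon)$. I would verify the second inequality from the explicit expression for $\alpha_0$ produced by the jump analogue of \eqref{eq:magic_formula}, exactly as in the canard case. I expect this verification to be the only nontrivial step, and even it is routine given Lemma \ref{lem:returns}; everything else is immediate from Lemma \ref{lem:find_jump_jump}, so the corollary follows.
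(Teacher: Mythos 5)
Your proposal matches the paper's argument: the paper likewise reduces the periodicity conditions \eqref{eq:cand1a}--\eqref{eq:cand_periodica} to the single equation $W_j(\beta_0)=0$, invokes Lemma \ref{lem:find_jump_jump} (the verbatim transfer of (P1)--(P5) to $W_j$) to produce the root and the open $\mu$-interval, and notes that the corollary then follows immediately, the argument depending only on the sign of $W_j'(\xi)$ and the asymptotics of the logarithmic summand. Your added attention to the admissibility constraints $\beta_0^*<\xi$ and $2\alpha_0\beta_0<1$ is consistent with the constraints \eqref{eq:constraints} the paper imposes in the canard case and carries over here without change.
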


\begin{cor}
\label{cor:delta_controls}
The open sets $(\mu_{c,1},\mu_{c,2})$ from Corollary \ref{cor:sing_orbit} and 
$(\mu_{j,1},\mu_{j,2})$ Corollary \ref{cor:sing_orbit1} have a non-empty intersection
{i.e.}~there exists an open set $(\mu_1,\mu_2)$ with $\mu_1<\mu_2$ such that
$(\mu_1,\mu_2)\subseteq (\mu_{j,1},\mu_{j,2})\cap (\mu_{c,1},\mu_{c,2})$.
\end{cor}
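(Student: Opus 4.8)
The plan is to exhibit a common sub-interval of $(\mu_{c,1},\mu_{c,2})$ and $(\mu_{j,1},\mu_{j,2})$ anchored at the value where $W'(\xi)$ changes sign. First I would invoke Lemma~\ref{lem:find_jump_jump}, which guarantees that (P1)--(P5) hold verbatim for $W_j$. The decisive consequence is (P2): it yields the identical formula $W_j'(\xi)=W_c'(\xi)$, so by (P3) the derivative $W'(\xi)$ vanishes for \emph{both} functions at the same threshold $\mu_*:=(\alpha+2\xi)/(2\xi)$ (equal to $233/196$ for the standard values of Table~\ref{tab:tab2} with $k_1=0.41$, which places us in case (P3)(iii) since $\epsilon_b$ is fixed and satisfies $0<\epsilon_b<\alpha\xi/(\mu-1)$ there).

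Next I would recall the root-producing mechanism from the proof of (P5). A zero of $W$ in $(0,\xi)$ is supplied by the intermediate value theorem as soon as two conditions hold: (A)~$W'(\xi)<0$, which together with $W(\xi)=0$ from (P1) forces $W(\beta_0)>0$ for $\beta_0$ just below $\xi$; and (B)~the logarithmic summand of $W$ diverges to $-\infty$ at some interior point $\beta_{00}\in(0,\xi)$, which happens precisely when the relevant factor of the argument of the logarithm passes through zero. Condition (A) is \emph{identical} for $W_c$ and $W_j$ and therefore holds on the common range $\mu>\mu_*$ within case (P3)(iii). For the canard function, (B) was shown to coexist with (A) for $\mu<1+\coth(1)$, via the sign change of $w_{\mathrm{aux}}$ at $\beta_0=\xi-\epsilon_b/\alpha$; since $\mu_*<1+\coth(1)$, the canard interval contains a right-neighbourhood $(\mu_*,\mu_*+\eta_c)$. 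Because Lemma~\ref{lem:find_jump_jump} asserts that the same (P5)-construction applies to $W_j$, the jump interval likewise contains a right-neighbourhood $(\mu_*,\mu_*+\eta_j)$.

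Setting $(\mu_1,\mu_2):=(\mu_*,\mu_*+\min\{\eta_c,\eta_j\})$ then yields a nonempty open interval lying in both $(\mu_{c,1},\mu_{c,2})$ and $(\mu_{j,1},\mu_{j,2})$, which is exactly the claimed non-empty intersection. The only genuinely case-specific point, and the step I expect to require the most care, is verifying (B) for the jump function immediately above $\mu_*$, since its logarithmic argument is built from the exponential $w_j(\beta_0)=\exp[\alpha(\xi-\beta_0)/\epsilon_b]-1$ rather than the hyperbolic cotangent entering $w_c$. One must confirm that this argument still crosses zero on $(0,\xi)$ for $\mu$ just past $\mu_*$; this is precisely the content subsumed by the ``verbatim'' assertion of Lemma~\ref{lem:find_jump_jump}, and I would establish it by the same sign-tracking estimate, evaluating the relevant auxiliary expression at an interior value such as $\beta_0=\xi-\epsilon_b/\alpha$, already used in the canard case.
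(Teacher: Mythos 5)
Your proposal is correct and follows essentially the same route as the paper, which deduces the corollary ``immediately'' from the observation that the (P5)-argument depends only on the sign of $W'(\xi)$ (identical for $W_c$ and $W_j$ by (P2)) and on the divergence of the logarithmic summand, both of which Lemma~\ref{lem:find_jump_jump} transfers verbatim to $W_j$; you have merely made explicit the common sub-interval $(\mu_*,\mu_*+\min\{\eta_c,\eta_j\})$ that the paper leaves implicit. Your closing caveat about re-verifying the sign change of the jump-case auxiliary expression is exactly the content the paper subsumes into the omitted ``direct calculation'' behind Lemma~\ref{lem:find_jump_jump}, so nothing is missing.
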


Essentially, Corollary \ref{cor:delta_controls} states that $\delta$ may deform
a canard-type orbit with maximal delay into an orbit which jumps near
the transcritical singularity; see also the discussion in Section \ref{sec:tr_res}
following equation \eqref{eq:delta_deform}. As before, we are not interested here in
any sharp quantitative bounds for $\mu_1$, $\mu_2$ in Corollary \ref{cor:delta_controls}.

\section{The Return Map}
\label{sec:retmap}

The last step is to construct the global return map using the result from
Sections \ref{sec:main_bu}-\ref{sec:delay} to obtain perturbation of the canard
candidate orbits constructed in Sections \ref{sec:loops}-\ref{sec:candidate}.
As in Section \ref{sec:candidate}, we are going to split the analysis of the 
return map into the two main cases from Theorem \ref{thm:main_intro}.

\subsection{The Canard Case}
\label{ssec:canard_case_map}

Before we can analyze the full return map several auxiliary results on the slow 
flow \eqref{eq:Olsen1_bu_sf_simple} are needed. Let 
$\phi_c:[a^*,\I)\times[b^*,\xi)\ra [a^*,\I)\times[\xi,\I)$ denote the slow flow 
map with maximal delay for an initial condition $(a,b)$ with $2ab<1$ and 
$b<\xi$ so that 
\benn
\phi_c(a,b)=\left(\frac{\mu}{\alpha}+e^{-\frac{2\alpha}{\epsilon_b}
\left(\xi-b\right)}\left(a-\frac{\mu}{\alpha}\right),2\xi-b\right).
\eenn
We assume that $\mu$ is chosen so that the candidate orbit constructed in 
Section \ref{ssec:canard_candidate} exists. Let $(\alpha_0,\beta_0)$ denote the landing 
point of this singular periodic orbit on $\cS_{2,0}^{a-}$ and, as before, let 
$(\alpha_1,\beta_1)=\phi_c(\alpha_0,\beta_0)$; see also Figure \ref{fig:fig6}.

\begin{lem}
\label{lem:ret1}
Let $\rho>0$ be sufficiently small then for $\phi_c(\alpha_0,\beta_0-\rho)=
(\alpha_{1*},\beta_{1*})$ we have $\beta_{1*}>\beta_1$ and for 
$\phi_c(\alpha_0,\beta_0+\rho)=(\alpha^*_{1},\beta^*_{1})$ we have 
$\beta^*_{1}<\beta_1$. 
\end{lem}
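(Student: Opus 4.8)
The statement is purely about the explicit map $\phi_c$, so the plan is to read off the claim directly from the formula for its second component, $\beta \mapsto 2\xi - b$, together with the sign of the first component's dependence on $b$. The key observation is that the second coordinate of $\phi_c$ is $\beta_1 = 2\xi - \beta_0$, which is an affine function of the initial $b$-value with slope $-1$. The plan is therefore to compute the second coordinates of $\phi_c(\alpha_0, \beta_0 \pm \rho)$ and compare them to $\beta_1 = 2\xi - \beta_0$ directly from this monotonicity.

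First I would note that by definition
\benn
\phi_c(\alpha_0, \beta_0 \pm \rho) = \left( \frac{\mu}{\alpha} + e^{-\frac{2\alpha}{\epsilon_b}(\xi - \beta_0 \mp \rho)} \left(\alpha_0 - \frac{\mu}{\alpha}\right),\; 2\xi - \beta_0 \mp \rho \right),
\eenn
so that the second coordinate of $\phi_c(\alpha_0, \beta_0 - \rho)$ equals $\beta_{1*} = 2\xi - \beta_0 + \rho = \beta_1 + \rho > \beta_1$, and the second coordinate of $\phi_c(\alpha_0, \beta_0 + \rho)$ equals $\beta^*_1 = 2\xi - \beta_0 - \rho = \beta_1 - \rho < \beta_1$. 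This proves both inequalities immediately, with no smallness hypothesis on $\rho$ actually required for the $b$-component. The role of ``$\rho > 0$ sufficiently small'' is only to ensure that the perturbed initial conditions $(\alpha_0, \beta_0 \mp \rho)$ still lie in the admissible domain $[a^*,\I) \times [b^*, \xi)$ on which $\phi_c$ is defined as the slow-flow map with maximal delay; in particular one needs $\beta_0 - \rho \geq b^*$ and $\beta_0 + \rho < \xi$, and one needs the perturbed orbits to still satisfy the entry condition $2ab < 1$ with $b < \xi$ guaranteeing maximal delay via Corollary \ref{cor:delay1}.

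I do not expect a genuine obstacle here, since the dependence of $\beta_1$ on $\beta_0$ is affine with slope $-1$ and the inequalities drop out by inspection. The only point requiring a word of care is the implicit admissibility of the perturbed starting points: I would state explicitly that for $\rho$ small enough, $(\alpha_0, \beta_0 \mp \rho)$ remain in the region where the maximal-delay formula of Corollary \ref{cor:delay1} applies (away from the degenerate set $\cB(\upsilon)$ and with $2ab < 1$, $b < \xi$ preserved by continuity), so that $\phi_c$ is the correct transition map. Once that is fixed, the monotonicity of the second component gives the result. This monotonicity is precisely what is needed in the construction of the return map, since it shows that varying the landing coordinate $\beta_0$ moves the departure height $\beta_1$ strictly in the opposite direction, a transversality-type property that will be used to set up the one-dimensional contraction in the subsequent analysis.
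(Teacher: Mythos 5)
Your proof is correct and is essentially identical to the paper's: both simply read off the second component $b\mapsto 2\xi-b$ of $\phi_c$ at the perturbed initial conditions and compare. Your additional remark about the admissibility of $(\alpha_0,\beta_0\mp\rho)$ for small $\rho$ is a reasonable clarification but does not change the argument.
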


\begin{proof}
Under maximal delay we get $\beta_{1*}=2\xi-\beta_0+\rho>2\xi-\beta_0=\beta_1$ 
and for the second part $\beta^*_{1}=2\xi-\beta_0-\rho<2\xi-\beta_0=\beta_1$.
\end{proof}

\begin{figure}[htbp]
\psfrag{a}{$a$}
\psfrag{b}{$b$}
\psfrag{b=xi}{$\beta_2=\xi$}
\psfrag{2ab=1}{$2ab=1$}
\psfrag{a0b0+}{\scriptsize{$(\alpha_0,\beta_0+\rho)$}}
\psfrag{a0b0-}{\scriptsize{$(\alpha_0,\beta_0-\rho)$}}
\psfrag{a0b0}{\scriptsize{$(\alpha_0,\beta_0)$}}
\psfrag{a1b1+}{\scriptsize{$(\alpha_1^*,\beta_1^*)$}}
\psfrag{a1b1}{\scriptsize{$(\alpha_1,\beta_1)$}}
\psfrag{a1b1-}{\scriptsize{$(\alpha_{1*},\beta_{1*})$}}
\psfrag{a2b2+}{\scriptsize{$(\alpha_2^*,\beta_2^*)$}}
\psfrag{a2b2-}{\scriptsize{$(\alpha_{2*},\beta_{2*})$}}
\psfrag{slowflow}{slow flow}
\psfrag{line+}{\scriptsize{$b=\epsilon_b(a-\alpha_0)+\beta_0+\rho$}}
\psfrag{line}{\scriptsize{$b=\epsilon_b(a-\alpha_0)+\beta_0$}}
\psfrag{line-}{\scriptsize{$b=\epsilon_b(a-\alpha_0)+\beta_0-\rho$}}
\psfrag{alabel}{(a)}
\psfrag{blabel}{(b)}
	\centering
		\includegraphics[width=1\textwidth]{./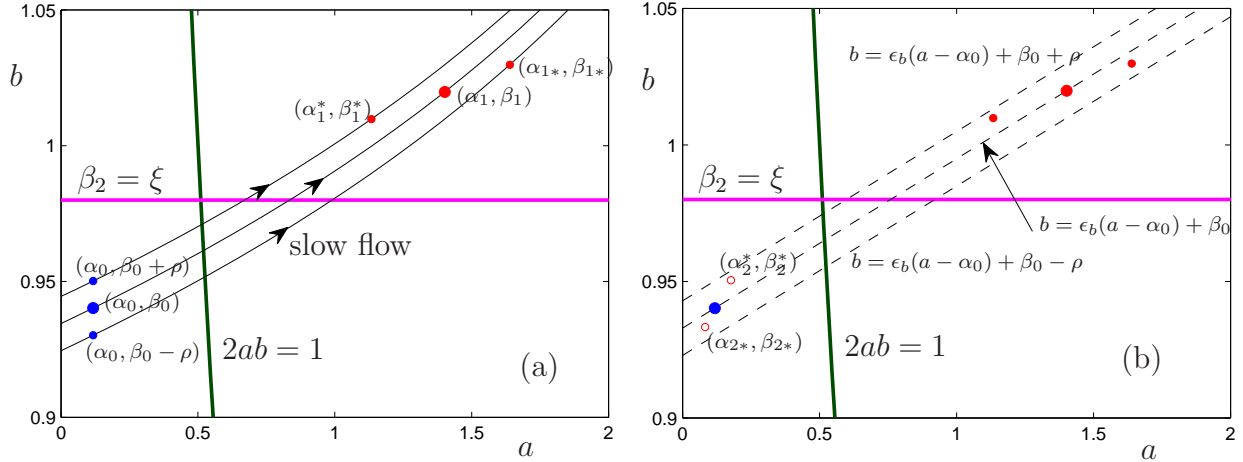}
	\caption{\label{fig:fig6}Numerical illustration of the results from Lemma \ref{lem:ret1} and
	Lemma \ref{lem:ret2}; slow flow map for the canard case. 
	Parameter values are $\epsilon_b=0.062$, $\kappa=3.93$, 
	$\xi=0.98$, $\alpha=0.37$, $\mu=1.3$. (a) Slow subsystem phase space with three 
	orbits (black curves) containing the three points (blue) $a=\alpha_0$, 
	$b=\beta_0-\rho,\beta_0,\beta_0+\rho$ for $\rho=0.01$ with $\alpha_0\approx 0.1176$ 
	and $\beta_0\approx 0.9402$. The image points under the slow flow map $\phi_c$ with 
	maximal delay (red) are shown as well. (b) Phase space with the three invariant lines 
	(dashed black, defined by \eqref{eq:inv_lines}). The thick points (blue/red) correspond 
	to the singular periodic orbit whereas the two circles (red) correspond to the images 
	under the global map defined by \eqref{eq:Olsen2_sf_big}.}
\end{figure}

We know that the canard candidate periodic orbit exists for $\beta_0<\xi$ under suitable 
conditions on $\mu$. The next result analyzes the slow dynamics of points near the candidate
orbit in more detail, which will be important for the stability of the periodic orbit.

\begin{lem}
\label{lem:ret2}
Under the assumptions of Lemma \ref{lem:ret1} we find that 
\benn
\beta_{1*}>\epsilon_b\alpha_{1*}+\beta_{0}-\rho-\epsilon_b\alpha_0 \qquad 
\text{and} \qquad \beta^*_{1}<\epsilon_b\alpha^*_{1}+\beta_0+\rho-\epsilon_b\alpha_0.
\eenn   
\end{lem}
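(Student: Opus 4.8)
The plan is to recast both inequalities as sign statements about a single scalar function. Write $(\alpha_1(b),\beta_1(b)):=\phi_c(\alpha_0,b)$ for the slow-flow image of a point on the vertical segment $a=\alpha_0$, so that by the explicit form of $\phi_c$ we have $\beta_1(b)=2\xi-b$ and $\alpha_1(b)=\frac\mu\alpha+e^{-\frac{2\alpha}{\epsilon_b}(\xi-b)}\bigl(\alpha_0-\frac\mu\alpha\bigr)$. Define
\benn
g(b):=\bigl(\beta_1(b)-\epsilon_b\alpha_1(b)\bigr)-\bigl(b-\epsilon_b\alpha_0\bigr),
\eenn
which measures the signed offset between the invariant-line intercept $b-\epsilon_b a$ of the \emph{departure} point and that of the \emph{initial} point $(\alpha_0,b)$. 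With this notation, the two claimed inequalities are precisely $g(\beta_0-\rho)>0$ and $g(\beta_0+\rho)<0$, since $(\alpha_{1*},\beta_{1*})=\phi_c(\alpha_0,\beta_0-\rho)$ and $(\alpha_1^*,\beta_1^*)=\phi_c(\alpha_0,\beta_0+\rho)$.

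First I would record that $g(\beta_0)=0$. This is exactly the periodicity condition \eqref{eq:cand4} combined with \eqref{eq:cand_periodic}, which asserts that for the candidate orbit the departure point $(\alpha_1,\beta_1)$ lies on the \emph{same} invariant line \eqref{eq:inv_lines} as the landing point $(\alpha_0,\beta_0)$, i.e. $\beta_1-\epsilon_b\alpha_1=\beta_0-\epsilon_b\alpha_0$. Hence both inequalities would follow at once from showing that $g$ is strictly decreasing through $b=\beta_0$, together with the smallness of $\rho$ already assumed in Lemma \ref{lem:ret1}.

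The core of the argument is therefore the sign of $g'(\beta_0)$. Differentiating, using $\frac{d}{db}e^{-\frac{2\alpha}{\epsilon_b}(\xi-b)}=\frac{2\alpha}{\epsilon_b}e^{-\frac{2\alpha}{\epsilon_b}(\xi-b)}$ and the identity $\mu-\alpha\alpha_1(b)=(\mu-\alpha\alpha_0)e^{-\frac{2\alpha}{\epsilon_b}(\xi-b)}$, one obtains
\benn
g'(b)=-2+2\bigl(\mu-\alpha\alpha_1(b)\bigr),
\eenn
so $g'(\beta_0)<0$ is equivalent to $\mu-\alpha\alpha_1<1$. The naive monotonicity bound $\mu-\alpha\alpha_1<\mu-\alpha\alpha_0$ is too weak, since $\alpha_0$ may be small and then $\mu-\alpha\alpha_0\approx\mu>1$. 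Instead the plan is to eliminate $\alpha_0$ via periodicity. Setting $u_0:=\xi-\beta_0>0$, $\theta:=\tfrac{2\alpha}{\epsilon_b}u_0>0$ and $|c|:=\tfrac\mu\alpha-\alpha_0$ (note $|c|>0$ is forced by $g(\beta_0)=0$ and $u_0>0$, as the landing point lies below the $a$-nullcline), the relation $g(\beta_0)=0$ reads $2u_0=\epsilon_b|c|(1-e^{-\theta})$, while $\mu-\alpha\alpha_1=\alpha|c|e^{-\theta}$. Substituting the former into the latter reduces the target inequality $\alpha|c|e^{-\theta}<1$ to the purely elementary statement $(\theta+1)e^{-\theta}<1$, i.e. $e^{\theta}>1+\theta$, which holds for every $\theta>0$ by strict convexity of the exponential. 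This gives $g'(\beta_0)<0$ unconditionally, and by continuity $g'<0$ on a neighbourhood of $\beta_0$, whence $g(\beta_0-\rho)>0>g(\beta_0+\rho)$ for $\rho$ small.

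I expect the main obstacle to be exactly this last reduction: the direct monotonicity estimate does not close, and one must use the periodicity constraint to trade the unknown intercept $|c|$ for the clean convexity inequality $e^{\theta}>1+\theta$. Once that substitution is made, the remaining steps are routine, and the smallness of $\rho$ inherited from Lemma \ref{lem:ret1} handles the fact that $g'$ need not stay negative away from $\beta_0$.
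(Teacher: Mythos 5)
Your proof is correct, and it takes a genuinely different route from the paper's. The paper argues softly and geometrically: it invokes uniqueness of solutions for the planar slow flow on $\{x_2=0=y_2\}$, observes that the trajectory launched from $(\alpha_0,\beta_0-\rho)$ initially dips below the invariant line of slope $\epsilon_b$ through its starting point, and then uses continuity in $\rho$ (the perturbed trajectory must end near $(\alpha_1,\beta_1)$ and hence re-cross that line) to conclude that the departure point lies above it, with the second inequality handled symmetrically. You instead package both inequalities as sign conditions on the scalar function $g(b)=(\beta_1(b)-\epsilon_b\alpha_1(b))-(b-\epsilon_b\alpha_0)$, obtain $g(\beta_0)=0$ from \eqref{eq:cand4} and \eqref{eq:cand_periodic}, and differentiate the explicit formula for $\phi_c$ to get $g'(b)=-2+2(\mu-\alpha\alpha_1(b))$; the key step of eliminating $\alpha_0$ via the same periodicity relation, which turns $g'(\beta_0)<0$ into $(1+\theta)e^{-\theta}<1$ with $\theta=\tfrac{2\alpha}{\epsilon_b}(\xi-\beta_0)>0$, checks out (I verified $\mu-\alpha\alpha_1(b)=(\mu-\alpha\alpha_0)e^{-\theta(b)}$, the relation $2(\xi-\beta_0)=\epsilon_b(\mu/\alpha-\alpha_0)(1-e^{-\theta})$, and the final reduction). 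You are also right that the naive bound $\mu-\alpha\alpha_1<\mu-\alpha\alpha_0$ cannot close the argument, since the canard candidate forces $\mu-\alpha\alpha_0>1$. What your route buys is that it is fully self-contained and quantitative: it shows $g'(\beta_0)\in(-2,0)$ unconditionally for any canard candidate with $\beta_0<\xi$, giving an explicit local contraction rate in the $b$-direction that feeds directly into the stability argument of Section \ref{ssec:canard_case_map}, and it avoids any appeal to the figure. What the paper's route buys is robustness: the uniqueness-plus-continuity argument does not depend on the explicit form of $\phi_c$ and is reused essentially verbatim for the jump case in Lemma \ref{lem:ret4}, whereas your computation would have to be redone with the map $\phi_j$ there.
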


\begin{proof}
Recall that the candidate orbit is given by the condition 
\benn
\beta_{1}=\epsilon_b\alpha_{1}+\beta_{0}-\epsilon_b\alpha_0.
\eenn
The slow flow on $\{x_2=0=y_2\}$ is two-dimensional, so trajectories cannot
intersect by uniqueness. Consider the slow flow trajectory $\gamma=\gamma(s)$ starting at 
$(\alpha_0,\beta_0-\rho)=\gamma(0)$. Observe that $\gamma(s)$ lies below the line 
given by $\beta=\epsilon_b\alpha_{1}+\beta_{0}-\rho-\epsilon_b\alpha_0$ for $0<s<\ll1$
and has to cross this line again so that it is close to the point $(\alpha_1,\beta_1)$;
see also Figure \ref{fig:fig6}. Note that we have used that $\rho$ is sufficiently small 
in the last step. The geometric crossing condition is equivalent to the algebraic 
condition
\benn
\beta_{1*}>\epsilon_b\alpha_{1*}+\beta_{0}-\rho-\epsilon_b\alpha_0
\eenn
as shown in Figure \ref{fig:fig6}(b). The second part is proven similarly, except that 
we notice that a trajectory starting 
at $(\alpha_0,\beta_0+\rho)$ must lie below the line 
$\beta=\epsilon_b\alpha_{1}+\beta_{0}+\rho-\epsilon_b\alpha_0$ when it reaches a 
neighbourhood of $(\alpha_1,\beta_1)$.
\end{proof}

Finally, we can proceed to prove the first part of the main result.

\begin{proof}(of Theorem \ref{thm:main_intro}, canard case)
The existence of the candidate $\psi_0$ is just a consequence of Corollary \ref{cor:sing_orbit}. 
To analyze the perturbation $\epsilon\in(0,\epsilon_0]$, we have to consider the global 
Poincar\'{e} map near the candidate orbit $\psi_0$. Fix a suitable small $\rho>0$ and 
define a cross-section
\benn
\Sigma_0:=\{(a,b,x,y)\in\bar{\cD}:a=\alpha_0+\rho,
b\in[\beta_0-\rho,\beta_0+\rho],x\in[0,\rho],y\in[0,\rho]\},
\eenn 
which is transverse to the flow on $\cS_{2,\epsilon}^{a-}$; the existence of such a section
follows from Proposition \ref{prop:second_chart_prop}, Fenichel theory and the transversality 
of the slow flow on $\cS_{2,0}^{a-}$ to $\{a=\alpha_0,b\in [\beta_0-\rho,\beta_0+\rho]\}$. 
Define another section
\benn
\Sigma_1:=\left\{(a,b,x,y)\in\bar{\cD}:a\in[\alpha_1-\rho,\alpha_1+\rho],
b\in[\beta_1-\rho,\beta_1+\rho],x=k\epsilon,y=\frac{x^2}{3ab}\right\},
\eenn   
where $k>0$ is a suitable constant. The flow induced map $\phi_{01}:\Sigma_0\ra\Sigma_1$ is 
a diffeomorphism due the canard case from Proposition \ref{prop:tc_canards} and 
since the exit from $\cS_{2,\epsilon}^{r+}$ is described by the center flow in chart 
$\kappa_1$ in Proposition \ref{prop:approach}(C4). For the global returns consider the section
\benn
\Sigma_2:=\left\{(a,b,x,y)\in\bar{\cD}:a\in[\alpha_0-\rho,\alpha_0+\rho],
b\in[\beta_0-\rho,\beta_0+\rho],x=k\epsilon,y=\frac{x^2}{3ab}\right\}.
\eenn 
The flow induced map $\phi_{01}:\Sigma_1\ra\Sigma_2$ is a diffeomorphism 
by Fenichel's Theorem applied to $\cC_0$. The global flow is approximated by the 
flow on attracting slow manifold $\cC_\epsilon$ which makes Proposition 
\ref{prop:large} applicable. Since $2\alpha_0\beta_0<1$ and $\beta_0<\xi$, it 
follows from Proposition \ref{prop:approach}(C1) that $\phi_{2,0}:\Sigma_2\ra \Sigma_0$ 
is a diffeomorphism defined via trajectories following the dynamics of the 
center-stable manifold $\cM_1$ in the chart $\kappa_1$. Note that $\Sigma_0$ is 
slightly shifted with $a=\alpha_0+\rho$ from the base point of the candidate orbit 
to avoid that points of the global large loops land exactly on the section. The global 
return map
\be
\label{eq:global_map_final}
\phi=\phi_{20}\circ\phi_{12}\circ\phi_{01}:\Sigma_0\ra\Sigma_0
\ee 
is exponentially contracting in the $(x,y)$-directions since (I) $\cC_\epsilon$ for 
$(x,y)$ bounded away from $(0,0)$ is attracting, (II) trajectories follow 
$S_{2,0}^{a-}$ and $S_{2,0}^{r+}$ in the chart $\kappa_2$ and (III) trajectories connect 
to $\cC_\epsilon$ in the entrance and exit chart $\kappa_1$. If we can show that the map 
$\phi$ also contracts along the $b$-direction the result will follow.

The contraction in the $b$-direction can be derived by using Lemmas \ref{lem:ret1}-\ref{lem:ret2}. 
Indeed, consider first the point $(\alpha_0,\beta_0-\rho)$ then by Lemma \ref{lem:ret2} the 
image $(\alpha_{1*},\beta_{1*})=\phi_c(\alpha_0,\beta_0-\rho)$ lies between the lines which 
are invariants for $(\alpha_0,\beta_0)$ and $(\alpha_0,\beta_0-\rho)$ for the global flow 
from Proposition \ref{prop:large}; see also Figure \ref{fig:fig6}. By Lemma \ref{lem:returns} 
the global return of $(\alpha_{1*},\beta_{1*})$ governed by the flow on $\cC_0$ ends at a point 
$(\alpha_{2*},\beta_{2*})$ with $\alpha_{2*}<\alpha_0$ which again lies between the same two lines. 
The slow flow from $(\alpha_{2*},\beta_{2*})$ back to a section 
$\{a=\alpha_0,b\in[\beta_0-\rho,\beta_0]\}$ does not change this property since the $b$-coordinate 
on $\cS_{2,\epsilon}^{a-}$ increases. 

For sufficiently small $\rho$ the same argument applies for a point 
$(\alpha_0+\rho,\beta_0-\rho)\in\Sigma_0$. Indeed, as for $(\alpha_0,\beta_0-\rho)$ one may 
consider a point $(\alpha_0,\beta_0+\rho)$ with the minor modification that we start with 
a point in the interior of the open set between $(\alpha_0,\beta_0)$ and $(\alpha_0,\beta_0+\rho)$ 
in $\Sigma_0$ and argue in backward-time {i.e.}~points with fixed $\alpha_0$ lying above 
$(\alpha_0,\beta_0)$ move away from the $\beta_0$ in backward time. The same applies for 
points with $\alpha_0+\rho$ lying on $\Sigma_0$.

Hence the full map $\phi$ also contracts along the $b$-direction. The existence of an 
attracting fixed point now follows, {e.g.}~from the Banach fixed point theorem. This fixed
point is precisely the intersection of an orbit $\psi_\epsilon$ with $\Sigma_0$.
\end{proof}

In the proof we could have inserted another section between $\Sigma_0$ and $\Sigma_1$ to 
describe the exit to $\cC_\epsilon$ via Proposition \ref{prop:approach}(C4) separately. 
Alternatively, we could also have removed $\Sigma_2$ and treated the transition map from 
$\Sigma_1$ to $\Sigma_0$ at once. 

\subsection{The Jump Case}
\label{ssec:jump_case_map}

Although a similar argument as for the canard case can be followed, we have to 
replace Lemma \ref{lem:ret1} and \ref{lem:ret2}. Consider the slow 
flow \eqref{eq:Olsen1_bu_sf_simple} and let 
$\phi_j:[a^*,\I)\times[b^*,\xi)\ra [a^*,\I)\times[\xi,\I)$ denote the slow flow 
map for the jump case for an initial condition $(a,b)$ with $2ab<1$ and 
$b<\xi$ so that 
\benn
\phi_j(a,b)=\left(\frac{\mu}{\alpha}+e^{-\frac{\alpha}{\epsilon_b}
\left(\xi-b\right)}\left(a-\frac{\mu}{\alpha}\right),\xi\right).
\eenn
We assume that $\mu$ is chosen so that the candidate orbit constructed in 
Section \ref{ssec:jump_candidate} exists. Let $(\alpha_0,\beta_0)$ denote the landing 
point of this singular periodic orbit on $\cS_{2,0}^{a-}$ and, as before, let 
$(\alpha_1,\beta_1)=\phi_j(\alpha_0,\beta_0)$; see also Figure \ref{fig:fig10}. Note that
we always have $\beta_1=\xi$ for the jump case and hence we do not have to control
the $b$-coordinate.

\begin{figure}[htbp]
\psfrag{a}{$a$}
\psfrag{b}{$b$}
\psfrag{b=xi}{$\beta_2=\xi$}
\psfrag{2ab=1}{$2ab=1$}
\psfrag{a0b0+}{\scriptsize{$(\alpha_0,\beta_0+\rho)$}}
\psfrag{a0b0-}{\scriptsize{$(\alpha_0,\beta_0-\rho)$}}
\psfrag{a0b0}{\scriptsize{$(\alpha_0,\beta_0)$}}
\psfrag{a1b1+}{\scriptsize{$(\alpha_1^*,\beta_1^*)$}}
\psfrag{a1b1}{\scriptsize{$(\alpha_1,\beta_1)$}}
\psfrag{a1b1-}{\scriptsize{$(\alpha_{1*},\beta_{1*})$}}
\psfrag{a2b2+}{\scriptsize{$(\alpha_2^*,\beta_2^*)$}}
\psfrag{a2b2-}{\scriptsize{$(\alpha_{2*},\beta_{2*})$}}
\psfrag{slowflow}{slow flow}
\psfrag{line+}{\scriptsize{$b=\epsilon_b(a-\alpha_0)+\beta_0+\rho$}}
\psfrag{line}{\scriptsize{$b=\epsilon_b(a-\alpha_0)+\beta_0$}}
\psfrag{line-}{\scriptsize{$b=\epsilon_b(a-\alpha_0)+\beta_0-\rho$}}
\psfrag{alabel}{(a)}
\psfrag{blabel}{(b)}
	\centering
		\includegraphics[width=1\textwidth]{./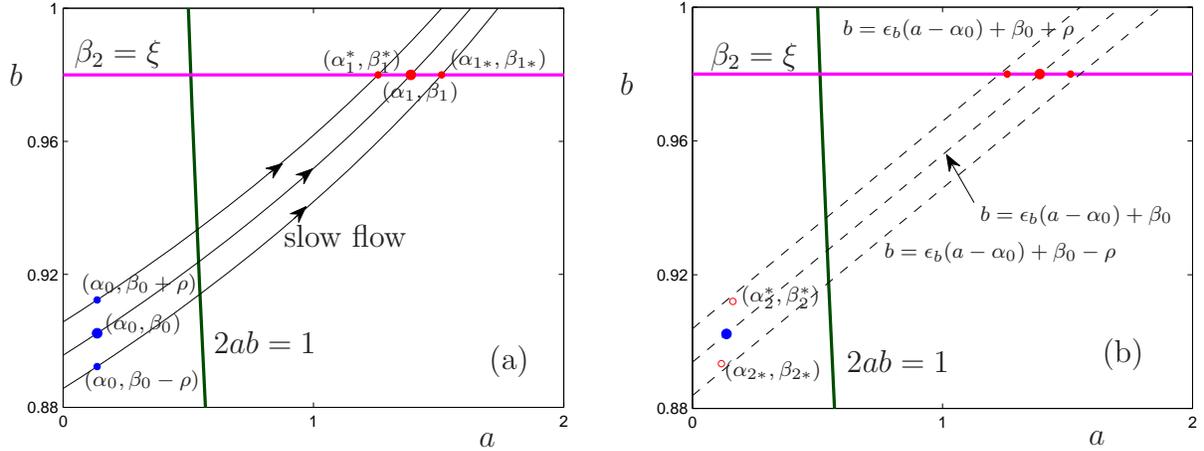}
	\caption{\label{fig:fig10}Numerical illustration of the results from Lemma \ref{lem:ret4}; 
	slow flow map for the jump case. 
	Parameter values are $\epsilon_b=0.062$, $\kappa=3.93$, 
	$\xi=0.98$, $\alpha=0.37$, $\mu=1.3$. (a) Slow subsystem phase space with three 
	orbits (black curves) containing the three points (blue) $a=\alpha_0$, 
	$b=\beta_0-\rho,\beta_0,\beta_0+\rho$ for $\rho=0.01$ with $\alpha_0\approx 0.1362$ 
	and $\beta_0\approx 0.9023$. The image points under the slow flow map $\phi_j$ 
	(red) are shown as well. (b) Phase space with the three invariant lines 
	(dashed black, defined by \eqref{eq:inv_lines}). The thick points (blue/red) correspond 
	to the singular periodic orbit whereas the two circles (red) correspond to the images 
	under the global map defined by \eqref{eq:Olsen2_sf_big}.}
\end{figure}

\begin{lem}
\label{lem:ret4}
Let $\rho>0$ be sufficiently small and let $\phi_j(\alpha_0,\beta_0-\rho)=
(\alpha_{1*},\xi)$ and 
$\phi_c(\alpha_0,\beta_0+\rho)=(\alpha^*_{1},\xi)$. Then we have
\benn
\frac{1}{\epsilon_b}(\xi-\beta_{0}+\rho+\epsilon_b\alpha_0)>\alpha_{1*}
\qquad 
\text{and}
\qquad
\frac{1}{\epsilon_b}(\xi-\beta_{0}-\rho+\epsilon_b\alpha_0)<\alpha^{*}_1.
\eenn  
\end{lem}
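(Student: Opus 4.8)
The plan is to reduce each of the two inequalities to a sign condition on a single scalar function of $v:=\xi-\beta_0>0$, and to exploit the fact that the jump candidate of Section~\ref{ssec:jump_candidate} is pinned down by the invariant-line closure. First I would record the geometric content: the invariant line \eqref{eq:inv_lines} through $(\alpha_0,\beta_0\mp\rho)$ meets the level $\{b=\xi\}$ at the point with $a$-coordinate $\tfrac{1}{\epsilon_b}(\xi-\beta_0\pm\rho+\epsilon_b\alpha_0)$, so the two claims assert exactly that the slow-flow image $\alpha_{1*}$ lies to the left of the line through $(\alpha_0,\beta_0-\rho)$ while $\alpha^*_{1}$ lies to the right of the line through $(\alpha_0,\beta_0+\rho)$. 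Since $\phi_j$ fixes $b=\xi$, no $b$-coordinate needs tracking, so this is the jump-case analogue of Lemma~\ref{lem:ret2}. Setting $c:=\mu/\alpha-\alpha_0>0$ (positive since $\alpha_0<\mu/\alpha$), I introduce
\benn
F(w):=\frac{w}{\epsilon_b}-c\left(1-e^{-\alpha w/\epsilon_b}\right),\qquad
G(w):=\frac{w}{\epsilon_b}-c\left(1-e^{-2\alpha w/\epsilon_b}\right).
\eenn
Substituting the explicit $\phi_j$-formula into the first claim and using $\alpha_0-\mu/\alpha=-c$, I rewrite it as $F(v+\rho)>0$; substituting the $\phi_c$-formula into the second claim (whose maximal-delay exponent carries the factor $2$) rewrites it as $G(v-\rho)<0$. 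The whole proof then reduces to studying $F$ and $G$ near $w=v$.

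For the first inequality the key observation is that the closure \eqref{eq:cand4a} together with periodicity \eqref{eq:cand_periodica} says exactly that the departure point $(\alpha_1,\xi)$ lies on the invariant line through $(\alpha_0,\beta_0)$, i.e.\ $\alpha_1=\tfrac{v}{\epsilon_b}+\alpha_0$; equating this with the slow-flow value \eqref{eq:cand1a} yields the identity $F(v)=0$. Existence of the candidate with $\beta_0<\xi$ (Corollary~\ref{cor:sing_orbit1}) forces $\mu-\alpha\alpha_0>1$: otherwise $F'(w)=\tfrac1{\epsilon_b}\bigl(1-c\alpha\,e^{-\alpha w/\epsilon_b}\bigr)>0$ for all $w>0$ and $w=0$ would be the only zero of $F$, contradicting $v>0$. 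This is precisely the slope condition $m_1>m_2$ used in the proof of (P4) in Lemma~\ref{lem:find_canard_jump}, inherited by $W_j$ through Lemma~\ref{lem:find_jump_jump}. Hence $F$ has a single interior minimum, $F(0)=0$ and $F\to+\I$, so $v$ is its unique positive zero and lies on the increasing branch; therefore $F$ is strictly increasing on $[v,\I)$ and $F(v+\rho)>F(v)=0$, which is the first inequality.

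For the second inequality I would simply compare $G$ with $F$: a one-line computation gives
\benn
G(w)-F(w)=c\,e^{-\alpha w/\epsilon_b}\left(e^{-\alpha w/\epsilon_b}-1\right)<0\qquad\text{for }w>0,
\eenn
so $G(v)<F(v)=0$. Because $G$ has the same shape ($G(0)=0$, one interior minimum, $G\to+\I$), $G$ is negative on the whole interval $(0,w_1)$ up to its unique positive zero $w_1$, and $G(v)<0$ places $v<w_1$; thus $G(v-\rho)<0$ for every $\rho\in(0,v)$, in particular for small $\rho$. Here I take $\alpha^*_{1}$ to be the $a$-component of $\phi_c(\alpha_0,\beta_0+\rho)$, the only coordinate entering the claim, and the admissibility $\beta_0+\rho<\xi$ required for $\phi_c$ to be defined is exactly $\rho<v$.

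The main obstacle, and the one genuinely new point beyond Lemmas~\ref{lem:ret1}--\ref{lem:ret2}, is recognizing the candidate-closure relation as the clean algebraic identity $F(v)=0$ and then locating $v$ on the increasing branch of $F$ via $\mu-\alpha\alpha_0>1$; once this monotonicity is established the first estimate is immediate. The second estimate is then even easier, since the factor-$2$ delay exponent appearing in $\phi_c$ only enlarges the margin ($G<F$ pointwise), so no separate shape analysis of $G$ is really needed beyond the comparison $G(v)<F(v)=0$.
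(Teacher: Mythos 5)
Your proof is correct, and it takes a genuinely different route from the paper's. The paper dispatches this lemma in two lines: it invokes the same soft argument as Lemma \ref{lem:ret2}, namely that the planar slow flow \eqref{eq:Olsen1_bu_sf_simple} on $\cS_{2,0}^{a-}$ has non-crossing trajectories (uniqueness), combined with a continuity argument for small $\rho$ and an appeal to Figure \ref{fig:fig10}(b). You instead exploit the explicit integrability of the slow flow: you reduce both inequalities to sign conditions on the scalar functions $F$ and $G$, extract the closure identity $F(\xi-\beta_0)=0$ from \eqref{eq:cand1a}, \eqref{eq:cand2a}, \eqref{eq:cand4a}, \eqref{eq:cand_periodica}, deduce $\mu-\alpha\alpha_0>1$ (the slope condition underlying (P4) of Lemma \ref{lem:find_canard_jump}) from the mere existence of a positive zero, and then read off both estimates from the single-interior-minimum shape of $F$ together with the pointwise comparison $G<F$; a numerical check against the values in Figure \ref{fig:fig10} ($v\approx 0.0777$, $c\alpha\approx 1.25$, $w_{\min}\approx 0.037<v$) confirms the identity and the branch location. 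What your approach buys: it is self-contained and quantitative, it yields the first inequality for \emph{all} $\rho>0$ and the second for all $0<\rho<\xi-\beta_0$ rather than only for ``sufficiently small'' $\rho$, and it gracefully resolves the $\phi_c$/$\phi_j$ inconsistency in the statement (the image of $\phi_c$ cannot have $b$-coordinate $\xi$, so $\phi_j$ is presumably intended; your argument covers both readings, since $G(v-\rho)<F(v-\rho)<0$). What the paper's approach buys: brevity, and robustness — the non-crossing argument needs no closed-form solution, so it would survive modifications of the slow flow for which your explicit formulas are unavailable.
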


\begin{proof}
The proof follows the same idea as in Lemma \ref{lem:ret2} {i.e.} a continuity 
argument for small $\rho$ and the standard uniqueness result for ODEs applied
to the planar slow flow on $S_{2,0}^{a-}$; see also Figure \ref{fig:fig10}(b).
\end{proof}

Now we may finish the proof for the second part of the main result.

\begin{proof}(of Theorem \ref{thm:main_intro}, jump case and final result)
The same steps as in the proof of the canard case in Section 
\ref{ssec:canard_case_map} can be applied upon noticing the 
following aspects:
\begin{itemize}
 \item We always have $\beta_1=\xi$.
 \item Applying the case (C4) from Proposition \ref{prop:approach} is 
still valid due to Corollary \ref{cor:jump_canard_chart1}.
 \item Instead of the canard case, we have to apply the jump case of Proposition \ref{prop:tc_canards}.
 \item The $b$-direction contraction from Lemma \ref{lem:ret2} is replaced by Lemma \ref{lem:ret4}.
\end{itemize}
As the remaining elements of the jump case proof are similar, we do 
not provide the details here. To conclude that there is indeed an open set 
of $\mu$-values for the which the canard and jump case can be obtained, just via 
a variation of $\delta$, we may apply Corollary \ref{cor:sing_orbit}.
\end{proof}

\section{Outlook}
\label{sec:outlook}

In addition to the non-classical relaxation oscillations described in Theorem 
\ref{thm:main_intro}, there are several other dynamical regimes of interest in
the Olsen model. We do not provide the full details here and just give a
brief geometric description of the other two cases observed by Olsen as shown 
in Figure \ref{fig:fig1}.\medskip

\begin{figure}[htbp]
\psfrag{af}{\scriptsize{(a)}}
\psfrag{bf}{\scriptsize{(b)}}
\psfrag{ab}{\scriptsize{$a,b$}}
\psfrag{a}{\scriptsize{$a_2$}}
\psfrag{b}{\scriptsize{$b_2$}}
\psfrag{x}{\scriptsize{$x$}}
\psfrag{y}{\scriptsize{$y$}}
\psfrag{xy}{\scriptsize{$x_2,y_2$}}
\psfrag{L0}{\scriptsize{$\cL_0$}}
\psfrag{C0}{\scriptsize{$C_0$}}
\psfrag{gm}{\scriptsize{$\gamma_m$}}
\psfrag{xy0}{\scriptsize{$\{x_2=0=y_2\}$}}
\psfrag{bxi}{\scriptsize{$b_2=\xi$}}
\psfrag{Ca1}{\scriptsize{$\cU_0^a$}}
\psfrag{Cr1}{\scriptsize{$\cU_0^r$}}
\psfrag{Ca2}{\scriptsize{$\cV_0^a$}}
\psfrag{Cr2}{\scriptsize{$\cV_0^r$}}
	\centering
		\includegraphics[width=1\textwidth]{./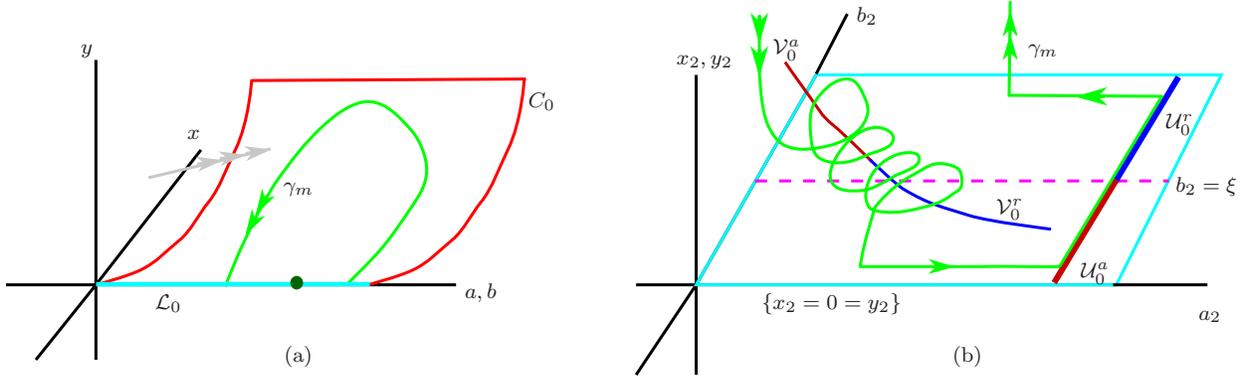}
	\caption{\label{fig:fig9}Sketch of the basic geometry for MMOs 
	inside the region $\cD$ under the assumption $\epsilon_b\ra 0$. (a) Phase space for
	the system \eqref{eq:Olsen2} which captures the large fast loops. The critical 
	manifold $C_0$ (red), two segments of an MMO candidate orbit $\gamma_m$ (green),
	the fold manifold $\cL_0$ (cyan), the submanifold $\{2ab=1,x=0=y\}$ (dark-green dot) and
	the ``super-fast'' attracting dynamics (grey triple arrow) are shown. (b) Phase space
	for \eqref{eq:Olsen2} with $\epsilon_b\ra 0$. The focus is on the slow drift near $\cL_0$ 
	(cyan) and the ``super-slow'' dynamics near the critical manifold $\cU_0=\{a_2=\mu/\alpha\}$
	of \eqref{eq:Olsen1_bu_sf_simple}. The candidate orbit $\gamma_m$ (green), the 
	exchange-of-stability line $\{b_2=\xi\}$ (magenta) and the one-dimensional critical 
	manifold $\cV_0$ of \eqref{eq:Olsen1} are shown; note that 
	$\cV_0\cap\{x_2=0=y_0\}=\emptyset$ {i.e.}~$\cV_0$ lies entirely above 
	the submanifold $\{x_2=0=y_0\}$. Furthermore, the two one-dimensional parts of the
	critical manifold split attracting (dark red) and repelling (blue) parts as 
	$\cU_0=\cU_0^a\cup\cU_0^r$ and $\cV_0=\cV_0^a\cup\cV_0^r$. For a description of
	the dynamics please refer to the text in Section \ref{sec:outlook}.}
\end{figure}

We start with the case of MMOs. Part of the basic idea how MMOs may be generated
can be found in \cite{DesrochesKrauskopfOsinga1}. However, with the results developed
in this paper, we can already give a substantially more detailed description. 

First, we observe that $k_1=0.16$ corresponds to the case in Table \eqref{sec:tr_res}, where 
$\epsilon_b$ is also a small parameter. On a formal level, we still start with the system 
\eqref{eq:Olsen2}, and note that the reasonable assumption $\epsilon_b\epsilon\ll\epsilon_b$
implies that $x$ is still the fastest variable and we may reduce the situation to a ``slow'' 
vector field on the normally hyperbolic part of $\cC_0$. This vector field is still solvable 
explicitly with $0<\epsilon_b\ll1$ as discussed in Section \ref{sec:loops}; see also Figure 
\ref{fig:fig9}(a). The blow-up analysis 
in Section \eqref{sec:main_bu} has to be re-considered as we have to append $\epsilon_b'=0$. 
Let us assume, {i.e.}~we do not prove this conjecture here, that the main dynamical generating
mechanism for the slowest dynamics is governed by the system \eqref{eq:Olsen1}, where $\epsilon_b$
is now another small parameter. Then \eqref{eq:Olsen1} can be viewed as a fast-slow system with
3 fast variables and 1 slow variable. The critical manifold for this system is given by solving
the algebraic equations  
\be
\label{eq:Olsen_outlook}
\begin{array}{rcl}
0&=& \mu-\alpha a_2 -a_2b_2y_2,\\
0&=& b_2x_2-x_2^2 +3a_2b_2y_2-\xi x_2,\\
0&=&x_2^2-y_2-a_2b_2y_2,\\
\end{array}
\ee 
where we have assumed that $\delta=0$ for convenience. The critical manifold described by 
\eqref{eq:Olsen_outlook} is given by one-dimensional curves. One part is given by
\benn
\cV_0:=\left\{x_2=0=y_2,a_2=\frac{\mu}{\alpha}\right\}=\cV_0^a\cup p_{\cV}\cup \cV_0^r,
\eenn 
where $p_{\cV}=\{b_2=\xi,x_2=0=y_2,a_2=\frac{\mu}{\alpha}\}$, $\cV_0^a=\cV_0\cap\{b_2<\xi\}$
and $\cV_0^r=\cV_0\cap\{b_2>\xi\}$; see also Figure \ref{fig:fig9}(b). $\cV_0^a$ is normally 
hyperbolic attracting, $\cV_0^r$ is normally hyperbolic repelling and $p_{\cV}$ is not 
normally hyperbolic. Furthermore, there exists another curve
\benn
\cU_0=\cU_0^a\cup p_{\cU}\cup \cU_0^r\subset \cS^{a+}_{2,0}\cup\{b_2=\xi,2a_2\xi=1\}\cup 
\cS^{r-}_{2,0}
\eenn
as shown in Figure \ref{fig:fig9}(b), where $\cS^{a+}_{2,0}$ and $\cS^{r-}_{2,0}$ are the 
two-dimensional critical manifolds illustrated and discussed in Section \ref{ssec:chart2} and
illustrated in Figure \ref{fig:fig5}. In particular, $\cU_0$ also consists of three parts
where one may check that $\cU_0^a\subset \{b_2>\xi\}$ is normally hyperbolic attracting 
with a linearization of the the fast subsystem having a real negative eigenvalue and a pair of complex 
conjugate eigenvalues with negative real parts. $\cU_0^r\subset \{b_2<\xi\}$ is normally 
hyperbolic repelling with a linearization of the the fast subsystem having a real negative eigenvalue
and a pair of complex conjugate eigenvalues with positive real parts. A (delayed) Hopf 
bifurcation \cite{Neishtadt1,Neishtadt2} occurs at $p_{\cU}$. This mechanism generates 
SAOs via a tourbillon-type mechanism \cite{Desrochesetal} as trajectories spiral around 
$\cU_0$; see also Figure \ref{fig:fig9}(b). More precisely, after a large loop, trajectories 
spiral towards $\cU_0^a$, including a slow drift towards $p_{\cU}$. After the delayed 
Hopf bifurcation, trajectories spiral outwards around $\cU_0^r$.

Then, we note that $\{x_2=0=y_2\}$ is still invariant. Since $\epsilon_b$ is now viewed
as a singular perturbation parameter, we can try to approximate the transition near $\{x_2=0=y_2\}$
towards $\cV_{0}^a$ via the one-dimensional system
\benn
\frac{da_2}{ds}= \mu-\alpha a_2, 
\eenn
which is just \eqref{eq:Olsen1_bu_sf_simple} for $\epsilon_b=0$. Trajectories reach a 
neighbourhood of $\cV_{0}^a$ and then drift slowly towards $p_{\cV}$. One has to prove 
an analogous result to the transcritical passage in Section \ref{sec:delay} near $p_{\cV}$.
Trajectories eventually leave $\cV_0^r$ and start another large loop as shown in 
Figure \ref{fig:fig9}(b). A periodic orbit corresponds to an MMO as shown in Figure 
\ref{fig:fig1}(a).

Although the description of MMOs we have just given is clearly not rigorous, the 
geometric structure suggested by Figure \ref{fig:fig9} indicates that a similar strategy
as carried out in Sections \ref{sec:main_bu}-\ref{sec:retmap} could work to prove the
existence of MMOs. This problem is left open and could be considered in future work.\medskip          

\begin{figure}[htbp]
\psfrag{af}{\scriptsize{(a)}}
\psfrag{bf}{\scriptsize{(b)}}
\psfrag{a}{\scriptsize{$a_2$}}
\psfrag{b}{\scriptsize{$b_2$}}
\psfrag{osc}{\scriptsize{oscillation}}
\psfrag{slide}{\scriptsize{sliding}}
\psfrag{xy}{\scriptsize{$x_2,y_2$}}
\psfrag{xy0}{\scriptsize{$\{x_2=0=y_2\}$}}
\psfrag{bxi}{\scriptsize{$b_2=\xi$}}
	\centering
		\includegraphics[width=1\textwidth]{./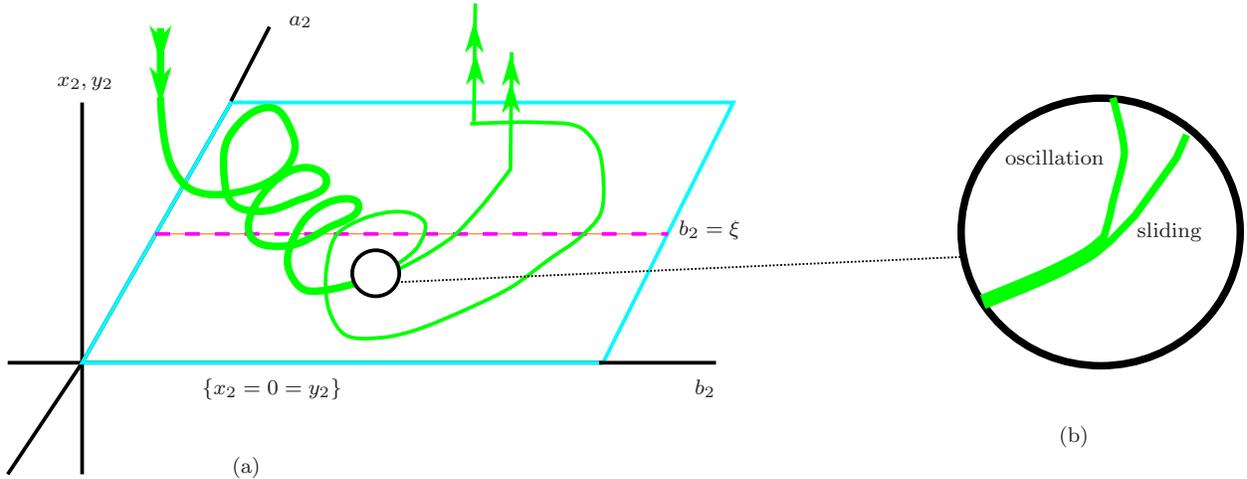}
	\caption{\label{fig:fig11}Sketch of the geometry for a possible chaos generating 
	mechanism in the Olsen model. (a) Phase space
	for \eqref{eq:Olsen2} in the intermediate regime between the non-classical 
	relaxation oscillation case in Figure \ref{fig:fig8} and the MMO case in
	Figure \ref{fig:fig9}. The focus is on the slow drift near $\cL_0$ 
	(cyan). (b) Sketch of the mechanism which causes the strong stretching of trajectories. 
	One part of the orbits tend to make one more oscillation similar to the MMO case. The second
	part starts to ``slide'' on the set $\{x_2=0=y_2\}$ similar to the non-classical relaxation 
	oscillation case. For a more detailed description of the dynamics please refer to the text 
	in Section \ref{sec:outlook}.}
\end{figure}

The time series in Figure \ref{fig:fig1} suggest that there is an intermediate case between
the regime of MMOs and the non-classical relaxation oscillations, where the Olsen model 
is chaotic; in particular, see Figure \ref{fig:fig1}(b).

Several chaos-generating mechanism have been identified for fast-slow systems. We briefly recall
two cases for classical relaxation oscillation system in $\R^3$. One possibility is that jumps from
a fold curve land on an attracting slow manifold where the slow flow has a tangency to the projection
of the fold curve along the fast direction \cite{MKKR}. It has been shown in 
\cite{GuckenheimerWechselbergerYoung} that there exists a near one-dimensional return map
which is similar to a H\'{e}non-type map. However, the basic mechanism for the flow to generate sensitive 
dependence upon initial conditions is that near the tangency orbits are ``split'' 
into different directions. Another possible chaos-generating mechanism has been identified
in \cite{Haiduc1} based upon canard orbits arising from a folded saddle. In this context, there 
is also a ``splitting''-type mechanism. Orbits follow the same canard but jump into different
directions when departing from it, as well as departing from the fold curve where the fold saddle
is based. 

For both mechanisms, there is a region of phase space, where orbits are drastically separated.
In combination with a global return mechanism, one obtains the main ingredients (stretching and folding)
for Smale horseshoe dynamics \cite{GuckenheimerWechselbergerYoung,Haiduc1}.

The Olsen model seems to exhibit a different mechanism, which also induces the drastic separation
of orbits in part of the phase space. Figure \ref{fig:fig11} provides a basic sketch of the 
mechanism which we conjecture. Consider the singular limit $\epsilon=0=\epsilon_b$. Then 
there exists a family of fast subsystem periodic orbits around the repelling critical manifold 
$\cV_0^r$ which may become tangent to the invariant submanifold $\{x_2=0=y_2\}$. If the system 
operates in a parameter regime between MMOs and non-classical relaxation oscillations, it 
could happen that a bundle of trajectories spirals in a region where $\cV_0^r$ (and 
$\cS_{2,0}^{r-}$) are located. During their last SAO before reaching a neighbourhood of 
$\{x_2=0=y_2\}$ some orbits may make one additional SAO, while others will tend to ``slide'' immediately
towards $\{b_2=\xi\}$; see Figure \ref{fig:fig11}. This effect may cause the separation effect 
required to obtain a Smale horseshoe. The global returns are 
still controlled via $\cC_0$. Let us note that this mechanism also shares some 
similarities with grazing-sliding bifurcations of periodic orbits discussed recently in the 
context of non-smooth dynamical systems \cite{PSDS}. We are going to make this relation, and the chaos 
generating mechanism itself, more precise in future work.\medskip

In summary, we have given a precise description of the geometric structure and local asymptotic 
stability of non-classical relaxation oscillations originally discovered by Olsen more than 
30 years ago. In particular, we have seen how the interaction between the two perturbation 
parameters $\epsilon$ and $\delta$ can be exploited to provide a coherent picture for the original 
numerical simulations as well as numerical continuation results obtained in \cite{DesrochesKrauskopfOsinga1} 
for a case between $\delta=0$ and $\delta=K_2\epsilon^2$. Furthermore, we have briefly outlined how the 
analysis could be continued to cover the MMO and chaotic cases. The difficulty of the analysis shows 
that multiple time scale systems, which are not in standard form and possess several singular 
perturbation parameters, provide an interesting challenge for geometric singular perturbation theory. 
\medskip   

\textbf{Acknowledgments:} CK would like to thank the Austrian Academy of Sciences ({\"{O}AW}) 
for support via an APART fellowship. CK and PS would like to thank the European Commission
(EC/REA) for support by a Marie-Curie International Re-integration Grant.

\newpage

\appendix

\section{Normally Hyperbolicity \& Fast-Slow Systems}
\label{ap:fastslow}

We only recall the basic definitions and results about fast-slow systems. There are several 
standard references that detail many parts of the theory 
\cite{Jones,KaperJonesPrimer,MisRoz,Desrochesetal,ArnoldEncy}. A fast-slow system of ordinary 
differential equations (ODEs) is given by:
\be
\label{eq:basic1}
\begin{array}{rcrcl}
\epsilon \dot{x}&=&\epsilon\frac{dx}{d\tau}&=&f(x,y,\epsilon),\\
\dot{y}&=&\frac{dy}{d\tau}&=&g(x,y,\epsilon),\\
\end{array}
\ee 
where $x\in\R^m$ are fast variables, $y\in\R^n$ are slow variables and $0<\epsilon\ll1$ is a 
small parameter representing the ratio of time scales. The maps $f,g$ are assumed to be 
sufficiently smooth. Equation \eqref{eq:basic1} can be 
re-written by changing from the slow time scale $\tau$ to the fast time scale $t=\tau/\epsilon$
\be
\label{eq:basic2}
\begin{array}{lclcr}
x'&=&\frac{dx}{dt}=f(x,y,\epsilon),\\
y'&=&\frac{dy}{dt}=\epsilon ~g(x,y,\epsilon).\\
\end{array}
\ee
The singular limit $\epsilon\ra 0$ of \eqref{eq:basic2} yields the fast subsystem 
ODEs parametrized by the slow variables $y$. Setting $\epsilon\ra 0$ in \eqref{eq:basic1} 
gives a differential-algebraic equation (DAE), called the slow subsystem, on the 
critical manifold $\cC_0:=\{(x,y)\in\R^{m+n}:f(x,y,\epsilon)=0\}$. Concatenations of 
fast and slow subsystem trajectories are called candidates \cite{Benoit2,Haiduc1}. 

A subset $\cS\subset \cC$ is called normally hyperbolic if the $m\times m$ total derivative 
matrix $(D_xf)(p)$ is hyperbolic for $p\in \cS$. A normally hyperbolic subset $\cS$ is attracting 
if all eigenvalues of $(D_xf)(p)$ have negative real parts for $p\in \cS$, $\cS$ is called 
repelling if all eigenvalues have positive real parts and of saddle-type if there are 
positive and negative eigenvalues. On normally hyperbolic parts of $\cC$ the implicit function 
theorem applies to $f(x,y,0)=0$ providing a map $h_0(y)=x$ so that $\cC$ can be expressed (locally) 
as a graph. 

\begin{thm}[Fenichel's Theorem \cite{Fenichel4,Jones,Tikhonov}]
\label{thm:fenichel1}
Suppose $\cS=\cS_0$ is a compact normally hyperbolic submanifold (possibly with boundary) of the 
critical manifold $\cC_0$. Then, for $\epsilon>0$ sufficiently small, there exists a locally 
invariant manifold $\cS_\epsilon$ diffeomorphic to $\cS_0$. $\cS_\epsilon$ has a distance of 
$\cO(\epsilon)$ from $\cS_0$ and the flow on $\cS_\epsilon$ converges to the slow flow as 
$\epsilon \to 0$.
\end{thm}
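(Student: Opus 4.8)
The plan is to recast this singular perturbation statement as a persistence result for a normally hyperbolic invariant manifold of an extended flow, and then invoke the standard invariant manifold theory of Fenichel \cite{Fenichel4} and Hirsch, Pugh and Shub \cite{HirschPughShub}. First I would append the trivial equation $\epsilon'=0$ to the fast system \eqref{eq:basic2}, obtaining an autonomous system on $\R^{m+n+1}$ in the variables $(x,y,\epsilon)$. At $\epsilon=0$ the fast subsystem reads $x'=f(x,y,0)$, $y'=0$, $\epsilon'=0$, so every point of $\cS_0\subset\cC_0$ is an equilibrium and the set $\cM_0:=\cS_0\times\{0\}$ is invariant. Enlarging it slightly in the parameter direction, the candidate persisting object is $\tilde{\cM}_0:=\cS_0\times[0,\epsilon_0)$.

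The key step is to verify that $\tilde{\cM}_0$ is normally hyperbolic with the spectral gap required by the persistence theorem. The tangent space to $\tilde{\cM}_0$ splits into the $n$ directions along $\cS_0$ and the one-dimensional $\epsilon$-direction; along all of these the linearized flow is either zero (at $\epsilon=0$) or of order $\cO(\epsilon)$, since the tangential drift is carried only by $y'=\epsilon g$ together with $\epsilon'=0$. The complementary normal bundle is precisely the $m$-dimensional fast direction, on which the linearization is $(D_xf)(p)$, hyperbolic by hypothesis with eigenvalues bounded away from the imaginary axis uniformly on the compact set $\cS_0$. Hence the normal contraction and expansion rates are $\cO(1)$ while the tangential rates tend to zero with $\epsilon$, so $\tilde{\cM}_0$ is $r$-normally hyperbolic for every $r$, which also guarantees that the persisting manifold is as smooth as the data.

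The genuinely delicate point, and the step I expect to be the main obstacle, is that $\cS_0$ is compact \emph{with boundary} and only \emph{locally} invariant, so the off-the-shelf persistence theorem for boundaryless invariant manifolds does not apply verbatim. The standard remedy, following Fenichel, is to modify the vector field outside a neighbourhood of $\tilde{\cM}_0$ by a smooth cutoff so that $\tilde{\cM}_0$ becomes an overflowing (respectively inflowing) invariant manifold of a globally defined, compactly supported perturbation; for the saddle case one treats the manifold together with its local stable and unstable fibre bundles. The persistence theorem for normally hyperbolic overflowing invariant manifolds then produces a nearby locally invariant manifold $\tilde{\cM}$ diffeomorphic to $\tilde{\cM}_0$ and depending smoothly on $\epsilon$. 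Restricting to a fixed small $\epsilon>0$, the slice $\tilde{\cM}\cap\{\epsilon=\mathrm{const}\}$ defines $\cS_\epsilon$, which is locally invariant for the original flow and diffeomorphic to $\cS_0$.

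Finally, the quantitative conclusions follow by soft arguments. Since $\cS_0$ is normally hyperbolic it is locally a graph $x=h_0(y)$ over the slow variables, so the persisting manifold inherits a graph representation $x=h(y,\epsilon)$ with $h(\cdot,0)=h_0$; smoothness in $\epsilon$ together with this matching at $\epsilon=0$ yields the $\cO(\epsilon)$ distance estimate by a first-order Taylor expansion. The convergence of the flow on $\cS_\epsilon$ to the slow flow is obtained by substituting $x=h(y,\epsilon)$ into $y'=\epsilon g$, passing to the slow time $\tau=\epsilon t$, and letting $\epsilon\ra 0$, which reduces the restricted vector field to $\dot{y}=g(h_0(y),y,0)$, i.e.~the slow subsystem on $\cC_0$. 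Everything beyond the boundary treatment in the cutoff step is a routine verification of the hypotheses of a known invariant manifold theorem.
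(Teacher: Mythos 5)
This statement is quoted in Appendix~\ref{ap:fastslow} as classical background, with references to \cite{Fenichel4,Jones,Tikhonov}; the paper supplies no proof of its own, so there is nothing internal to compare against. Your outline is a faithful reproduction of the standard argument in those references (extend by $\epsilon'=0$, observe that $\cS_0\times\{0\}$ is a manifold of equilibria whose only non-trivial normal directions carry the hyperbolic linearization $D_xf$, cut off to obtain overflowing/inflowing invariance near the boundary, apply the Fenichel/Hirsch--Pugh--Shub persistence theorem, and read off the $\cO(\epsilon)$ graph estimate and the slow-flow limit), and I see no gap in it. The only imprecision worth flagging is that $\cS_0\times[0,\epsilon_0)$ is not itself invariant for $\epsilon>0$, so it cannot literally be called normally hyperbolic; it serves as the initial graph over $(y,\epsilon)$ whose iterates under the graph transform converge to the actual locally invariant manifold, which is how the cited proofs phrase it.
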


The distance between $\cS_\epsilon$ and $\cS_0$ can be expressed in the Hausdorff metric or a
suitable $C^r$-norm (using the map $h_0$ and its perturbation $h_\epsilon$). A manifold 
$\cS_\epsilon$ provided by Fenichel's Theorem is called a slow manifold. Slow manifolds 
are usually not unique but different slow manifolds lie at a distance $\cO(e^{-K/\epsilon})$ 
for some constant $K>0$. Often we shall we shall make a choice of compact subset and choice 
of slow manifold without further notice, indicating that the choice does not matter for the 
asymptotic analysis performed.
 
A trajectory is called a maximal canard if it lies in the intersection of an attracting and 
a repelling slow manifold. Canards were first investigated by a group of French mathematicians 
\cite{BenoitCallotDienerDiener} using nonstandard analysis. Later also asymptotic 
\cite{Eckhaus,BaerErneux1} and geometric \cite{DumortierRoussarie,KruSzm3} methods have been 
developed to understand canard orbits.

\section{Geometric Desingularization}
\label{ap:blowup}

Here we shall briefly review the basic strategy for the blow-up approach for geometric 
desingularization of fast-slow systems. Details on the classical, single-scale, method
can be found {e.g.}~in \cite{Dumortier1}. The classical blow-up was first introduced into
fast-slow systems in \cite{DumortierRoussarie}. Further developments can be found in 
\cite{KruSzm3}; see also the introduction in \cite{KruSzm1}.\medskip

The starting point is to write the system \eqref{eq:basic2} as follows
\be
\label{eq:bu_start}
\begin{array}{lcl}
x'&=&f(x,y,\epsilon),\\
y'&=&\epsilon~ g(x,y,\epsilon),\\
\epsilon'&=&0.\\
\end{array}
\ee
Let us denote the vector field defined by \eqref{eq:bu_start} as $X$ {i.e.}~$X$ is a 
mapping 
\benn
X:\R^{m+n}\times [0,\epsilon_0)\ra T\left(\R^{m+n}\times [0,\epsilon_0)\right)
\eenn
where $T(\cdot)$ indicates the tangent bundle. Further equations for parameters could 
be appended to \eqref{eq:bu_start} as well, if necessary. Suppose 
\eqref{eq:bu_start} has an equilibrium point for $\epsilon=0$, or more generally a 
submanifold $\cM=\{f=0\}$ of equilibria in $\R^{m+n}\times \{\epsilon=0\}$. If 
$(D_xf)(p)$ is not a hyperbolic matrix for each $p\in\cM$, the equilibrium (manifold)
$\cM$ is degenerate and classical linearization results do not apply directly to 
\eqref{eq:bu_start}. 

The blow-up technique is based upon replacing $\cM$ by a, usually more complicated,
manifold $\bar{\cM}$ and using a map
\benn
\Phi:\bar{\cM}\ra \cM
\eenn
which induces a vector field $\bar{X}$ on $\bar{\cM}$ via the pushforward $\Phi_*$
and the condition $\Phi_*(\bar{X})=X$. Using a good choice for $\bar{\cM}$,
one may often analyze the blown-up vector field $\bar{X}$ as it is possible that
invariant manifolds of $\bar{X}$ are now (partially) hyperbolic. 

As an example, consider the classical case when $(x,y)\in\R^2$ and $f(x,y)=y-x^2$ 
is the (truncated) normal form of a fold bifurcation. The origin $(x,y,\epsilon)=(0,0,0)$ 
is the important non-hyperbolic point and the standard choice is to use a sphere
for geometric desingularization $\bar{\cM}:=S^2\times [0,r_0)$ for some constant 
$r_0>0$ or $r_0=+\I$. Therefore, one has essentially inserted a sphere at the origin;
see also \cite{DumortierRoussarie,KruSzm1}. 

Although one could try to find a suitable global parametrization of $\bar{\cM}$, this
is usually not very convenient for calculations. Instead, one uses charts 
$\kappa_j:\bar{\cM}\ra \R^{m+n+1}$ of $\bar{\cM}$ for the calculations,
 which is illustrated by the following important diagram
\benn
\xymatrix{& \bar{M} \ar[ld]_{\kappa_j} \ar[rd]^{\Phi} &  \\
 \R^{m+n+1} \ar[rr]^{\varphi_j}  &  & \R^{m+n+1}  }
\eenn
which commutes. Hence, one may just try to calculate the map $\varphi_j$ and obtain
a vector field on $\R^{m+n+1}$ by applying the coordinate change 
\benn
(x_j,y_j,\epsilon_j)=\varphi^{-1}(x,y,\epsilon).
\eenn
One may often, via a good choice of $\bar{\cM}$ and chart maps $\kappa_j$, compute the
vector fields in $(x_j,y_j,\epsilon_j)$-coordinates. The same remark applies to the
transition maps between different charts $\kappa_{jk}$. Section \ref{sec:main_bu} 
carries out these calculations for a submanifold of fold points in the Olsen model. 
 
\section{An Auxiliary Center Manifold Reduction}
\label{ap:cm1}

Here we present the details for the center manifold calculation for \eqref{eq:bu_kappa1_vf_cm3D}. 
We drop the sub- and superscripts of $(r_1,y_1,\epsilon_1)$ and $(\alpha_1^*,b_1^*)$ for notational 
convenience; all variables and constants used in this section are temporary and should not be 
confused with notation within the main manuscript. Re-ordering the variables and translating 
\eqref{eq:bu_kappa1_vf_cm3D} via $Y=y-1/(3ab)$ yields   
\be
\label{eq:bu_kappa1_vf_cm3D_short}
\begin{array}{lcl}
r'&=& r\left[\epsilon(b-\xi)+3abY\right]=:f_1(r,\epsilon,Y),\\
\epsilon'&=&-\epsilon\left[\epsilon(b-\xi)+3abY\right]=:f_2(r,\epsilon,Y)\\
Y'&=& f_3(r,\epsilon,Y),\\
\end{array}
\ee 
where the function $f_3$ is given by
\benn
f_3(r,\epsilon,Y):=\kappa\epsilon(1-[Y+1/(3ab)][1+a^*_1b^*_1])
-2[Y+1/(3ab)]\left(\epsilon(b-\xi)+3aby\right).
\eenn
Let $z:=(r,\epsilon,Y)^T$ and consider
\benn
A:=\left.D_z(z')\right|_{(0,0,0)}=
\left(
\begin{array}{ccc}
0 & 0 & 0 \\
0 & 0 & 0 \\
0 & K & -2 \\
\end{array}
\right)\qquad \text{and}\qquad M:=\left(
\begin{array}{ccc}
1 & 0 & 0 \\
0 & -2/K & 0 \\
0 & 1 & 1 \\
\end{array}
\right),
\eenn
where $K:=-(2 b + \kappa - 2 a b \kappa - 2 \xi)/(3 a b)$. Let 
$(x_1,x_2,\tilde{y})^T=\tilde{z}=M^{-1}z$ and observe that 
$M^{-1}AM=J\in\R^{3\times 3}$ with $J_{33}=-2$ and $J_{ij}=0$ otherwise. 
Set $\tilde{z}=(x_1,x_2,\tilde{y})=M^{-1}z$ so that
\be
\label{eq:sf_cm_theory}
\begin{array}{ccccc}
\left(\begin{array}{c}x_1'\\ x_2'\\\end{array}\right)&=&
\left(\begin{array}{cc}0 & 0\\ 0& 0\\\end{array}\right)
\left(\begin{array}{c}x_1\\ x_2\\\end{array}\right)&+& 
\left(\begin{array}{c}F_1(x_1,x_2,\tilde{y})\\ F_2(x_1,x_2,\tilde{y})\\\end{array}\right)\\
\tilde{y}'&=& -2\tilde{y}&+& G(x_1,x_2,\tilde{y}),\\ 
\end{array}                                                                                                                                        
\ee
where $(F_{1},F_2,G)^T=(0,0,2\tilde{y})^T+M^{-1}(f_1(M\tilde{z}),
f_2(M\tilde{z}),f_3(M\tilde{z}))^T$. The system \eqref{eq:sf_cm_theory} is in 
the standard form for center manifold theory \cite{GH}. The usual perturbation 
ansatz is $\tilde{y}=h(x_1,x_2)=k_{11}x_1^2+k_{12}x_1x_2+k_{22}x_2^2+\cO(3),$ 
where $\cO(3):=\cO(x_1^3,x_1^2x_2,x_1x_2^2,x_2^3)$. The defining invariance equation 
for the center manifold with $x=(x_1,x_2)^T$ and $F=(F_1,F_2)^T$ is  
\be
\label{eq:inv_eq_cm1}
Dh(x)F(x,h(x))=-2h(x)+G(x,h(x))
\ee
since the $x'$-equations in \eqref{eq:sf_cm_theory} have no linear term. Collecting 
terms of order $\cO(x_1^2)$ in \eqref{eq:inv_eq_cm1} gives $k_{11}=0$ and the 
$\cO(x_1x_2)$-terms give $k_{12}=0$. For $\cO(x_2^2)$ equation \eqref{eq:inv_eq_cm1} 
and $k_{11}=0=k_{12}$ imply
\benn
k_{22}=\frac{3ab(1+4ab)\kappa}{4(b-\xi)+2\kappa(1-2ab)}.
\eenn  
Transforming back to the variables $(r,\epsilon,y)$ via the matrix $M$ and translation 
yields the center manifold
\benn
y=\frac{1}{3ab}+\epsilon\frac{2(\xi-b)+\kappa(2ab-1)}{6ab}+k_{22}\frac{K^2}{4}\epsilon^2+\cO(3),
\eenn
where $\cO(3)=\cO(r^3,r^2\epsilon,r\epsilon^2,\epsilon^3)$. Computing 
\benn
k_{22}\frac{K^2}{4}=\frac{\kappa(1+4ab)}{24ab}(2(b-\xi)+\kappa(1-2ab))
\eenn
yields Proposition \ref{prop:twocases}.

\section{Another Auxiliary Center Manifold Reduction}
\label{ap:cm2}

As for the center manifold reduction in Appendix \ref{ap:cm1} we present some of the 
important details for the center manifold calculation. As before for the previous appendix, 
the notation here only pertains to this calculation and should not be confused with variables 
within the main text. It is convenient to translate \eqref{eq:Olsen1_bu} via 
$B_2:=b_2-\xi$, to re-label $x_2=X_2$, $y_2=Y_2$ and change to the time scale 
$\tau=s/\epsilon^2$ which yields
\be
\label{eq:Olsen1_bu_scale_translate}
\begin{array}{rcl}
\dot{X}_2&=& 3a_2(B_2+\xi)Y_2-X_2^2+B_2X_2+\delta,\\
\dot{a}_2&=& \epsilon^2(\mu-\alpha a_2 -a_2(B_2+\xi)Y_2),\\
\dot{B}_2&=& \epsilon^2\epsilon_b(1-(B_2+\xi)X_2 -a_2(B_2+\xi)Y_2),\\
\dot{\epsilon}&=& 0,\\
\dot{\delta}&=& 0,\\
\dot{Y}_2&=& \kappa(X_2^2-Y_2-a_2(B_2+\xi)Y_2).\\
\end{array}
\ee 
The system \eqref{eq:Olsen1_bu_scale_translate} has a line of equilibrium points 
\benn
\cE_2:=\{(X_2,a_2,B_2,\epsilon,\delta,Y_2)=(0,a_2,0,0,0,0)\},
\eenn
which is degenerate since the linearization of \eqref{eq:Olsen1_bu_scale_translate} 
at $\cE_2$ for fixed $a_2=a_0$ is
\be
\label{eq:lin_chart2_fine}
A=\left.D_{(X_2,a_2,b_2,\epsilon,\delta,Y_2)}
\left(\begin{array}{c}  \dot{X}_2 \\ \dot{a}_2 \\ \dot{B}_2 \\ 
\dot{\epsilon} \\ \dot{\delta} \\ \dot{Y}_2\\ \end{array}\right)
\right|_{\cE_2}=
\left(\begin{array}{cccccc}
0 & 0 & 0 & 0 & 1 & 3a_0\xi \\
0 & 0 & 0 & \mu-\alpha a_0 & 0 & 0\\
0 & 0 & 0 & \epsilon_b & 0 & 0\\
0 & 0 & 0 & 0 & 0 & 0\\
0 & 0 & 0 & 0 & 0 & 0\\
0 & 0 & 0 & 0 & 0 & -\kappa(1+a_0\xi)\\
      \end{array}
\right).
\ee
This matrix has one negative eigenvalue $-\kappa(1+a_0\xi)$ and a 
quintuple zero eigenvalue. Hence a center manifold reduction to a five-dimensional 
center flow is required to resolve the dynamics near $\cE_2$. However, we use a 
preliminary transformation to get the system into standard form. Let $Z=(X_2,Y_2)^T$ 
and set 
\benn
A_{XY}:=\left(
\begin{array}{cc}
0 & 3a_0\xi \\
0 & -\kappa(1+a_0\xi)  \\
\end{array}
\right)\qquad \text{and}
\qquad
M:=\left(
\begin{array}{cc}
1 & -\frac{3 a_0 \xi}{\kappa (1 + a_0 \xi)}  \\
0 & 1  \\
\end{array}
\right).
\eenn
Then consider new coordinates via $M\tilde{Z}=Z$ and observe that in the 
coordinates $(\tilde{X}_2,\tilde{Y}_2)^T$ we have
\benn
\tilde{Z}'=M^{-1}A_{XY}M\tilde{Z}+\text{h.o.t.}=\left(
\begin{array}{cc}
0 & 0 \\
0 & -\kappa(1+a_0\xi)  \\
\end{array}
\right)\left(\begin{array}{c}
\tilde{X}_2\\
\tilde{Y}_2 \\
\end{array}
\right)+\text{h.o.t.},
\eenn
where $\text{h.o.t.}$ denotes higher-order terms. Let
$x:=(x_1,x_2,x_3,x_4,x_5)=(\tilde{X}_2,a_0-a_2,b_2,\epsilon,\delta)$ so
that $y=\tilde{Y}_2$ is the transformation of \eqref{eq:Olsen1_bu_scale_translate} 
into new coordinates
\benn
\begin{array}{lcl}
x_1'&=& x_5 + 3 x_3 y (x_3 + \xi) + 
x_3 \left(x_1 - \frac{3 a_0 y \xi}{\kappa + a_0 \kappa \xi} \right) 
- \left(x_1 - \frac{3 a_0 y \xi}{\kappa + a_0 \kappa \xi}\right)^2\\ 
&&- 3 a_0 \xi \frac{y + (a_0-x_2) y (x_3 + \xi) - 
\left(x_1 - \frac{3 a_0 y \xi}{\kappa + a_0 \kappa \xi}\right)^2}{1 + a_0 \xi},\\
x_2'&=& x_4 (\mu - (a_0-x_2) (\alpha + y (x_3 + \xi))),\\
x_3'&=& x_4 \epsilon_b (1 - (a_0-x_2) y (x_3 + \xi) - 
(x_3 + \xi) \left(x_1 - \frac{3 a_0 y \xi}{\kappa + a_0 \kappa \xi}\right),\\
x_4'&=&0,\\
x_5'&=&0,\\
y'&=& \kappa\left(-y - (a_0-x_2) y (x_3 + \xi) + 
\left(x_1 - \frac{3 a_0 y \xi}{\kappa + a_0 \kappa \xi}\right)^2\right),
\end{array}
\eenn
which is a vector field we denote by $(Cx+F(x,y),Py+G(x,y))^T$ for 
$F(x,y)\in\R^5$, $G(x,y)\in\R$. Observe that 
\benn
C=\{A_{ij}\}_{i,j=1}^5,\qquad \text{and}\qquad P= -\kappa(1+a_0\xi).
\eenn
The vector field is now in the correct form for center manifold theory, applied along 
the entire line of points parametrized by $a_0$. The ansatz is 
\benn
y=h(x)=\sum_{i+j=2,i\leq j}c_{ij}x_ix_j.
\eenn
The usual invariance equation is given by
\benn
Dh(x)[Cx+F(x,h(x))]=Ph(x)+G(x,h(x)),
\eenn
where different powers $x_ix_j$ have to have equal coefficients on both sides. 
This procedure yields
\benn
c_{11}=\frac{1}{1+a_0\xi},\qquad c_{15}=-\frac{1}{\kappa(1+a_0\xi)^2},
\qquad c_{55}=-\frac{1}{\kappa^2(1+a_0\xi)^3}.
\eenn 
All other coefficients $c_{ij}$ have vanish. Hence the center manifold is given 
to lowest order by
\be
\tilde{Y}_2=\frac{\tilde{X}_2^2}{1+a_0\xi}
-\frac{\tilde{X}_2\delta}{\kappa(1+a_0\xi)^2}-\frac{\delta^2}{\kappa^2(1+a_0\xi)^3}.
\ee
Transforming back to original coordinates and keeping lowest order terms yields
\be
\label{eq_cm_chart2}
Y_2=\frac{X_2^2}{1+a_0\xi}-\frac{\delta X_2}{\kappa(1+a_0\xi)^2}
+\frac{\delta^2}{\kappa^2(1+a_0\xi)^3}+\cO(Y_2^2,X_2^3,X_2Y_2,\delta Y_2,\delta^3).
\ee
Substituting the result into \eqref{eq:Olsen1_bu_scale_translate} gives, up 
to leading order, the center flow
\be
\label{eq:Olsen1_bu_scale_translate_CM}
\begin{array}{rcl}
\epsilon^2\frac{dX_2}{ds}&=& X_2^2\left(\frac{2a_0\xi-1}{1+a_0\xi}\right)
+X_2(\frac{-\delta}{\kappa(1+a_0\xi)^2}+B_2)
+\delta+\frac{\delta^2}{\kappa^2(1+a_0\xi)^3}+\cO(3),\\
\frac{da_2}{ds}&=& \mu-\alpha a_2+\cO(2),\\
\frac{dB_2}{ds}&=& \epsilon_b+\cO(2),\\
\end{array}
\ee 
which is precisely the result we wanted to prove.

\newpage

\newpage

\tableofcontents

\end{document}